\documentclass[pdflatex,sn-mathphys-num]{sn-jnl}% Math and Physical Sciences Numbered Reference Style
\usepackage{graphicx}%
\usepackage{multirow}%
\usepackage{amsmath,amssymb,amsfonts}%
\usepackage{amsthm}%
\usepackage{mathrsfs}%
\usepackage[title]{appendix}%
\usepackage{xcolor}%
\usepackage{textcomp}%
\usepackage{manyfoot}%
\usepackage{booktabs}%
\usepackage{algorithm}%
\usepackage{algorithmicx}%
\usepackage{algpseudocode}%
\usepackage{listings}%

\newtheorem{corollary}{Corollary}

\usepackage[utf8]{inputenc}
\usepackage{lmodern}
\usepackage[T1]{fontenc}
\usepackage{graphicx}
\usepackage{subcaption}
\graphicspath{{./}}
\usepackage{geometry}
\usepackage{csquotes}

\usepackage[myheadings]{fullpage}
\usepackage{float}

\newcommand{\ufin}[2]{u_{#1}^{(#2)}} 
\newcommand{\ufint}[2]{u_{#1}^{(#2)}(t)}

\usepackage{color}

\theoremstyle{thmstyleone}%
\newtheorem{theorem}{Theorem}%  meant for continuous numbers
\theoremstyle{thmstyletwo}%

\theoremstyle{thmstylethree}%
\newtheorem{definition}{Definition}%

\begin{document}

\title[Article Title]{Coagulation equations with particle emission}

%%=============================================================%%
%% GivenName	-> \fnm{Joergen W.}
%% Particle	-> \spfx{van der} -> surname prefix
%% FamilyName	-> \sur{Ploeg}
%% Suffix	-> \sfx{IV}
%% \author*[1,2]{\fnm{Joergen W.} \spfx{van der} \sur{Ploeg} 
%%  \sfx{IV}}\email{iauthor@gmail.com}
%%=============================================================%%

\author*[1]{\fnm{Joseph} \sur{Klobusicky}}\email{joseph.klobusicky@scranton.edu}

\author[1,2]{\fnm{Matthew} \sur{Rakauskas}}\email{matthew.rakauskas@scranton.edu}

\affil[1]{\orgdiv{Department of Mathematics}, \orgname{University of Scranton}, \orgaddress{\street{800 Linden Street}, \city{Scranton}, \postcode{18510}, \state{PA}, \country{USA}}}

\affil[2]{\orgdiv{Department of Computer Science}, \orgname{University of Scranton}, \orgaddress{\street{800 Linden Street}, \city{Scranton}, \postcode{18510}, \state{PA}, \country{USA}}}

%%==================================%%
%% Sample for unstructured abstract %%
%%==================================%%

\abstract{  We present a model  for  sticky particles  in which cluster sizes after
a reaction have  $\ell$ fewer total particles than the sum of  their reactants.
 The finite particle system is modeled as a Markov process under a mean-field
assumption for selecting reactants.  The limiting kinetic equations   form
an infinite system of nonlinear differential equations similar to the Smoluchowski
coagulation equations with multiplicative kernel.  We show existence and
uniqueness   for systems whose cluster sizes are either bounded above or
below by the emission size $\ell$. When clusters have at most $\ell$ particles,
well-posedness can be extended until an exhaustion time in which certain
cluster fractions vanish.  For clusters with more than $\ell$ particles,
we prove short-time well-posedness, along with explicit formulas for cluster
sizes and moments.  We also conduct numerical experiments which suggest these
formulas hold until a gelation time, at which an infinite-sized cluster forms.
 }

\keywords{coagulation equations, chemical kinetics, mean-field models, particle emission}

%%\pacs[JEL Classification]{D8, H51}

\pacs[MSC Classification]{82D30,60J05,37L55}

\maketitle

\section{Introduction}\label{sec1}

We consider a model of    aggregation for sticky particles which involves
the merging  of clusters and the  emission of a fixed positive number of
particles. Denoting  $S_n$ as a cluster of $n\in \mathbb
N_+$ particles and $\ell \in \mathbb N_+$ as the  \textit{emission size},
the reaction of two clusters is    
\begin{equation}
S_i + S_j \rightharpoonup S_{i+j-\ell}, \qquad i+j\ge \ell+1. \label{mainreact}
\end{equation}
The lower bound on the sum of cluster sizes imposes an activation requirement
that each reaction has a
nonempty product. The pure aggregation case of  $\ell = 0$ has been  well-studied,
both for finite-particle stochastic systems \cite{lushnikov1978some,marcus1968stochastic}
and  limiting mean-field coagulation equations \cite{smoluchowski1916drei,mcleod1962infinite,aldous1999deterministic}.
The main object of interest in these studies is the growth of cluster sizes
under various reaction rates.  Closely related are  polymerization processes,
where coagulation reactions vary depending on available reactive sites for
 polymer structures (\cite{ziff1980kinetics,nelson2020kinetic}).  While most
studies focus on reactions with a conservation of total particles,  several
 studies of systems  with mass loss  have been considered  \cite{wattis2004coagulation,singh1996coagulation,banasiak2006coagulation}.
Here, the collision process   (\ref{mainreact}) with $\ell = 0$ is paired
with pure loss
reactions $C_j \rightarrow \emptyset$ for each cluster size $j \ge 1$. 
 
Our motivation for considering a positive emission size in (\ref{mainreact})
comes from a problem in materials science for two-dimensional network microstructure.
Given a two-dimensional soap foam, we may coarsen the foam by rupturing edges.
 If we denote $C_n$ as a cell in the foam with $n$ sides, an edge's rupture
typically affects the side number its four neighboring cells $(C_i,C_j,C_k,C_l)$.
 The three topological reactions of the foam are
\begin{gather}
C_i + C_j \rightharpoonup C_{i+j-4},  \label{facemerge}\\ C_k \rightharpoonup
C_{k-1}, \qquad C_l \rightharpoonup C_{l-1}. \label{edgemerge}
\end{gather} 
In physical and numerical experiments of the rupturing process \cite{klobusicky2024planar,
klobusicky2021markov},  the distribution of cell sizes and areas for aged
foams were found to be highly heterogeneous, consisting  of   a small number
of massive cells interspersed among much smaller cells having few sides.
  Numerical simulations for  mean-field
models following (\ref{facemerge})-(\ref{edgemerge})  suggested the emergence
of a \textit{gel}
as the number of cells becomes infinite \cite{klobusicky2021markov}. In the
context of foams, this corresponds
to the appearance of a massive cell.
Our aim in studying (\ref{mainreact}) is to provide a rigorous analysis of
 mean-field models obeying
a generalization of the second-order reaction  (\ref{facemerge}), allowing
for    a general emission of $\ell \ge 1$ particles after each reaction.

In Section \ref{sec:markov} we construct a finite particle Markovian model
for (\ref{mainreact}) with a state space that tracks frequencies of each
cluster size.  Reactants are  sampled with order and no replacement,  and
selected in proportion to cluster sizes.  Each reaction for the finite-particle
model is a time step, and through scaling with the initial total cluster
number, cluster fractions can be represented as a continuous time cadlag
process. The time scale under this scaling is the extent of reaction, where
time denotes a ratio of total reactions to the initial cluster count. This
use of an `internal clock' is desirable for processes like foam rupture,
where rupture rates are often erratic and bursty \cite{vandewalle2001cascades}.
     In the infinite limit of total clusters, we show formally that fractions
satisfy an infinite system of differential equations which is the main object
of study for the remainder of the paper.  We note that a rigorous proof of
convergence for one and two-species mean-field models with particle loss
is established in
\cite{klobusicky2016concentration,klobusicky2023concentration} through exponential
concentration inequalities.

As opposed to pure aggregation models, positive emission allows for systems
whose cluster sizes remain bounded. In  Section \ref{sec:solns}, we distinguish
between these so-called small-cluster systems and their large-cluster counterparts,
where initial clusters are all larger than the emission size.
We prove basic  properties for both systems and state the main existence
and uniqueness results to be shown in later sections.  Section \ref{sec:reactsmall}
focuses on well-posedness for small clusters until an exhaustion time, at
which   certain cluster fractions approach zero.  This is demonstrated with
a numerical simulation of the Markov process on an example with three species.
 We  also  introduce the concept of reaction classes, which relates small
time growth of clusters to the required number of reactions needed to create
such clusters from initial conditions. Reactions classes are also used in
the well-posedness of large-cluster solutions, presented in Section \ref{sec:largeeu}.
 The equations for large clusters are an infinite system, and so a direct
appeal to well-known existence and uniqueness results  for ODEs is insufficient.
 We build on the  methods used by McLeod \cite{mcleod1962infinite} to show
existence of solutions to Smoluchowski's equations for a generalized class
of multiplicative collision kernels. Uniqueness is shown by constructing
an explicit iterative formula for cluster fractions.

In Sections  \ref{sec:mom}, we  derive a hierarchy of first-order linear
equations for moments of cluster fraction distributions. Explicit formulas
for the second and third moment are derived for monodisperse initial conditions.
We also show that, for $\ell = 1$ and initial conditions of size-two clusters
(dimers), solutions can be written as polynomials of the total mass function.
In Section \ref{sec:sims}, we run simulations of the Markov process for the
large-cluster system.   While solutions are only guaranteed to exist for
small times, we find numerically that cluster fractions and moments agree
until a blow-up of the second moment.  Past this gelation time, we find 
 gelation behavior, in which a single cluster comprises an increasingly large
fraction of total particles.

\section{The $N$-particle Markov model and limiting kinetic equations} \label{sec:markov}

In this section, we construct a  mean-field Markov process to model clusters
evolving under the reaction (\ref{mainreact}). We will work in a  state space
$U = (U_1, U_2, \dots) \in \ell_1(\mathbb
N_+)$ of cluster frequencies, where $U_n$ denotes the number of  $n$-clusters.
Given an initial state $U^{(N)}[0]$ with $N$ clusters, we first describe
a discrete-time Markov process $\{U^{(N)}[t]\}_{t \ge 0}$ , where $t \in
\mathbb N$ denotes the system after $t$ reactions.  
  Of particular import  is our choice of time scale.  Rather than defining
 collision times which are dependent on cluster frequencies,  we instead
impose that
a single reaction occurs at each time step in the discrete model. Thus, our
time scale is with respect to total reactions, or the \textit{extent of reaction}.

The number of particles which are contained  in $n$-clusters is then $nU_n$,
and the
total number of particles in the system is $\sum_{k\ge 1} kU_k$. For 
a single reaction (\ref{mainreact}), we select two clusters at random to
collide.  A natural model will choose clusters in proportion to their size,
so that an  $n$-cluster is selected with
probability 
\begin{equation}
p_{n}[U] = \frac{nU_n }{\sum_{k\ge 1} k U_k }. \label{selectprob}
\end{equation}
To transition from a state $U$ to $U'$, sample two clusters $S_i$ and  $S_j$
from
$U$ with order and  no replacement. Clusters are  selected with cluster selection
probabilities 
 (\ref{selectprob}), conditioned under the  product requirement of
$i+j \ge \ell+1$.
If the clusters selected have sizes $\sigma_1$ and $\sigma_2$, the transitioned
state updates cluster sizes with respect to (\ref{mainreact}) as 
\begin{equation}
U' = (U_1', U_2', \dots), \qquad U_i' =U_i - \mathbf 1_{i =
\sigma_1}- \mathbf 1_{i = \sigma_2}+ \mathbf 1_{i =
\sigma_1+\sigma_2- \ell}. 
 \label{disctrans}
\end{equation}

The discrete-time Markov chain can easily
be extended to a continuous-time cadlag jump
process by scaling time so that one
reaction occurs at each time  $t_k = k/N$ with time increment $\Delta t =
t_{k+1}
- t_k =  1/N$.  This gives a time scale which still represents the extent
of reaction, but is now normalized with respect to the initial  number of
clusters. We can then define normalized cluster fractions as 
\begin{equation}
u_i^{(N)}(t)
= U^{(N)}_i[k]/N, \qquad t \in [t_k, t_{k+1}). \label{cadlag}
\end{equation}

Our main interest  is in an analysis of a formal  law of large
numbers limit $ u_i^{(N)}(t) \rightarrow u_i(t)$ as $N \rightarrow
\infty$, given  convergence of initial conditions $u_i^{(N)}(0) \rightarrow
u_i^0$ where $u^0 = (u^0_1, u^0_2, \dots) \in \ell_1([0, \infty))$.  For
large $N$, effects of sampling without replacement
become negligible
for choosing clusters, so the limiting probability $p_{i,j}[u]$
of selecting two clusters $(S_i,S_j)$ with order and no replacement is given by
\begin{equation}
 p_{i,j}[u] = \frac{iju_i u_j
}{M } \mathbf{1}_{i+j-\ell\ge 1} \qquad \hbox{ with }M = \sum_{m+n-\ell \ge
1} mn u_mu_n. 
 \label{twoprobs}
\end{equation}
Because time is scaled with respect to total collisions, $\dot u_n$ is the
limiting expression of the expected change of $n$-clusters after a single
reaction. This expected value can be decomposed through   birth and death
terms as
\begin{equation}
        \dot u_n = A_n^+[u] - A_n^-[u]. \label{dotuflux}
\end{equation}
 The term $A_n^+[u]$ corresponds to the contribution of the expected value
from reactions which increase the number of $n$-clusters. The reactants for
the birth term take the form $(S_i,S_j),$ where  $i+j
= n+\ell$, $i \neq \ell$, and $j \neq \ell$. In all of these cases, (\ref{mainreact})
increases the number of $n$-clusters by one. Then, noting that $p_{i,j}$
is symmetric, we compute    
\begin{equation}
A_n^+[u] = \sum_{\substack{i+j = n +\ell\\i,j \neq \ell}}p_{i,j}[u]  = \begin{cases}-2p_{n,\ell}[u]+
\sum_{i = 1}^{n+\ell -1}p_{i,n-i+\ell}[u] & n \neq \ell, \ \\
-p_{\ell,\ell}[u]+
\sum_{i = 1}^{2\ell -1}p_{i,2\ell-i}[u] & n =  \ell. \\
\end{cases} \label{aplus}
\end{equation}

  The loss term $A_n^-[u]$  is the contribution of expected value for reactions
in which total $n$-clusters decrease.   Cluster sizes $(S_i,S_j)$ then take
one of the following three forms:
\begin{enumerate}
\item For reactants contained in $\{(S_{n},S_j):j \in \mathbb N_+ \backslash
\{n,\ell\} \} \cup \{(S_{i},S_n):i \in \mathbb N_+ \backslash \{n,\ell\}
\}.$  The reaction (\ref{mainreact}) decreases    $n$-clusters by 1.\item
For $ (S_{n},S_{n})$ with $n\neq \ell$, 
the reaction decreases $n$-clusters by 2.
\item For $ (S_{n},S_{n})$ with $n =  \ell$, the reaction decreases $n$-clusters
by 1.
 
\end{enumerate}
For $n \neq \ell$, we sum probabilities over reactions (1) and (2) to obtain
 \begin{equation}
A_n^-[u] =2p_{n,n}[u]+ \sum_{\substack{j =  (1+ \ell-n)
\vee 1\\j \neq \ell,n}}^{ \infty}2p_{n,j}[u] =  -2p_{n,\ell}[u]+
\sum_{i
=  (1+ \ell-n) \vee 1}^{ \infty}2p_{n,j}[u].  \label{aminus1}
\end{equation}
The maximum in the lower index of the infinite sum in $A^-_n[u]$ enforces
the nonempty product constraint. When $n
= \ell$, a similar calculation considering reactions (1) and (3) gives
\begin{equation}
 A_n^-[u] =-p_{\ell,\ell}[u]+
\sum_{i
=  1}^{ \infty}2p_{\ell,j}[u].  \label{aminus2}
\end{equation}

From (\ref{twoprobs})-(\ref{aminus2}), it follows that for all $n \ge 1$,
 the limiting kinetic
equations for cluster fractions may be written as
\begin{align}
\dot u_n = \sum_{i = 1}^{n+\ell -1} \frac{i(n+\ell-i)}{M}u_i u_{n+\ell
- i} - 2nu_n\sum_{i \ge  (1+ \ell-n) \vee 1} \frac{iu_i}{M}, \label{themainode}
\end{align}
with initial conditions satisfying 
\begin{equation}
u_n(0) = u_n^0\ge 0,  \qquad \sum_{i \ge 1} u_i^0 = 1.  \label{maininits}
\end{equation}

The remainder of this paper is devoted to an analysis of the deterministic
limiting equations (\ref{themainode})-(\ref{maininits}). The pre-limit Markov
process, however, will be simulated at several points to compare against
 numerical and exact solutions of (\ref{themainode})-(\ref{maininits}). 
The implementation for this process is straightforward, using basic sampling
functions.  We found that Python dictionaries serve as a useful data type
for simulations, as they enable us to restrict sampling to nonzero clusters
in an efficient manner. Reducing systems with a million initial clusters
by a decade takes a few seconds on a standard laptop.  We observe that the
variance for cluster fractions is negligibly small across different trials,
and so we plot fractions over a single run when comparing against limiting
quantities.

\section{Solutions} \label{sec:solns}

We now outline solution types of (\ref{themainode}) related to moments and
initial cluster
sizes,  and subsequently state corresponding well-posedness theorems.  The
$k$th moment of a solution is defined as
\begin{equation}
m_k(t) = \sum_{i \ge 1} i^k u_i(t).
\end{equation}
Moments provide summary statistics of cluster distributions.  The zeroth
moment $m_0(t)$ is a total cluster number.  From (\ref{maininits}), clusters
are normalized to the initial cluster count, and so  $m_0(0) = 1$.  The first
moment $m_1(t)$ corresponds to the
total particle number.  For a randomly selected monomer, the first and second
moments  are used in computing  the average size of the cluster containing
the monomer.  In particular, the  average size is the ratio\begin{equation}
\frac{m_2}{m_1} = \sum_{i\ge 1} i\frac{iu_i}{m_1}.
\end{equation}

A finite first moment  is a physically reasonable requirement for a
particle system, and as we shall see, a  finite second moment is crucial
for deriving  expected constant loss rates of total clusters
and particles. 
This motivates us to define the following solution types.

\begin{definition}
For $T>0$, a nonnegative solution $u(t)$ of (\ref{themainode}) is a \textbf{finite
mass solution}   
if $m_{1}(t) = \sum_{n \ge 1} iu_i(t)$ is uniformly convergent for $t \in
[0,T]$. If $m_{2}(t) = \sum_{n \ge 1} i^{2}u_i(t)$ is uniformly convergent
 in $t \in [0,T]$ it is a \textbf{pre-gelation} solution.  
\end{definition}

Since each reaction reduces total clusters by 1 and total particles by $\ell$,
we should expect the following explicit forms for the zeroth and first moments.
\begin{theorem} \label{momentloss}
Suppose $\ell \ge 1$.  If $u(t)$ is a finite mass solution for  $t \in [0,T],$
then
\begin{equation}
 m_0(t) = m_0(0) - t. \label{m0loss}
 \end{equation}If, additionally, $u(t)$ is a pre-gelation
solution, then
\begin{equation}
m_1(t) = m_1(0)-\ell t.  \label{m1loss}
\end{equation}
\end{theorem}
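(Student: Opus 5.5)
We differentiate the zeroth and first moments term by term, substitute the kinetic equations (\ref{themainode}), and reorganize the resulting double sums over unordered reactant pairs. This reproduces the combinatorial picture that each reaction removes one cluster and $\ell$ particles. Write $w_i = i u_i$ and $M = \sum_{i+j\ge \ell+1} w_i w_j$. The key identity is that, for any weight $\phi$,
\begin{equation*}
\sum_{n\ge1}\phi(n)\dot u_n = \frac{1}{M}\sum_{i+j\ge \ell+1} w_iw_j\,\bigl(\phi(i+j-\ell)-\phi(i)-\phi(j)\bigr),
\end{equation*}
provided the rearrangements are justified. To obtain it, reindex the gain term of (\ref{themainode}) by $(i,j)$ with $i+j=n+\ell$, $n\ge1$. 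This gives $\frac1M\sum_{i+j\ge\ell+1}w_iw_j\phi(i+j-\ell)$. The loss term $-\frac{2}{M}\sum_n \phi(n) w_n\sum_{j\ge (1+\ell-n)\vee1} w_j$ equals $-\frac1M\sum_{i+j\ge\ell+1}w_iw_j(\phi(i)+\phi(j))$ by symmetry. With $\phi\equiv1$ the bracket is $-1$, so $\dot m_0=-1$. With $\phi(n)=n$ it is $-\ell$, so $\dot m_1=-\ell$. Integrating then gives (\ref{m0loss}) and (\ref{m1loss}).

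The real content is justifying termwise differentiation and the Fubini-type rearrangements, and this is where the two hypotheses enter. First, $M>0$ must be bounded away from zero on $[0,T]$. This holds because $M\ge m_1^2-\sum_{i+j\le \ell}w_iw_j$, and I would argue continuity plus positivity of $M$ along the solution, which is needed for (\ref{themainode}) to make sense at all. For $m_0$, I would work with partial sums $\sum_{n\le K}u_n(t)-u_n(0)=\int_0^t\sum_{n\le K}\dot u_n$. The gain and loss contributions are each dominated by $\frac{C}{M}m_1^2$, since $\sum_n\sum_{i+j=n+\ell}w_iw_j\le m_1^2$ and $\sum_n w_n\sum_j w_j\le m_1^2$. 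Uniform convergence of $m_1$ on $[0,T]$ gives a uniform bound on $m_1$, and it also makes the tails uniformly small. So the truncated double sums converge uniformly in $t$ to the full ones, and we may pass $K\to\infty$ inside the integral. The discrepancy between the truncated gain and loss consists of pairs $(i,j)$ with $i,j\le K$ but $i+j-\ell>K$, plus pairs with one index beyond $K$. These are bounded by $\frac{2}{M}m_1\sum_{i>K/2}w_i\to0$ uniformly. This yields (\ref{m0loss}).

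For $m_1$, the same truncation argument with $\phi(n)=n$ produces terms like $\frac1M\sum w_iw_j(i+j)$, which are controlled by $\frac{2}{M}m_1m_2$. The boundary error from pairs whose product falls beyond $K$ is $\frac1M\sum_{i+j-\ell>K,\,i,j\le K}w_iw_j(i+j-\ell)\lesssim \frac{1}{M}m_1\sum_{i>K/2-\ell}i\,w_i$, which is a tail of $m_2$. This step is exactly where uniform convergence of $m_2$ is required, and I expect it to be the main obstacle. Without it, mass can escape to infinity (gelation), and the loss term no longer balances the gain term. Once the tails of $m_2$ vanish uniformly on $[0,T]$, the limit $K\to\infty$ is justified and $m_1(t)=m_1(0)-\ell t$ follows. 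I would also check the edge case $n=\ell$ in (\ref{themainode}), but the formula there is the same, so the reindexing goes through unchanged.
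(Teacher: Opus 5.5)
Your proposal is correct and follows essentially the paper's route: substitute the kinetic equations, reindex the gain term over reacting pairs with $i+j=n+\ell$ so that it reproduces $M$ (and, for the first moment, the loss term minus $\ell M$), and justify the interchange of sums and derivatives by uniform convergence of $m_1$, respectively $m_2$. Your version integrates partial sums, bounds the truncation error by tails of $m_1$ or $m_2$, and states explicitly that $M$ must stay bounded away from zero on $[0,T]$. It is a slightly more careful rendering of the paper's termwise-differentiation argument, which uses that lower bound on $M$ without stating it.
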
 

\begin{proof}
We sum    equations in (\ref{themainode}) over $n \ge 1$ to obtain
\begin{align}
        \sum_{n \ge 1} \dot u_n &=\frac{1}{M}\left[
\sum_{n \ge 1}\left(\sum_{i=1}^{n+\ell-1} i(n-i+\ell)u_iu_{n-i+\ell}
- 2nu_n\sum_{i \ge  (1+ \ell-n) \vee 1} iu_i\right)
\right]\label{suminfinite} \\
&=\frac{1}{M}\left[
\sum_{n \ge 1}\left(\sum_{i=1}^{n+\ell-1} i(n-i+\ell)u_iu_{n-i+\ell}\right)
- 2\sum_{n+i >\ell}  inu_{i}u_{n}
\right] \\
        &:= \frac{ 1}{M}(Q-2M).
\end{align}

Let us show that $M(t)$ is a uniformly convergent series. Writing $M(t) =
\sum_{n\ge 1} a_n$, note that 
\begin{align}
a_n = nu_n\sum_{i \ge  (1+ \ell-n) \vee 1} iu_i \le nu_n \sum_{i \ge 1} iu_i
 = m_1(t)nu_n. 
\end{align}
And so 
\begin{equation}
M(t) \le \sum_{n \ge 1} m_1(t) nu_n,
\end{equation}
which is a uniformly convergent series by assumption and bounded by $m_1(t)^2$.
By the uniform comparison test, it follows that $M$ is a uniformly convergent
series as well.

By changing the order of summation and reindexing, $Q$ can be rewritten
as
\begin{align}
         Q&=  \sum_{n =1}^\infty\sum_{i=1}^{n+\ell-1} i(n-i+\ell)u_iu_{n-i+\ell}
 \\
         &=\left(\sum_{i =1}^{ \ell}\sum_{n=1}^{\infty}+\sum_{i = \ell+1}^{\infty}\sum_{n
=1+i-\ell}^{\infty}\right)
\left[i(n-i+\ell)u_iu_{n-i+\ell}\right] \\
         &=\left(\sum_{i =1}^{ \ell}\sum_{k =1-i+\ell}^{\infty}+\sum_{i =
\ell+1}^{\infty}\sum_{k =1}^{\infty}\right)
\left[ki
u_{i}u_{k}\right] \quad \hbox{ } (\hbox{with }k = n-i+\ell)\\
         &=\sum_{i+k>\ell
}
ki
u_{i}u_{k} = M.
\end{align}

We have shown that $Q$ is  a rearrangement of nonnegative terms
in $M$, so it is also uniformly convergent.  Thus the series $\sum_{n\ge
1} \dot u_{i}(t)$ is  uniformly convergent, and so we may swap the derivative
and
summation operations to obtain $\dot m_0 = \sum_{n \ge 1} \dot u_n = -1$,
establishing   (\ref{m0loss}).

From multiplying $\dot
u_n$ by $n$ and summing over $n\ge 1$ in (\ref{themainode}),
we obtain 
\begin{align}
        \dot  \sum_{n \ge 1} n\dot u_n &=\frac{1}{M}\left[
\sum_{n =1}^\infty\sum_{i=1}^{n+\ell-1} ni(n-i+\ell)u_iu_{n-i+\ell}
- 2\sum_{n+i >\ell}  in^2u_{i}u_{n}
\right] \\
        &:= \frac{ 1}{M}(R_1-R_2).
\end{align}
The uniform comparison test can be applied to show that $R_2$ is uniformly
convergent for pre-gelation solutions.
 The
same reindexing used for the zeroth moment calculations shows that 
\begin{align}
         R_1 &=  \sum_{n =1}^\infty\sum_{i=1}^{n+\ell-1} ni(n-i+\ell)u_iu_{n-i+\ell}
 \\
         &=\left(\sum_{i =1}^{ \ell}\sum_{n=1}^{\infty}+\sum_{i = \ell+1}^{\infty}\sum_{n
=1+i-\ell}^{\infty}\right)
\left[ni(n-i+\ell)u_iu_{n-i+\ell}\right] \\
         &= \sum_{i+k>\ell
}
(k^{2}+ki^{2}-\ell ik)
u_{i}u_{k}\\
 &=2\sum_{i+k>\ell
}
k^{2}i
u_{i}u_{k}-\ell\sum_{i+k>\ell
} ki
u_{i}u_{k} \\
&=  R_2-\ell M.
\end{align}
For a pre-gelation solution, it follows $R_1$ is also uniformly convergent,
and thus  $\dot m_1(t)   = \sum_{n\ge1} n \dot u_n =   -\ell$ establishing
 (\ref{m1loss}).  Note that this
calculation depends on the cancellation of $R_2$ which may be infinite
when  $m_2(t)= \infty$, and thus we  require pre-gelation solutions in order
 for this subtraction to be valid.
\end{proof}

For coagulation equations with no particle emission, $m_1$ remains constant
for all times if $m_2$ is finite.  In some cases there may exist a finite
\textit{gelation time} $ t_{\mathrm{gel}}\ge 0$ in which  $m_2(t) \rightarrow
\infty$ as $t \rightarrow t_{\mathrm{gel}}$. Depending on the model for post-gelation
regimes,  $m_1$ may begin to decrease, which is interpreted as particles
contributing to an infinite-sized \textit{gel}.  In our case, gel formation
would occur when $m_2(t_{\mathrm{gel}}) = \infty$, and  we should expect
that $m_1(t) <m_1(0)- \ell t$ in a post-gelation regime.  Numerical evidence
of this phenomenon is presented in Section \ref{sec:sims}.

One major departure from previously studied coagulation equations is that
particle emission allows for solutions with bounded cluster sizes.  Indeed,
if reactants have size at most $\ell$, then the product size will also be
bounded by $\ell$.  
\begin{definition}
We refer to a cluster $S_n$  as \textbf{small} if  $n \le
\ell$, and   \textbf{large} if $n>\ell$. Similarly, we call the non-negative
initial conditions in (\ref{maininits}) \textbf{small} if $u_i^0 = 0$ for
$i > \ell$, and $\textbf{large}$ if   $u_i^0 = 0$ for $i \le \ell$.
\end{definition}
A key observation is that (\ref{mainreact}) can neither produce large clusters
from small clusters, nor small clusters from large clusters. This would suggest
that solutions with small  initial conditions have no ``spontaneous generation'',
meaning $u_i(t) \equiv 0$ for $i>\ell$.  Similarly, we should expect $u_i(t)
\equiv 0$ for $i \le \ell$ for large initial conditions.    For finite mass
solutions, it can be shown that the total number of both
small clusters $m_{0;\mathrm{s}}
= \sum_{i \le \ell} u_i$ and big clusters $m_{0;\mathrm{b}} = \sum_{i
> \ell} u_i$ are nonincreasing in time.  We also note that the total number
of particles for
 big clusters $m_{1;\mathrm{b}} = \sum_{i
> \ell} iu_i$ is also nonincreasing, although this is not necessarily true
for
small clusters  $m_{1;\mathrm{s}} = \sum_{i
\le \ell} iu_i$.  For instance, when $\ell = 3$ the reaction $S_1+S_4 \rightharpoonup
S_2$ increases the total particles in small clusters by 1.

\begin{theorem} \label{bigsmallgrow} (Growth of small and large clusters).
Let $\ell \ge 1. $  For a finite mass solution $u(t)$ on $t \in [0,T]$, 
\begin{align*}\dot m_{0;\mathrm{s}} &= -\frac{1}{M} \left(\sum_{k \le
\ell} ku_k\right )^2-\frac{2}{M}
\left(\sum_{k
\ge 2\ell}
\sum_{j = 1}^{\ell} +\sum_{k = \ell+1}^{2\ell-1}\sum_{j
= 2\ell-k+1}^{\ell}\right)[kju_ku_j] \le 0, \\
\dot m_{0;\mathrm{b}} &= -\frac{1}{M} \left(\sum_{k
>
\ell} ku_k\right )^2- \frac{2}M\sum_{k=
\ell+1}^{2\ell-1} \sum_{j = 1}^{2\ell-k} kju_ku_j \le 0. \\
\end{align*}

Additionally, if $u(t)$ is  a pre-gelation solution,
\begin{align*}
\dot m_{1;\mathrm{s}} &=  -\frac{\ell}M\left(\sum_{k
\le \ell} ku_k\right )^2-
\frac{2}M\sum_{k = \ell+1}^{2\ell-1}\left( \sum_{j = 1}^{2\ell-k} (\ell-k)
kju_ku_j+\sum_{j
= 2\ell-k+1}^{\ell} kj^{2}u_ku_j\right)\\&\qquad-\frac{2}{M}\sum_{k
\ge 2\ell}
\sum_{j = 1}^{\ell} kj^{2}u_ku_j, \\
\dot m_{1;\mathrm{b}} &=  -\frac{\ell}M\left(\sum_{k
> \ell} ku_k\right )^2-
\frac{2}M\sum_{k = \ell+1}^{2\ell-1}\left( \sum_{j = 1}^{2\ell-k} k^2ju_ku_j+\sum_{j
= 2\ell-k+1}^{\ell}(\ell-j) kju_ku_j\right)\\&\qquad -\frac{2}{M}\sum_{k
\ge 2\ell}
\sum_{j = 1}^{\ell} (\ell-j)kju_ku_j \le 0.  
\end{align*}
 
\end{theorem}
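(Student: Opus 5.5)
The natural approach is the one used for Theorem \ref{momentloss}: sum (\ref{themainode}) with weight $\phi(n)\mathbf 1_{n\le \ell}$ or $\phi(n)\mathbf 1_{n>\ell}$, where $\phi(n)=1$ or $\phi(n)=n$. We justify exchanging derivative and sum by uniform convergence, then reindex the gain term as a sum over reactant pairs. The cleanest bookkeeping treats $\dot u_n$ as an expectation over ordered reactant pairs $(k,j)$ with $k+j>\ell$, each weighted by $kju_ku_j/M$. For a weight $\psi$ supported on small or on large sizes, this gives
\begin{equation*}
\sum_n \psi(n)\dot u_n=\frac1M\sum_{k+j>\ell}kju_ku_j\bigl[\psi(k+j-\ell)-\psi(k)-\psi(j)\bigr].
\end{equation*}
This identity follows from the same change of summation ($k=n-i+\ell$) used to show $Q=M$ and $R_1=R_2-\ell M$. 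Since the weights are nonnegative and each bracket is bounded by a constant times $k+j$ (for $\psi=1$) or $k^2+j^2$ (for $\psi=n$), the uniform comparison test applies under the finite mass, respectively pre-gelation, hypothesis. This makes the interchange legitimate exactly as in Theorem \ref{momentloss}.

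Next I split the index set $\{k+j>\ell\}$ by the types of $k,j$ and of the product $p=k+j-\ell$. If both $k,j\le\ell$, then $p\le\ell$ is small. If both are large, $p>\ell$ is large. In the mixed case, with $k>\ell$ and $j\le\ell$, the product is small iff $k+j\le 2\ell$. That happens only for $k\in[\ell+1,2\ell-1]$ and $j\le 2\ell-k$. Otherwise the product is large, which covers $k\ge 2\ell$ (all $j\le\ell$) and $k\in[\ell+1,2\ell-1]$ with $j\ge 2\ell-k+1$. This produces exactly the ranges appearing in the statement, and the factor $2$ on mixed terms comes from ordered pairs $(k,j)$ and $(j,k)$.

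Evaluating the bracket case by case:
\begin{itemize}
\item For $\psi=\mathbf 1_{\le\ell}$, small–small pairs give $-1$ and large–large give $0$. Mixed pairs with a small product give $0$, and those with a large product give $-1$. Summing the small–small block over all pairs with $k+j>\ell$ needs care, since the full square $(\sum_{k\le\ell}ku_k)^2$ also includes $k+j\le\ell$ pairs. I would check whether the statement implicitly absorbs these or whether they must be tracked. This is the main obstacle: reconciling the square with the constraint $k+j>\ell$, possibly by conditioning, and verifying the large-cluster case where the constraint is automatic.
\item For $\psi=\mathbf 1_{>\ell}$, large–large pairs give $-1$, yielding the square (here $k+j>\ell$ holds automatically). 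Mixed pairs with a small product give $-1$, and those with a large product give $0$.
\item For $\psi=n\mathbf 1_{\le\ell}$, small–small pairs give $-\ell$. Mixed pairs with a small product give $(k+j-\ell)-j=k-\ell$, and those with a large product give $-j$. Multiplying by $kju_ku_j$ recovers the $(\ell-k)$ and $j^2$ terms, up to sign conventions I must align with the statement.
\item For $\psi=n\mathbf 1_{>\ell}$, large–large pairs give $-\ell$. Mixed pairs with a small product give $-k$, and those with a large product give $j-\ell$.
\end{itemize}

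Finally, I read off the signs. The $m_{0;\mathrm b}$ and $m_{1;\mathrm b}$ expressions are manifestly nonpositive since $\ell-j\ge0$, as is the $m_{0;\mathrm s}$ expression. For $m_{1;\mathrm s}$ no sign is claimed, which is consistent with the term $\ell-k<0$.
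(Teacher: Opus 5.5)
\textbf{Verdict: your method is right, but the step you leave open cannot be closed, because the statement itself is wrong at that point.}

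Your approach is the paper's own. Its Table~\ref{bigsmalltable} is your bracket $\psi(k+j-\ell)-\psi(k)-\psi(j)$ evaluated over the five reactant types, and the rigorous version it invokes is the reindexing from Theorem~\ref{momentloss}. Every case value you compute is correct. This is not a missing idea on your side.

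The printed formulas for $\dot m_{0;\mathrm s}$ and $\dot m_{1;\mathrm s}$ are false whenever $\ell\ge 2$ and some small pair with $k+j\le\ell$ has $u_ku_j>0$. The table row ``$1\le k,j\le\ell$'' silently counts these non-reacting pairs. Your identity actually proves
\begin{equation*}
\dot m_{0;\mathrm s}=-\frac1M\sum_{\substack{j,k\le\ell\\ j+k>\ell}}kju_ku_j-\frac2M\Bigl(\sum_{k\ge2\ell}\sum_{j=1}^{\ell}+\sum_{k=\ell+1}^{2\ell-1}\sum_{j=2\ell-k+1}^{\ell}\Bigr)kju_ku_j .
\end{equation*}
The same restricted sum must replace $(\sum_{k\le\ell}ku_k)^2$ in $\dot m_{1;\mathrm s}$, keeping the factor $\ell$. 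The large-cluster formulas are correct as printed, since $k+j>\ell$ holds automatically there.

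Two checks show the printed version is false:
\begin{itemize}
\item Adding the stated $\dot m_{0;\mathrm s}$ and $\dot m_{0;\mathrm b}$ gives $-1-\frac1M\sum_{j,k\le\ell,\,j+k\le\ell}kju_ku_j$. This contradicts $\dot m_0=-1$ from Theorem~\ref{momentloss}, and the first moments likewise contradict $\dot m_1=-\ell$.
\item In the three-species example ($\ell=3$), $(u_1+2u_2+3u_3)^2=M+u_1^2+4u_1u_2$. So the statement predicts $\dot m_{0;\mathrm s}=-1-(u_1^2+4u_1u_2)/M$, whereas (\ref{threespec}) gives $\dot u_1+\dot u_2+\dot u_3=-1$.
\end{itemize}
The two versions coincide only when no such pairs are present, for example when $\ell=1$. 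So prove the corrected identities rather than trying to absorb the extra terms or condition them away. All the claimed inequalities survive, because the restricted sum is still nonnegative.

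A smaller point: your domination bounds are off by one power.
\begin{itemize}
\item For indicator weights the bracket is bounded by $2$, so summands are dominated by $2kju_ku_j$; that is why finite mass suffices. A bracket of size $k+j$ would already require $m_2$.
\item For the small-cluster weights the sum over $n$ is finite, so no interchange is needed at all.
\item For $\psi(n)=n\mathbf 1_{n>\ell}$ the bracket after cancellation is bounded by $2\ell$. The interchange, however, must be justified on the partial sums $\sum_{n\le N}\psi(n)\dot u_n$. These contain pairs with $k,j\le N<k+j-\ell$ weighted by up to $k+j$. Such flux terms vanish uniformly only because $\sum(k+j)kju_ku_j=2m_1m_2$ converges uniformly, and this is exactly where the pre-gelation hypothesis is used.
\end{itemize}
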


\begin{proof}

Formally, the rates of growth for the zeroth and first  moments restricted
to large and small clusters (denoted with subscripts of ``b'' and ``s'',
respectively) can be found by computing the expected
 change of total clusters $\Delta C_\mathrm b, \Delta C_\mathrm s $ and particles
$\Delta P_\mathrm b, \Delta P_\mathrm s$ after a single
reaction.  These changes can be broken into five cases when
considering the size of reactants $j$ and $k$.  See Table \ref{bigsmalltable}
for the values
of $\Delta C$ and $\Delta P$ under each of these
cases. Theorem \ref{bigsmallgrow} is then shown by summing probabilities
of each case occurring multiplied by the contribution of cluster sizes for
each case.   Rigorously, these expressions can be found by summing $\{\dot
u_i\}_{i>\ell}$
and $\{i \dot u_i\}_{i >\ell}$ in $(\ref{themainode})$ and reindexing, similar
to the proof in Theorem (\ref{momentloss}).  As with establishing (\ref{m1loss}),
the  sums involved for the total particle count require that $m_2(t)<\infty$
in order to be valid.  \end{proof}

\begin{table}
\begin{tabular}{|c|c|c|c|c|}\hline
\textbf{Reactant sizes} & $\Delta C_\mathrm s$ & $\Delta P_\mathrm s$ & $\Delta
C_\mathrm{b}$ & $\Delta P_\mathrm{b}$ \\\hline
$\ell+1  \le  k \le 2\ell-1$ and  $1\le j \le  2\ell-k$ & $0$ & $k-\ell$
& $-1$ & $-k$ \\\hline
$\ell+1  \le  k \le 2\ell-1$ and  $2\ell-k+1\le j\le \ell$ & $-1$ & $-j$
& 0 & $j-\ell$ \\\hline
$k \ge 2\ell$, and $1\le j \le \ell$ & $-1$ & $-j$ & 0 & $j-\ell$ \\\hline
$1 \le k,j \le \ell$ & $-1$ & $-\ell$ & 0 & 0 \\\hline
$k,j \ge \ell+1$ & 0 & 0 & $-1$ & $-\ell$ \\\hline
\end{tabular}
\caption{Growth of cluster sizes with respect to reactant sizes $j$ and $k
 $. For the first three cases, we assume $j<k$. These indices may be swapped
to produce the same changes in total clusters and particles. This is reflected
in the factors of 2 in Theorem \ref{bigsmallgrow}.}
\label{bigsmalltable}
\end{table}

\begin{theorem} \label{nosg} (No spontaneous generation).
For a finite mass solution $u(t)$ on  $t \in [0,T]$,

(i) if $u$ has small initial conditions, then $u_i(t) \equiv 0$ for $i>\ell$.

(ii) if $u$ has large initial conditions, then $u_i(t) \equiv 0$ for $i\le
\ell$.
  
\end{theorem}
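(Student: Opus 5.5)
The plan is to use Theorem~\ref{bigsmallgrow} together with nonnegativity of solutions. The quantities $m_{0;\mathrm{s}}$ and $m_{0;\mathrm{b}}$ are nonnegative and, by Theorem~\ref{bigsmallgrow}, nonincreasing for any finite mass solution. They start at zero in the relevant cases: $m_{0;\mathrm{b}}(0)=\sum_{i>\ell}u_i^0=0$ for small initial conditions, and $m_{0;\mathrm{s}}(0)=\sum_{i\le \ell}u_i^0=0$ for large initial conditions. If such a quantity is nonnegative, nonincreasing, and zero at $t=0$, it vanishes identically on $[0,T]$. Since each $u_i(t)\ge 0$, every term of the corresponding sum then vanishes, which gives (i) and (ii).

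The key steps, in order, are as follows.

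First, I justify that $m_{0;\mathrm{s}}$ and $m_{0;\mathrm{b}}$ are differentiable with the derivatives given in Theorem~\ref{bigsmallgrow}. For a finite mass solution, $\sum_n \dot u_n$ converges uniformly; this is the argument in the proof of Theorem~\ref{momentloss}, where the gain and loss sums are shown to be rearrangements of the uniformly convergent series $M$. The same domination applies to the subseries over $n>\ell$ and $n\le \ell$, since $m_{0;\mathrm{s}}$ is a finite sum anyway. So termwise differentiation is valid.

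Second, I observe that every term in the displayed expressions for $\dot m_{0;\mathrm{s}}$ and $\dot m_{0;\mathrm{b}}$ is a product of nonnegative quantities with a minus sign. This needs $M>0$, which holds as long as some admissible pair of clusters exists, since $M$ is the normalizing total rate; otherwise the equations are not defined. Hence both derivatives are $\le 0$.

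Third, I integrate: $0\le m_{0;\mathrm{b}}(t)\le m_{0;\mathrm{b}}(0)=0$ in case (i), and the analogous chain for $m_{0;\mathrm{s}}$ in case (ii).

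The main obstacle is the first step: making the interchange of derivative and sum rigorous for the restricted sums, rather than just the full sum treated in Theorem~\ref{momentloss}. I would handle it as that proof does. For $n>\ell$, the loss terms $2nu_n\sum_i iu_i/M$ are bounded by $2m_1(t)\,nu_n/M$, and the gain terms form a subseries of the nonnegative rearrangement $Q=M$; both converge uniformly by the Weierstrass comparison test. An alternative that avoids Theorem~\ref{bigsmallgrow} for part (i) is a direct argument. For $n>\ell$, a gain term $u_iu_{n+\ell-i}$ requires $i>\ell$ or $n+\ell-i>\ell$, so each gain term carries a factor from a large cluster. One can then write $\dot m_{0;\mathrm{b}}\le C\,m_{0;\mathrm{b}}$ with $C$ bounded via $m_1$ and a positive lower bound on $M$ on $[0,T]$, and conclude by Gronwall. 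I would keep this as a backup if the sign computation from Theorem~\ref{bigsmallgrow} is questioned.
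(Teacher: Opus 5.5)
Your main argument is the same as the paper's: in each case the relevant count ($m_{0;\mathrm b}$ for (i), $m_{0;\mathrm s}$ for (ii)) is zero at $t=0$, nonincreasing on $[0,T]$ by Theorem~\ref{bigsmallgrow}, and nonnegative, so it vanishes identically, and nonnegativity of each $u_i$ finishes the proof. Your Gronwall backup does not close as stated, though: summed over $n>\ell$, the gain terms are only bounded by $\frac{2m_1}{M}\sum_{i>\ell} iu_i = \frac{2m_1}{M}\,m_{1;\mathrm b}$, a size-weighted quantity that cannot be bounded by $C\,m_{0;\mathrm b}$ without an a priori bound on cluster sizes, so the proof must rest on the exact sign computation.
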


\begin{proof}
(i) For small initial conditions, $m_{0;\mathrm b}(0) = 0$ and $\dot m_{0;\mathrm
b}(0) \le  0$ from  Theorem \ref{bigsmallgrow}. Since solutions are nonnegative,
this implies $u_i(t) \equiv 0$ for $i>\ell$. The proof for part (ii) is similar.
\end{proof}

Theorem \ref{nosg} shows that small or large initial conditions remain so
for positive times, and so for these cases we can restrict our well-posedness
analysis to a smaller system of equations.  For small initial conditions,
(\ref{themainode})
becomes finite.  For $1 \le n \le \ell$,  
\begin{align}
\dot u_n = \sum_{i = n}^{\ell} \frac{i(n+\ell-i)}{M}u_i u_{n+\ell
- i} - 2nu_n\sum_{i =  1+ \ell-n  }^{\ell} \frac{iu_i}{M},  \label{mainsmall}
\end{align}
with 
\begin{equation}
M = \sum_{i = 1}^\ell \sum_{j = 1+\ell-i}^\ell iju_iu_j.
\end{equation}

For large initial conditions, there are no restrictions on which clusters
can
interact, since products always have at least $\ell+2$ particles. The total
probability then simplifies as 
\begin{equation}
M = \sum_{i,j>\ell} iju_iu_j =m_1^2. \qquad 
\end{equation}
 For $n> \ell$, the large-cluster equations are
\begin{align}
\dot u_n &= \sum_{i =
\ell+1}^{n-1} \frac{i(n+\ell-i)}{m_1(t)^2}u_i u_{n+\ell
- i} - \frac{2nu_n}{m_1(t)}, \qquad  \label{realmaineqn}
\quad  
\end{align}  
where the sum in (\ref{realmaineqn}) is understood to vanish when $n = \ell+1$.
 For both (\ref{mainsmall}) and (\ref{realmaineqn}), we keep initial condition
requirements (\ref{maininits}).

Under a time reparametrization, (\ref{realmaineqn})
takes a form of coagulation equations with multiplicative reaction rates.
 This reparametrization is given by  ``$q$-coordinates'' $q_k(t) = u_k(t)/m_1(t)$.
 In the pre-gelation regime, $\dot m_{1} = -\ell$  and
(\ref{realmaineqn}) becomes
\begin{equation}
m_1 \dot q_n  
 = \sum_{i = \ell+1}^{n-1} i(n+\ell-i)q_i q_{n+\ell - i}- (2n-\ell)q_n. 
\label{qness}
\end{equation}
Under this rescaling, total particle numbers $\{kq_k(t)\}_{k \ge 1}$ are
a probability measure at each time $t$, as opposed to the original total
particle fractions $\{ku_k(t)\}_{k\ge 1}$ which  are scaled with respect
to  initial total particles and are  a sub-probability measure. In the  case
when $\ell = 0$, total particles are conserved, meaning $m_1 \equiv 1 $ and
(\ref{realmaineqn}) is equal to the  Smoluchowski coagulation equations with
multiplicative reaction rates of $2nk$ between $k$ and $n$ clusters.  Since
the change of variables is a time reparametrization, solutions   $\{q_k(t)\}_{k\ge
1}$ and  $\{u_k(t)\}_{k\ge 1}$ have equivalent trajectories \cite[Section
1.4.1]{chicone2006ordinary}.

\begin{definition}
For  $T>0$,  we call a nonnegative solution $u_\mathrm{s}(t) = (u_1(t), \dots,
u_{\ell}(t))
$ of (\ref{mainsmall}) and (\ref{maininits}) a \textbf{small-cluster solution}
if $u_{k}(t) \in C^1([0,T], [0, 1])$ for $k \le \ell$.
.  We call a nonnegative solution
$u_b(t) = (u_{\ell+1}(t), u_{\ell+2}(t), \dots ) $  of (\ref{realmaineqn})
and (\ref{maininits})
  a \textbf{large-cluster solution} if
$u_{k}(t) \in C^1([0,T], [0, 1])$ for $k \ge \ell+1$.
\end{definition}

Our well-posedness theorem for  small clusters guarantees a unique solution
up to an exhaustion time
\begin{equation}
t_{\mathrm{ex}}
:= \inf \{s\ge 0: M(s^-)= 0\},
\end{equation}
 when some (but not necessarily all) clusters
vanish. 

\begin{theorem} \label{smalleuthm}
For small initial conditions, the system  (\ref{mainsmall})
has a unique small-cluster solution on $[0,t_{\mathrm{ex}})$.  This solution
is analytic.
\end{theorem}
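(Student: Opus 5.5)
The system (\ref{mainsmall}) is a finite system of $\ell$ ordinary differential equations whose right-hand side is a rational function of $(u_1,\dots,u_\ell)$. The numerator is a quadratic polynomial and the denominator is the quadratic form $M=\sum_{i=1}^\ell\sum_{j=1+\ell-i}^\ell iju_iu_j$. I therefore plan to use the standard Cauchy--Kovalevskaya/Picard--Lindelöf theory on the open set
\[
D=\{u\in\mathbb R^\ell : M(u)>0\}.
\]
On $D$ the vector field $F(u)$ is real analytic, hence locally Lipschitz. By Picard--Lindelöf there is a unique maximal $C^1$ solution starting from $u^0$. By the Cauchy--Kovalevskaya theorem for ODEs (analytic vector fields give analytic flows), this solution is real analytic in $t$ on its interval of existence.

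First I must check that $u^0\in D$, i.e. $M(u^0)>0$. If instead $M(u^0)=0$, then $t_{\mathrm{ex}}=0$ and the statement is vacuous, so I may assume $M(u^0)>0$.

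The main work is to show the following two things.
\begin{itemize}
\item[(a)] The solution stays in the region where the definition of small-cluster solution makes sense, namely $u_k(t)\in[0,1]$.
\item[(b)] The maximal existence time $T^*$ coincides with $t_{\mathrm{ex}}$.
\end{itemize}

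For nonnegativity, I write each equation as $\dot u_n=P_n(u)-u_n\,c_n(u)$. Here $P_n\ge 0$ whenever $u\ge 0$, and
\[
c_n=\frac{2n}{M}\sum_{i=1+\ell-n}^{\ell} iu_i
\]
is continuous along the trajectory while $M>0$. Setting $v_n=u_n e^{\int_0^t c_n}$ gives $\dot v_n=P_n e^{\int c_n}$. A standard argument then shows the solution stays in the closed orthant: for instance, perturb the birth term by $\varepsilon$, get strict positivity, and pass to the limit by continuous dependence. Alternatively, note that the orthant is forward invariant because $\dot u_n\ge 0$ whenever $u_n=0$ and $u\ge 0$.

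For the upper bound, the finite-sum version of Theorem \ref{momentloss} gives $m_0(t)=1-t$. No convergence issues arise here, and the interchange of summation is trivial since $u_i\equiv 0$ for $i>\ell$ by construction. Hence $0\le u_k\le m_0\le 1$. The same identity shows $t<1$ on the existence interval.

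For (b), the solution is bounded, so it cannot escape to infinity. By the standard continuation theorem, the maximal solution on $[0,T^*)$ must therefore approach $\partial D$, i.e. $\liminf_{t\to T^*}M(u(t))=0$. One could worry about $M$ oscillating near zero. However, $M$ is a nonnegative quadratic form in the bounded, continuous solution, and $u(t)$ has a limit as $t\to T^*$ because $\dot u$ is bounded by $O(1)$. Indeed, each term $iu_iu_j/M$ is bounded by a constant times $u_i$ times a ratio that is at most $1/\min\{\ldots\}$. More concretely, $|\dot u_n|\le C\,(\text{numerator})/M$, and since every numerator monomial $iju_iu_j$ with $i+j>\ell$ appears in $M$, $|\dot u_n|\le 2\ell$. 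So $u$ is Lipschitz, extends continuously to $T^*$, and $M(u(T^{*-}))=0$. This matches the definition $t_{\mathrm{ex}}=\inf\{s:M(s^-)=0\}$, giving $T^*=t_{\mathrm{ex}}$.

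Uniqueness among all small-cluster solutions on $[0,t_{\mathrm{ex}})$ follows because any such solution lies in $D$ there and $F$ is locally Lipschitz on $D$.

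The step I expect to need the most care is this boundary identification: showing that the bound $|\dot u_n|\le 2\ell$ holds uniformly, by comparing each term of (\ref{mainsmall}) against $M$, so that the exhaustion time is genuinely the breakdown time rather than an earlier blow-up.
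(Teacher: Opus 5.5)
Your proof is correct, but it reaches nonnegativity by a different route than the paper.

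The paper proceeds in two steps:
\begin{itemize}
\item It gets the analytic solution on $[0,t_{\mathrm{ex}})$ from Cauchy--Kovalevskaya plus a continuation argument (Theorem~\ref{smallgeneu}).
\item It proves nonnegativity separately using reaction classes. By Theorem~\ref{smallcollproof} and Corollary~\ref{poscor}, $u_n(t)\sim C_n t^{s_n}>0$ near $t=0$ for $n\in\mathcal S_\infty$. For $n\notin\mathcal S_\infty$, all derivatives vanish at $0$, so analyticity forces $u_n\equiv 0$. The extension of nonnegativity to all of $[0,t_{\mathrm{ex}})$ is then only asserted to be straightforward.
\end{itemize}

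You instead use quasi-positivity of the vector field: $F_n(u)\ge 0$ whenever $u\ge 0$, $u_n=0$ and $M(u)>0$. Combined with the $\varepsilon$-perturbation argument, this gives forward invariance of the orthant on the whole maximal interval at once. It needs neither reaction classes nor analyticity. Your integrating-factor remark would not suffice on its own, since $P_n$ involves the other components; the perturbation argument is what carries the step.

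Your ordering is also logically tidier:
\begin{itemize}
\item The bound $|\dot u_n|\le 2$ genuinely requires $u\ge 0$: the birth part is at most $1$ and the loss part at most $2$ because both numerators are subsums of $M$. You establish nonnegativity before using this bound. The paper's continuation step invokes $|\dot u_i|<3$ before positivity has been proved.
\item You obtain the upper bound $u_k\le m_0=1-t\le 1$, which the definition of a small-cluster solution requires, from the algebraic identity $\sum_n F_n\equiv -1$ on $D$. The paper leaves this implicit.
\end{itemize}

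What the paper's route buys in exchange is finer information. It identifies exactly which components are positive and which vanish identically, it gives the short-time growth rates $t^{s_n}$, and the same machinery is reused for the large-cluster system.
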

Since the system is finite, the second moment also remains finite, and no
gelation occurs. Thus, any small-cluster solution is a pre-gelation
solution. 
The proof of Theorem \ref{smalleuthm} is presented in Section \ref{sec:reactsmall}.
 The main hurdle is in showing that solutions remain nonnegative, which is
addressed
with the concept of reaction classes that provide small time asymptotics
of cluster fraction growth. 

For large clusters, we will show local existence and uniqueness for pre-gelation
solutions.
\begin{theorem} \label{largeeuthm}
For large initial conditions which are bounded (i.e., there exists $L\ge
\ell+1$
such that $u_i^0 = 0$ if $i> L$), there 
is a positive time $T>0$  such that there is a unique large-cluster solution
of (\ref{realmaineqn})
which is also  pre-gelation solution $u(t)$.
Cluster fractions for this unique solution are smooth (i.e. $u_i \in C^\infty([0,T],
[0,1])$ for $i \ge \ell+1$). \end{theorem}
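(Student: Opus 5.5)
We plan to reduce (\ref{realmaineqn}) to a triangular system driven by one scalar unknown, namely $m_1(t)$. The essential observation is that in (\ref{realmaineqn}) the equation for $u_n$ only involves $u_i$ with $\ell+1\le i\le n-1$, together with $m_1$. If $m_1$ is regarded as a given positive continuous function, the system is lower triangular and linear in $u_n$. Each $u_n$ is then determined recursively by the variation of constants formula
\begin{equation*}
u_n(t) = e^{-2n\int_0^t m_1^{-1}}\left(u_n^0 + \int_0^t e^{2n\int_0^s m_1^{-1}} \sum_{i=\ell+1}^{n-1}\frac{i(n+\ell-i)}{m_1(s)^2}u_i(s)u_{n+\ell-i}(s)\,ds\right).
\end{equation*}
This formula makes nonnegativity immediate. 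It also makes uniqueness immediate once $m_1$ is pinned down.

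For a pre-gelation solution, Theorem~\ref{momentloss} forces $m_1(t)=m_1(0)-\ell t$, which is positive for $t<m_1(0)/\ell$. Any pre-gelation large-cluster solution must therefore coincide with the recursion above, taken with this explicit $m_1$, and that gives uniqueness. Smoothness follows by induction on $n$: $u_{\ell+1}$ is an explicit exponential, and each later $u_n$ solves a linear ODE whose coefficients and forcing are $C^\infty$ on $[0,T]$ by the induction hypothesis. An equivalent route is to pass to the $q$-coordinates (\ref{qness}) with the time change $d\tau = dt/m_1$.

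For existence we define $u_n$ by the recursion with $m_1(t):=m_1(0)-\ell t$. We must then verify two things on some $[0,T]$: first, that $\sum_n n^2 u_n$ converges uniformly, and second, that a posteriori $\sum_n n u_n(t)$ equals $m_1(0)-\ell t$, so that the assumed $m_1$ is self-consistent. For the first we follow McLeod. We bound $u_n(t)\le v_n(t)$, where $v$ solves the same recursion without the loss term, or else we use a generating-function majorant. With bounded support $u_i^0=0$ for $i>L$, a cluster of size $n$ requires roughly $(n-\ell)/(L-\ell)$ reactions to form. This reaction-class idea should give $u_n(t)\le C\,(Kt)^{c n}$ for some constants, so $\sum n^2 u_n$ converges uniformly for $t<T$ with $T$ small. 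Concretely, we would introduce $G(t,z)=\sum_n \bar v_n z^n$, dominated by a Riccati-type majorant $\partial_t G \le C z^{-\ell}(z\partial_z G)^2$, and show that $G$ is analytic in $z$ for $|z|<1+\delta$ when $t\le T$.

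For the second we repeat the computation from Theorem~\ref{momentloss}, now with the prescribed $m_1$ in the denominators rather than $\sum iu_i$. Write $\mu_1(t)=\sum iu_i$ and $\mu_0 = \sum u_i$. The reindexing computation gives $\dot\mu_1 = \mu_1^2(2\mu_2-\ell)/m_1^2 - 2\mu_2/m_1$, which is valid because $\mu_2$ converges uniformly. Here $\mu_2=\sum_i i^2u_i$, and a factor of $\mu_2$ is not closed. It is cleaner to set $w=\mu_1/m_1$. The first-moment identity then yields a linear ODE for $w-1$ of the form $\dot{(w-1)} = a(t)(w-1)$, together with $w(0)=1$, where $a$ is continuous because $\mu_2$ is finite and continuous. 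This gives $w\equiv1$ and hence $\mu_1=m_1$, so the constructed $u$ actually solves (\ref{realmaineqn}). Termwise differentiation of $\sum n u_n$ is justified by uniform convergence of the differentiated series, which is dominated using the $n^2$ bound.

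The main obstacle is the quantitative decay estimate on $u_n$ that makes $\sum n^2u_n$ uniformly convergent on a short interval. We would first attempt a direct inductive bound $u_n(t)\le A B^{n} t^{\lceil (n-L)/(L-\ell)\rceil}$, choosing $A,B$ from the convolution structure and Catalan-type counts. If that fails to close, we would fall back on the analytic generating-function majorant.
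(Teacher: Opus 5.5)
Your proposal is correct in outline. Uniqueness and smoothness are argued as in the paper: Theorem~\ref{momentloss} gives $m_1(t)=m_1(0)-\ell t$, and then the triangular integrating-factor recursion (\ref{mainiter}) determines each $u_n$.

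\textbf{Existence: a different route.} The paper follows McLeod:
\begin{enumerate}
\item it truncates to finite systems whose products have at most $N$ particles;
\item it proves $N$-uniform bounds $u_i^{(N)}(t)\le (A_i/i)\,t^{(i-\ell)/2L-1}$ through a Catalan-like recursion;
\item it extracts a uniformly convergent diagonal subsequence by Arzel\`a--Ascoli, controls moments of the limit by Fatou, and passes to the limit in the equations.
\end{enumerate}
The paper uses the explicit recursion only afterward, for uniqueness. You instead use that recursion, with the ansatz $m_1(t)=m_1(0)-\ell t$ inserted, as the construction itself, and close the loop a posteriori by showing that $\sum_n nu_n$ equals the ansatz. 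This removes truncation and compactness entirely, and nonnegativity, smoothness and uniqueness all come from one formula. The price is the self-consistency step, which needs uniform convergence of $\sum n^2u_n$ so that $\mu_1$ can be differentiated termwise. Both routes rest on the same decay estimate.

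\textbf{A slip in the self-consistency step.} The argument is sound, but your displayed identity is wrong. It should read $\dot\mu_1=\mu_1(2\mu_2-\ell\mu_1)/m_1^2-2\mu_2/m_1$, which gives
$\frac{d}{dt}(w-1)=\bigl(2\mu_2/m_1^2-\ell w/m_1\bigr)(w-1)$, as you want. With your version, $w\equiv 1$ would not even be a solution.

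\textbf{The decay estimate as written does not close.} Your first ansatz $u_n\le AB^nt^{k_n}$, with $k_n=\lceil (n-L)/(L-\ell)\rceil$, fails in the gain term:
\begin{itemize}
\item the weights satisfy $\sum_i i(n+\ell-i)\sim n^3/6$, while integrating in $t$ only returns a factor $1/k_n\sim (L-\ell)/n$;
\item $k_i+k_{n+\ell-i}\in\{k_n-1,k_n\}$ for every $i$, so at most one spare factor of $t$ is available.
\end{itemize}
That spare factor cannot absorb the leftover factor growing like $n^2$. You need polynomial decay in the ansatz, for example $u_n\le AB^{n-\ell}n^{-\beta}t^{k_n}$ with $\beta>2$. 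Then $\sum_i\bigl(i(n+\ell-i)\bigr)^{1-\beta}=O(n^{1-\beta})$, the induction closes for $A$ small, and taking $B$ large covers the base cases $n\le L$. This is exactly what the paper's bound $A_n\lesssim\gamma^{(n-\ell)/2L}/n^2$, obtained from Villarino's Catalan inequality, achieves. Your generating-function fallback would also work: the pure-gain majorant solves a Burgers-type equation with polynomial initial data.
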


Because (\ref{realmaineqn}) is an infinite set of differential equations,
we cannot directly appeal  to well-known existence and uniqueness  theory
 for
finite systems of ODEs.   In Section \ref{largeeuthm}, we give a proof of
Theorem \ref{largeeuthm}
which applies the Arzela-Ascoli theorem and a diagonalization argument  to
a sequence of truncated solutions which restrict cluster sizes.  It is similar
to McLeod's  proof found  \cite{mcleod1962infinite} for
establishing existence and uniqueness for the Smoluchowski equations with
multiplicative kernels and monodisperse initial conditions ($u_1 = 1$ and
$u_j = 0$ for $j\neq 1$),  but some extra care is needed to account for 
 positive emission and the broader class of initial conditions with finite
support. Additionally,  \cite{mcleod1962infinite} only establishes existence
of solutions, whereas we show uniqueness directly by constructing an iterative
solution formula which relies on pre-gelation conditions.   
\section{Reaction classes and small-cluster solutions} \label{sec:reactsmall}

Existence and uniqueness
for (\ref{mainsmall}) follow quickly from standard theory:

\begin{theorem} \label{smallgeneu}
  There is a unique analytic solution
  $u(t) \in C^{\omega}([0,t_{\mathrm{ex}}))$ of (\ref{mainsmall}) with initial
conditions (\ref{maininits}).
\end{theorem}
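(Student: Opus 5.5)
The plan is to read (\ref{mainsmall}) as a finite autonomous system $\dot u = F(u)$ on $\mathbb R^\ell$ and apply the Cauchy--Kovalevskaya / Picard--Lindelöf theory for analytic vector fields.

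\textbf{Step 1: analyticity of $F$ where $M\neq 0$.} Each component of the right-hand side is a quadratic polynomial in $(u_1,\dots,u_\ell)$ divided by the quadratic polynomial
\[
M(u)=\sum_{i=1}^\ell\sum_{j=1+\ell-i}^\ell ij\,u_iu_j .
\]
Hence $F$ is a rational function, real analytic on the open set $\Omega=\{u\in\mathbb R^\ell: M(u)\neq 0\}$, and in particular locally Lipschitz there.

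\textbf{Step 2: local solution and maximal extension.} If $M(u^0)=0$, then $t_{\mathrm{ex}}=0$ and the statement is vacuous. Otherwise $u^0\in\Omega$. Picard--Lindelöf gives a unique local $C^1$ solution. The Cauchy theorem for analytic ODEs, proved by the method of majorants, shows that this solution is real analytic in $t$ near $0$. The solution then extends uniquely to a maximal interval $[0,T^*)$ inside $\Omega$. Analyticity propagates along the whole interval by re-applying the local analytic theorem at each point. Uniqueness on any subinterval follows from the local Lipschitz property by the standard Gronwall argument.

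\textbf{Step 3: $T^*\ge t_{\mathrm{ex}}$.} Blow-up or exit from $\Omega$ at a time $T^*<\infty$ can only happen if $u(t)$ becomes unbounded or approaches $\partial\Omega=\{M=0\}$. I would rule out unboundedness by bounding the solution a priori while $M>0$. The computation in the proof of Theorem~\ref{momentloss} is purely algebraic for a finite system, so it gives $m_0(t)=1-t$ and $m_1(t)=m_1(0)-\ell t$. If the $u_i$ stay nonnegative, these identities bound every $u_i$ by $1$, so $u$ cannot blow up before $M\to 0$. Consequently $T^*$ equals the first time $M(t^-)=0$, which is $t_{\mathrm{ex}}$.

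\textbf{Main obstacle: nonnegativity.} Step 3 needs $u_i(t)\ge 0$, and nonnegativity is also part of the definition of a small-cluster solution. For this I would use the quasi-positivity of the vector field. When $u_n=0$ and all other $u_i\ge 0$, the loss term $-2nu_n(\cdots)/M$ vanishes, while the gain term is a sum of nonnegative products divided by $M>0$. So $F_n(u)\ge 0$ on the boundary face $\{u_n=0\}$ of the orthant. The standard invariance argument then applies. Perturb the system to $\dot u=F(u)+\varepsilon\mathbf 1$, observe that the perturbed solutions cannot exit the orthant, and let $\varepsilon\to 0$ using continuous dependence on the vector field. Equivalently, one can write $\dot u_n=-a_n(t)u_n+b_n(t)$ with $b_n\ge 0$ and use an integrating factor, combined with a first-exit-time argument. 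This is the step where care is needed, since the gain terms couple the components and $b_n$ depends on the other components. The paper suggests it instead handles this via reaction classes and small-time asymptotics, which is useful for showing strict positivity of the clusters that are reachable from the initial data.
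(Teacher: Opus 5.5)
Your proposal is correct. Its skeleton matches the paper's: $F$ is rational and analytic on $\{M\neq 0\}$, the Cauchy--Kovalevskaya/majorant theorem gives a local analytic solution, and the solution is continued to a maximal interval.

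Where you differ is the continuation step. The paper asserts $|\dot u_i|<3$ on the maximal interval, so each $u_i$ has a limit at $T$ and can be continued while $M(T^-)>0$. That bound only holds for nonnegative $u$: it uses that the gain and loss numerators are at most $M$ and $2M$. In the paper, nonnegativity is proved only afterwards, in Theorem~\ref{smallpos}, via reaction classes and analyticity near $t=0$, plus an extension in time that is called straightforward. So the paper's extension step tacitly relies on a later result.

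You instead prove nonnegativity first, on all of $[0,T^*)$, from quasi-positivity of $F$ on the faces of the orthant. The $\varepsilon\mathbf 1$ perturbation is the right device here, since several components typically start at zero. You then use the purely algebraic identity $m_0=1-t$ to confine $u$ to $[0,1]^\ell$, so the blow-up alternative leaves $M\to 0$ as the only way the solution can stop. This route is self-contained and removes the forward dependence.

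What the paper's reaction-class route buys in exchange is finer information: strict positivity for small $t>0$ exactly when $n\in\mathcal S_\infty$, identical vanishing otherwise, and the $t^{s_n}$ short-time asymptotics. The price is that it needs analyticity and a separate argument to extend in time.

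Two small points are worth writing out. First, $T^*<\infty$ (indeed $T^*\le 1$) follows from $m_0=1-t$ together with nonnegativity. Second, use the form of the blow-up alternative saying that the maximal solution eventually leaves each compact set $\{u\in[0,1]^\ell : M(u)\ge\delta\}\subset\{M\neq 0\}$. That gives $M(u(t))\to 0$ as $t\to T^*$, not just along a subsequence, which is what the definition of $t_{\mathrm{ex}}$ requires.
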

  
\begin{proof}
We can write (\ref{mainsmall})
as $\dot u = F(u)$,
where $F$ is analytic at $M \neq 0$. From the Cauchy-Kovalevskaya theorem,
we are guaranteed a unique analytic solution $u(t)$  in an interval $[0,t^*)$
  for some $t^*>0$. From standard arguments,
this  can interval can be extended to $[0,t_{\mathrm{ex}})$. Specifically,
suppose for the sake of contradiction that we can extend
solutions  to a maximum interval
$(0,T)$ for which $\lim_{t\rightarrow T^-} M(t)>0$ and $T<t_{\mathrm{ex}}$.
  Simple estimates show that $|\dot u_i|<3$ for $1 \le i \le \ell$, and so
solutions must be bounded in $[0,T)$. Thus for each
$u_i(t)$ there will be at least one accumulation point at time $T$.  In fact,
there is only one, as the existence of more than one such accumulation point
implies unbounded derivatives.  Since this limit exists,
we can further extend our interval of existence, in contradiction to the
maximality of $T$. 

\end{proof}

It remains to show that the solution is nonnegative. Toward this end we will
need notation
which tracks how many reactions are required to create clusters of a certain
size.  
\begin{definition}
Given initial conditions $\{u_n^0\}_{n \ge 1}$, the \textbf{$k$th reaction
class} for is defined recursively as
\begin{align}
\mathcal S_0 &= \{n:u_n(0)>0\}, \\
\mathcal S_k &= \{n \in \mathbb N_+: n \notin \cup_{i <k} \mathcal S_i \hbox{
and }  \exists n_1, n_2 \in
\cup_{i <k} \mathcal S_i \hbox{ such that } n_1+n_2-\ell = n \} \quad \hbox{
for } k \ge 1.
\end{align}
The set of all \textbf{attainable cluster sizes} is denoted $\mathcal S_\infty
= \cup_{i\ge
0} \mathcal S_i$.
We say  $s_n\ge 0$ is the \textbf{reaction number} for $n$-clusters if $n
\in \mathcal S_{s_n}$.
\end{definition}

 Thus, the $k$th reaction class $\mathcal S_{k}$ are those cluster sizes
 which require $k$ reactions to form, and the reaction number $s_n$ is
the required number of collisions to form an $n$-cluster. Under generic initial
conditions, we should not expect a sequence $\{a_n\}_{n\ge 0}$ to be increasing
if $a_n \in \mathcal S_n$. Finding $\mathcal S_k$ is a ``taxed'' version
of the famous change making problem: given coins that only take certain values
and a desired value $A$, determine the least number of coins whose values
sum to $A$. Determining
$\mathcal S_\infty$ is the ``taxed'' version of the Frobenius coins problem:
for coins taking certain values, determine all values that  can be attained
from adding values of   multiple coins    \cite{alfonsin2005diophantine}. Solving this problem is known to be NP hard \cite{cai2009canonical}.

We now show a result that gives a relation between reaction classes and
small time growth of clusters.  Establishing this result
is somewhat more straightforward for large-cluster solutions, so we will
give this proof
first and then show how it can be  modified to work for small-cluster solutions.

\begin{theorem} \label{largecollproof}
Let $u(t)$ be a large-cluster solution. and let $j\ge 0$ be given. For all
 $n\ge 1$,  

(i) If  $s_n>j$, then   $\frac{d^j
}{dt^j}u_n(0)=
0$.

(ii)If $s_{n} = j$, then  $\frac{d^j }{dt^j} u_n(0)
>0$.
\end{theorem}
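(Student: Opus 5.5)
Both parts will be proved together by strong induction on $j$, working directly with the large-cluster equations (\ref{realmaineqn}). The induction hypothesis at stage $j$ asserts (i) and (ii) for every $n$ and every $j'\le j$. Two preliminary facts are needed. First, $m_1(t)>0$ near $t=0$ and, by Theorem \ref{momentloss}, $m_1(t)=m_1(0)-\ell t$ is smooth. So the coefficients $1/m_1^2$ and $1/m_1$ are smooth, and differentiating (\ref{realmaineqn}) repeatedly (the right-hand side of each equation is a finite sum) shows every $u_n$ is $C^\infty$ near $0$. Second, write (\ref{realmaineqn}) as
\[
\dot u_n + \frac{2n}{m_1}u_n = G_n(t) := \sum_{i=\ell+1}^{n-1}\frac{i(n+\ell-i)}{m_1^2}u_iu_{n+\ell-i}.
\]
Every index pair $(i,\,n+\ell-i)$ in $G_n$ corresponds to a reaction producing an $n$-cluster, with both indices strictly less than $n$.

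The base case $j=0$ is immediate: $s_n=0$ iff $u_n(0)>0$.

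For the inductive step, fix $n$ and $j\ge1$, and apply Leibniz's rule $j-1$ times to the displayed equation:
\[
u_n^{(j)}(0) = G_n^{(j-1)}(0) - \sum_{r=0}^{j-1}\binom{j-1}{r}\Big(\tfrac{2n}{m_1}\Big)^{(j-1-r)}(0)\,u_n^{(r)}(0).
\]
If $s_n\ge j$, the induction hypothesis (i) kills every $u_n^{(r)}(0)$ with $r\le j-1$, so $u_n^{(j)}(0)=G_n^{(j-1)}(0)$. Expanding $G_n^{(j-1)}(0)$ by Leibniz gives a sum of terms
\[
c\cdot(\text{derivatives of }1/m_1^2)\cdot u_i^{(a)}(0)\,u_{n+\ell-i}^{(b)}(0), \qquad a+b\le j-1.
\]
By the definition of reaction classes, $s_n\le 1+\max(s_i,s_{n+\ell-i})$ for any such pair, with the convention $s=\infty$ for unattainable sizes.

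Case $s_n>j$: then for every pair, $\max(s_i,s_{n+\ell-i})\ge j>a,b$. Hence one factor in each term vanishes by the hypothesis (i), and (i) follows at stage $j$.

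Case $s_n=j$: every pair has $\max(s_i,s_k)\ge j-1$, where $k=n+\ell-i$.
\begin{itemize}
\item A term is nonzero only if $a\ge s_i$ and $b\ge s_k$. This forces $a+b\ge s_i+s_k\ge j-1$, so $a+b=j-1$.
\item In particular, no derivative falls on $1/m_1^2$, which then contributes only $m_1(0)^{-2}>0$.
\item Moreover $a=s_i$ and $b=s_k$, with $s_i+s_k=j-1$.
\end{itemize}

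The main obstacle is exactly this sign step: showing the surviving sum is strictly positive rather than merely nonnegative. One might worry that $s_i+s_k<j-1$ could occur; it cannot, since it would imply $s_n\le 1+\max(s_i,s_k)<j$. Each surviving term is then a positive binomial coefficient times $i(n+\ell-i)\,m_1(0)^{-2}$ times $u_i^{(s_i)}(0)\,u_k^{(s_k)}(0)$, and both derivatives are positive by the induction hypothesis (ii), since $s_i,s_k\le j-1$. So all surviving terms are positive.

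It remains to show at least one term survives, i.e.\ that some pair has $s_i+s_k=j-1$. Membership $n\in\mathcal S_j$ only supplies a pair with $\max(s_i,s_k)=j-1$; if the partner has $s>0$, the orders add beyond $j-1$. I would therefore first check whether the paper's recursion forces $s_i+s_k=j-1$. If it does not, I would redefine $s_n$ as the minimal number of reactions, computed additively as $s_n=\min(s_i+s_k+1)$. If instead the theorem is meant with the stated $\max$-based classes, the order of vanishing must be reconciled with the sum of orders, and I would expect (ii) to need this additive reinterpretation. Under the additive reading, the argument closes cleanly: the terms with $a+b=j-1$ are exactly the minimal pairs, at least one exists, and positivity follows.
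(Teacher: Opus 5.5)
Your route is the paper's: strong induction on $j$, $j-1$ applications of Leibniz's rule, using (i) to kill terms and (ii) to show the surviving terms are positive. The paper passes to $q$-coordinates, so the quadratic term carries no factor $m_1^{-2}$ and the left side reduces to $m_1(0)q_n^{(j)}(0)$ modulo terms killed by (i). You stay in $u$-coordinates and observe that any derivative landing on $1/m_1^2$ forces $a+b<j-1$, hence a vanishing factor. Both are fine. Both also tacitly need $m_1(t)=m_1(0)-\ell t$, i.e.\ the pre-gelation property, which is not part of the definition of a large-cluster solution.

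The step you flag is a genuine hole, and it is a hole in the paper's proof as well. The paper partitions the indices into $A_1=\{i: s_i+s_{n+\ell-i}=j-1\}$ and $A_2$, and silently assumes $A_1\neq\emptyset$. You should not hedge here: with the recursion as written, $s_n=1+\min\max(s_i,s_{n+\ell-i})$, the set $A_1$ can be empty and (ii) is false.

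Take $\ell=1$ and $u_n(0)=\delta_{n,2}$. Then $\mathcal S_0=\{2\}$, $\mathcal S_1=\{3\}$ and $\mathcal S_2=\{4,5\}$, so $s_5=2$. But the only pairs producing a $5$-cluster are $(3,3)$ and $(2,4)$, and both $u_3^2$ and $u_2u_4$ are $O(t^2)$. Hence $u_5=O(t^3)$ and $u_5''(0)=0$. The iterative formula confirms this: it gives $u_5=\tfrac{11}{4096}m_1^{10}(2-m_1)^3=\tfrac{11}{4096}(2-t)^{10}t^3$, which matches the paper's listed coefficients.

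So commit to the additive reaction number: $s_n=0$ if $u_n(0)>0$, and otherwise $s_n=\min(s_i+s_{n+\ell-i}+1)$. This is what the paper's prose (``required number of collisions'') intends. Under it your argument closes, because a minimizing pair lies in the range $\ell+1\le i\le n-1$ of $G_n$ and contributes a strictly positive term.

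One adjustment is needed when you switch. The inequality you use throughout, $s_n\le 1+\max(s_i,s_{n+\ell-i})$, is false for the additive $s$: in the example, $s_5=3>1+\max(s_3,s_3)=2$. Replace it by $s_n\le 1+s_i+s_{n+\ell-i}$. This gives $s_i+s_k\ge j>a+b$ in case (i) and $s_i+s_k\ge j-1$ in case (ii), which is exactly what the rest of your argument uses. Part (i) holds under either definition, since the additive number dominates the one defined by the paper's recursion.
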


\begin{proof}

We will work in $q$-coordinates (\ref{qness}), rewritten as
\begin{equation}
m_1 \dot q_n  
 + (2n-\ell)q_n= \sum_{i = \ell+1}^{n-1} i(n+\ell-i)q_i q_{n+\ell - i}. 
\label{qness2}
\end{equation}
Since $m_1(0)$ is positive, it is straightforward to show that if (i) and
(ii) hold for $\{q_n\}_{n\ge 0}$, then they both hold for $\{u_n\}_{n\ge
0}$ as well.
We show (i) through a strong induction argument on $j$. Certainly, (i) holds
for  $j = 0$, since $q_n(0) = 0$ for $n \notin
\mathcal S_0$.  We now assume (i)  holds for $0 \le i \le j-1$.
 Suppose $n \in \mathcal S_k$, with $k>j$.  We take $j-1\le k-2$ derivatives
of (\ref{qness2}). On\ the left hand side,
from the inductive hypothesis,\begin{equation}
        \frac{d^{j-1} }{dt^{j-1}}\Big |_{t = 0}(m_{1} \dot q_n + (2n-\ell)q_n
) =
m_{1}q^{(j)}_n(0).
 \end{equation}
On the right hand side (\ref{qness2}), from the Leibniz rule, taking $j-1$
derivatives results
in terms of the form $q_i^{(j_1)}q_{n+\ell-i}^{(j_2)}$ with $j_1+j_2  = j-1
\le k-2$. We also note the $s_i+s_{n+\ell-i} \ge k-1$, since otherwise an
$i$ cluster and $n+\ell-i$ cluster could react and create an $n$ cluster
with fewer than $k$ total reactions, a contradiction to $n \in \mathcal
S_k$.
Thus either $j_1< s_i$ or $j_2<s_{n+\ell-i}$, and by the inductive hypothesis
one of the terms in $q_i^{(j_1)}q_{n+\ell-i}^{(j_2)}$ must vanish at 0. 
This implies $q^{(j)}_n(0) = 0$, establishing (i) for $j$ and therefore establishing
the inductive
hypothesis to prove (i).

To show (ii) holds, we use (i) and an induction argument on $j\ge 0$. For
$j= 0$, (ii) holds from the definition of $\mathcal S_0$.  For
the inductive step, assume (ii) holds for  $0 \le i \le j-1$. Suppose $n
\in \mathcal S_j$. Taking
$j-1$ derivatives of (\ref{qness2}), we find that the left-hand side reduces
to $m_{1}q^{(j)}_n(0)$.  The indices in the sum of the right-hand side  can
be partitioned
as $I = \{\ell+1, \dots, n-1\} = A_1 \sqcup A_2$, with
\begin{equation}
  A_1 = \{i \in I:s_i
+ s_{n-i+\ell} = j-1\}, \qquad   A_2 = \{i \in I:s_i
+ s_{n-i+\ell} > j-1\}. 
\end{equation}
 By
the same methods used to show  (i),   all terms corresponding
to indices in $A_2$
will
vanish.  For $i \in A_1$, by the induction hypothesis on (ii), the only nonvanishing
terms evaluated at 0 are those of
the form
\begin{equation}
i(n+\ell-i)q_i^{(s_i)}(0)q_{n+\ell - i}^{(s_{n+\ell-i})}(0)>0,
\end{equation}  
and therefore $q^{(j)}_n(0)>0$, establishing (ii).

\end{proof}

The proof for the small-cluster case is similar, albeit more messy since
$q$-coordinates do not have the same clean representation as (\ref{qness})
found for large clusters.

\begin{theorem} \label{smallcollproof}
For a small-cluster solution $u(t)$, statements (i) and (ii) in Theorem \ref{largecollproof}
also hold. 
\end{theorem}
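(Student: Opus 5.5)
The plan is to mirror the induction of Theorem \ref{largecollproof}, working directly in $u$-coordinates. Instead of the $q$-variables, multiply (\ref{mainsmall}) by $M$ and use the form
\begin{equation*}
M\dot u_n + 2nu_n\sum_{i=1+\ell-n}^{\ell} iu_i = \sum_{i=n}^{\ell} i(n+\ell-i)u_iu_{n+\ell-i}.
\end{equation*}
Here $M(0)>0$ because $t_{\mathrm{ex}}>0$, and $M$ is analytic by Theorem \ref{smallgeneu}. Set $D_n(t) := 2n\sum_{i=1+\ell-n}^{\ell} iu_i(t)$, which is a smooth bounded coefficient. The left side becomes $M\dot u_n + D_n u_n$, which plays the role of $m_1\dot q_n+(2n-\ell)q_n$ in (\ref{qness2}). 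The one change is that the coefficients $M$ and $D_n$ are now time dependent rather than constant. This is harmless: by the Leibniz rule, $\frac{d^{j-1}}{dt^{j-1}}(M\dot u_n + D_n u_n)$ at $t=0$ equals $M(0)u_n^{(j)}(0)$ plus a linear combination of $u_n^{(r)}(0)$ with $r\le j-1$.

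For part (i), I would use strong induction on $j$, exactly as before. Take $n$ with $s_n=k>j$. Then $s_n>r$ for all $r\le j-1$, so the inductive hypothesis kills every lower-order term on the left. The left side therefore reduces to $M(0)u_n^{(j)}(0)$. On the right, each product $u_i^{(j_1)}u_{n+\ell-i}^{(j_2)}$ with $j_1+j_2=j-1$ vanishes. The reason is the same as in the large case: $s_i+s_{n+\ell-i}\ge k-1>j-1$, since otherwise $n$ would be reachable in fewer than $k$ reactions. The one point to check is that the reindexed sum in (\ref{mainsmall}) (indices $i$ from $n$ to $\ell$) still ranges only over pairs $(i,n+\ell-i)$ of small clusters producing $n$. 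It does, since both indices lie in $[n,\ell]$. So $u_n^{(j)}(0)=0$.

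For part (ii), I would again induct on $j$; the case $j=0$ is the definition of $\mathcal S_0$. Let $s_n=j$. On the left, the terms $u_n^{(r)}(0)$ with $r<j$ vanish by (i), so the left side is $M(0)u_n^{(j)}(0)$. On the right, split the index set into $A_1$ (pairs with $s_i+s_{n+\ell-i}=j-1$) and $A_2$ (pairs with the sum larger than $j-1$), as before. The $A_2$ terms vanish by (i). For $i\in A_1$, the only surviving Leibniz term is
\[
\binom{j-1}{s_i}\, i(n+\ell-i)\, u_i^{(s_i)}(0)\,u_{n+\ell-i}^{(s_{n+\ell-i})}(0),
\]
which is positive by the inductive hypothesis on (ii). Also $A_1\neq\emptyset$, because $n\in\mathcal S_j$ means some pair from $\cup_{r<j}\mathcal S_r$ produces $n$. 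One of the two parents must have reaction number exactly $j-1$, but the other may have a smaller reaction number, so the sum $s_i+s_{n+\ell-i}$ need not equal $j-1$.

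This last point is the main obstacle, and it affects the large case as well: the reaction-number bookkeeping must be consistent. The bound used in (i), $s_i+s_{n+\ell-i}\ge k-1$, treats reaction numbers additively, so (ii) really needs the derivative order at which $u_n$ first becomes positive to be additive over the parents. I would first try to show directly that the lowest nonvanishing derivative order $d_n$ satisfies $d_n=1+\min_i(d_i+d_{n+\ell-i})$, and then identify $d_n$ with $s_n$ under the definition of $\mathcal S_k$. If that identification fails, I would read the statement with $s_n$ interpreted as this additive count (the minimal number of reactions in a reaction tree), for which the argument above goes through verbatim.
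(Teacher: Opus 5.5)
Your part (i) is correct and matches the paper's argument; clearing $1/M$ instead of differentiating it is cosmetic. With the paper's definition, the inequality you need holds because in fact $\max(s_i,s_{n+\ell-i})\ge k-1$.

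For part (ii), the obstacle you flag is genuine, and your first plan cannot succeed. You hoped to identify the vanishing order $d_n$ with $s_n$, but the two differ. The recursion for $\mathcal S_k$ gives $s_n=1+\min\max(s_a,s_b)$ over $a+b-\ell=n$, which is a generation depth. By contrast, $d_n$ obeys your additive recursion. Consequently (ii) is false for the reaction number as formally defined. The prose gloss ``required number of collisions'' describes $d_n$, not $s_n$.

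The paper's sketch makes exactly the step you caught. It asserts that the surviving terms are those with $s_i+s_{n+\ell-i}=j-1$, without checking that such a pair exists. Such a pair exists only if some minimizing pair has one parent in $\mathcal S_0$. For the same reason, your sentence asserting $A_1\neq\emptyset$ should be deleted.

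Here is a small-cluster counterexample. Take $\ell=5$ and $u_4(0)=1$, so $M(0)=16$. Then $\mathcal S_0=\{4\}$, $\mathcal S_1=\{3\}$, $\mathcal S_2=\{1,2\}$, and $s_1=2$. In (\ref{mainsmall}) the terms containing $u_5$ cancel, leaving exactly
\[
\dot u_1=\frac{16u_2u_4+9u_3^2}{M},\qquad \dot u_2=\frac{8u_4(3u_3-2u_2)}{M}.
\]
Since $u_2(0)=u_3(0)=0$, we get $\dot u_2(0)=0$. So the numerator $N=16u_2u_4+9u_3^2$ satisfies $N(0)=N'(0)=0$, which forces $\ddot u_1(0)=0$. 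This contradicts (ii) for $n=1$, $j=2$. In fact $u_1\sim\tfrac{7}{16}t^3$, matching $d_1=3$. The large case fails in the same way: for $\ell=1$ and dimer data, $s_5=2$ but $u_5\sim\tfrac{11}{4}t^3$.

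So commit to your fallback. Set $d_n=0$ on $\mathcal S_0$ and $d_n=1+\min_{a+b-\ell=n}(d_a+d_b)$. With this definition your argument proves (i) and (ii) verbatim, and $A_1\neq\emptyset$ now holds by definition. Statement (i) for the original $s_n$ then follows from $s_n\le d_n$.

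Downstream consequences:
\begin{itemize}
\item Corollary \ref{poscor} must read $u_n\sim C_nt^{d_n}$.
\item The positivity argument of Theorem \ref{smallpos} survives, because $d_n<\infty$ exactly when $n\in\mathcal S_\infty$.
\end{itemize}
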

\textit{Proof sketch.} The proof mirrors the induction arguments in Theorem
\ref{largecollproof}. For the induction step part (i), we again take $j-1<k-2$
derivatives of (\ref{mainsmall}).  The first   sum has terms proportional
to $(\frac 1{M(0)})^{(j_1)} u_i(0)^{(j_2)} u_{n-i+\ell}(0)^{(j_3)}$ with
$j_1+j_2+j_3
= j-1$ and $j_1,j_2,j_3\ge0$.  Then $j_2+j_3<k-2$, and so each of these terms
must vanish.  It is clear that all terms in
the second sum must also  vanish.
For the inductive step in (ii), after taking $j-1$ derivatives, the only
terms
which do not vanish will be those with form 
\begin{equation}
  \frac{i(n+\ell-i)}{M(0)}u_i^{(s_i)}(0)u_{n+\ell - i}^{(s_{n+\ell-i})}(0)>0.
  \end{equation}
\qed

 \begin{corollary}\label{poscor}(Short-time asymptotics) For solutions $u(t)$ with
small or large
initial conditions, if  $n \in  \mathcal S_k$, then there
is a positive constant $C_n>0$ such that  $u_n(t) \sim C_nt^k$ as $t \rightarrow
0^+$.
\end{corollary}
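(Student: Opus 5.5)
The plan is to combine Theorems \ref{largecollproof} and \ref{smallcollproof} with Taylor's theorem at $t=0$. Fix $n\in\mathcal S_k$, so $s_n=k$.

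First, apply part (i) of Theorem \ref{largecollproof} (large initial conditions) or Theorem \ref{smallcollproof} (small initial conditions) with $j=0,1,\dots,k-1$. Since $s_n=k>j$ for each such $j$, this gives $u_n^{(j)}(0)=0$ for $0\le j\le k-1$. Next, apply part (ii) with $j=k$ to get $u_n^{(k)}(0)>0$. I would then set
\begin{equation*}
C_n=\frac{u_n^{(k)}(0)}{k!}>0 .
\end{equation*}

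The second step is to justify Taylor's theorem with a remainder of order $k+1$. This needs enough regularity of $u_n$ on a right neighborhood of $0$.
\begin{itemize}
\item In the small-cluster case, Theorem \ref{smallgeneu} gives an analytic solution on $[0,t_{\mathrm{ex}})$. If $t_{\mathrm{ex}}>0$, which we assume (it holds whenever $M(0)>0$, and the equations are only defined then), then
\begin{equation*}
u_n(t)=\sum_{j\ge k}\frac{u_n^{(j)}(0)}{j!}t^j=C_nt^k\bigl(1+O(t)\bigr).
\end{equation*}
\item In the large-cluster case, Theorem \ref{largeeuthm} gives $u_n\in C^\infty([0,T])$ for bounded large initial data. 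Taylor's theorem with Lagrange remainder then gives
\begin{equation*}
u_n(t)=C_nt^k+\frac{u_n^{(k+1)}(\xi_t)}{(k+1)!}t^{k+1},\qquad \xi_t\in(0,t).
\end{equation*}
Because $u_n^{(k+1)}$ is continuous, it is bounded on $[0,T]$, so $u_n(t)/(C_nt^k)\to1$ as $t\to0^+$.
\end{itemize}
Even without assuming $k+1$ derivatives, $C^k$ regularity is enough: the Peano form of the remainder gives $u_n(t)=C_nt^k+o(t^k)$, which is all that $\sim$ requires.

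The main obstacle is regularity, not the algebra. The derivative computations in Theorems \ref{largecollproof} and \ref{smallcollproof} implicitly assume $u_n$ has $k$ derivatives at $0$, obtained by repeatedly differentiating the ODE. For large clusters I would check this by induction on the order. The right-hand side of (\ref{realmaineqn}) for fixed $n$ is a finite sum of products of the $u_i$ together with $1/m_1(t)$, where $m_1(t)=m_1(0)-\ell t$ is smooth and positive near $0$ on pre-gelation solutions by Theorem \ref{momentloss}. So if every $u_i$ is $C^r$, then $\dot u_n$ is $C^r$ and $u_n$ is $C^{r+1}$.

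I would also confirm that the corollary concerns the solutions these theorems provide. These are the unique analytic small-cluster solution, and the smooth pre-gelation large-cluster solution for bounded initial data. For unbounded large initial data I would state the result under the assumption that the solution is pre-gelation, so that the same bootstrap applies.
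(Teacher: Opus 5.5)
Your proposal is correct and matches the paper's argument: parts (i) and (ii) of Theorems \ref{largecollproof} and \ref{smallcollproof} give $u_n^{(j)}(0)=0$ for $j<k$ and $u_n^{(k)}(0)>0$, Taylor's theorem then gives $C_n=u_n^{(k)}(0)/k!$, and your regularity bootstrap spells out what the paper's one-line proof leaves implicit. One caveat comes from the paper, not from you: part (ii), and hence the corollary, holds only if $s_n$ means the minimal \emph{total} number of reactions ($s_n=1+\min\{s_{n_1}+s_{n_2}: n_1+n_2-\ell=n\}$ for $n\notin\mathcal S_0$), which is what the proof of (ii) needs in order for $A_1$ to be nonempty, and not the minimal reaction-tree depth given by the literal recursive definition of $\mathcal S_k$ --- e.g.\ for $\ell=1$ and $u_n(0)=\delta_{n,2}$ that definition puts $5=3+3-1$ in $\mathcal S_2$, yet (\ref{mainiter}) gives $u_5=\tfrac{11}{4096}(2-t)^{10}t^3$, so $u_5''(0)=0$.
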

\begin{proof}
This is a direct consequence  of Taylor's theorem and Theorems \ref{largecollproof}
and \ref{smallcollproof}.
\end{proof}

We can now establish nonnegativity for small-cluster solutions. Theorem \ref{smalleuthm}
 subsequently follows from Theorem \ref{smallgeneu}.

\begin{theorem} \label{smallpos}
If $u(t)$ is the unique small-cluster solution to (\ref{mainsmall}) then
$u(t)\ge 0$ for $t
\in [0, t_{\mathrm{ex}})$.
\end{theorem}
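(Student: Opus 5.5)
We prove nonnegativity of the analytic solution from Theorem \ref{smallgeneu} on $[0,t_{\mathrm{ex}})$ by combining the short-time asymptotics of Corollary \ref{poscor} with a first-exit-time argument, using the structure of (\ref{mainsmall}). The key structural fact is that the loss term for $u_n$ is linear in $u_n$. Write (\ref{mainsmall}) as
\begin{equation*}
\dot u_n = G_n(u) - u_n H_n(u), \qquad H_n(u) = \frac{2n}{M}\sum_{i=1+\ell-n}^{\ell} iu_i ,
\end{equation*}
where $G_n(u)$ is the gain sum. $G_n$ is a sum of products $u_iu_{n+\ell-i}$ with nonnegative coefficients divided by $M>0$. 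The variable $u_n$ itself appears in $G_n$ only through the $i=\ell$ term ($u_\ell u_n$) and, when $n=\ell$, the term $u_\ell^2$. Those contributions can be moved into the linear-in-$u_n$ coefficient, so that the remaining gain $\tilde G_n$ involves only $u_i$ with $i\neq n$.

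First, split indices into attainable sizes $\mathcal S_\infty\cap\{1,\dots,\ell\}$ and unattainable ones. For $n\notin\mathcal S_\infty$, the first part of Theorem \ref{smallcollproof} gives $u_n^{(j)}(0)=0$ for every $j$. Since the solution is analytic, $u_n\equiv 0$ on $[0,t_{\mathrm{ex}})$ by analytic continuation. Hence these components are trivially nonnegative and can be dropped from the system.

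Second, for $n\in\mathcal S_\infty$, Corollary \ref{poscor} gives $u_n(t)\sim C_nt^{s_n}$ with $C_n>0$. So there is $\delta>0$ such that $u_n(t)>0$ for all such $n$ and all $t\in(0,\delta)$.

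Third, suppose some attainable component becomes nonpositive. Let
\begin{equation*}
\tau=\inf\{t\ge\delta: \min_{n\in\mathcal S_\infty} u_n(t)\le 0\}<t_{\mathrm{ex}}.
\end{equation*}
On $[0,\tau]$ all components are nonnegative, so $\tilde G_n\ge 0$ there. Solving the linear ODE for the component $u_n$ that vanishes at $\tau$ gives
\begin{equation*}
u_n(\tau)=e^{-\int_\delta^\tau \tilde H_n}\,u_n(\delta)+\int_\delta^\tau e^{-\int_s^\tau \tilde H_n}\,\tilde G_n(s)\,ds \;\ge\; e^{-\int_\delta^\tau \tilde H_n}\,u_n(\delta)>0 .
\end{equation*}
Here $\tilde H_n$ is continuous on $[\delta,\tau]$ because $M$ is bounded away from zero on compact subsets of $[0,t_{\mathrm{ex}})$. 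This contradicts $u_n(\tau)=0$, so the attainable components stay strictly positive on $(0,t_{\mathrm{ex}})$.

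The main obstacle is making the third step watertight: $H_n$ and $G_n$ involve $1/M$, and $M$ itself has no definite sign until nonnegativity is known. I would handle this by working on compact intervals $[0,T]\subset[0,t_{\mathrm{ex}})$, where $M\ge c_T>0$ by the definition of $t_{\mathrm{ex}}$ and continuity. The factor $1/M$ is then a bounded positive multiplier, and it does not affect the sign arguments.

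A secondary subtlety is the diagonal gain term $u_\ell u_n$, which must be absorbed into the linear coefficient before claiming $\tilde G_n\ge 0$. Once absorbed, the remaining gain uses only other components, which are nonnegative up to $\tau$ by the definition of $\tau$.
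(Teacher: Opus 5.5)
Your proof is correct and follows the same route as the paper's. Unattainable components vanish identically because all their derivatives at $0$ vanish and the solution is analytic, and attainable components are positive for small $t>0$ by Corollary \ref{poscor}. The paper only asserts that extending nonnegativity to $[0,t_{\mathrm{ex}})$ is ``straightforward'', and your first-exit argument supplies that missing step: the variation-of-constants formula for the loss term, which is linear in $u_n$, together with $M>0$ on compact subsets of $[0,t_{\mathrm{ex}})$.
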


\begin{proof}
We will show nonnegativity for some interval about 0, noting that it is straightforward
to show that the interval
of nonnegativity  may
be extended  to $[0,t_{\mathrm{ex}})$. From Corollary \ref{poscor}, $u_n(t)>0$
if
$n \in \mathcal S_\infty$ for some small time interval.  If $i \notin \mathcal
S_\infty$, then $u_i^{(k)}(0)
= 0$ for all $k \ge 0$. As  $u_i(t)$ is analytic, it must therefore be identically
zero for a small time interval.    \end{proof}

\subsection{An example with three species}

 We now consider a simple   example of clusters with size of at most three.
We set an emission size of $\ell = 3$, and initial conditions of $u_1(0)
= p$, $u_2(0) = q$
and $u_3(0) = r$ with $p+q+r = 1$. This gives
a closed system, limited to monomers $S_1$, dimers $S_2$, and trimers $S_3$.
There are   four permissible reactions:\begin{align}
S_2 + S_2 \rightharpoonup S_{1}, \qquad  S_1 + S_3 \rightharpoonup S_{1},
\qquad  S_3 + S_2 \rightharpoonup S_{2}, \qquad
 S_3 + S_3\rightharpoonup S_{3}.
 \end{align} 
 
\begin{figure}[htbp]
    \centering
    \begin{subfigure}[b]{0.45\textwidth}
        \includegraphics[width=\textwidth]{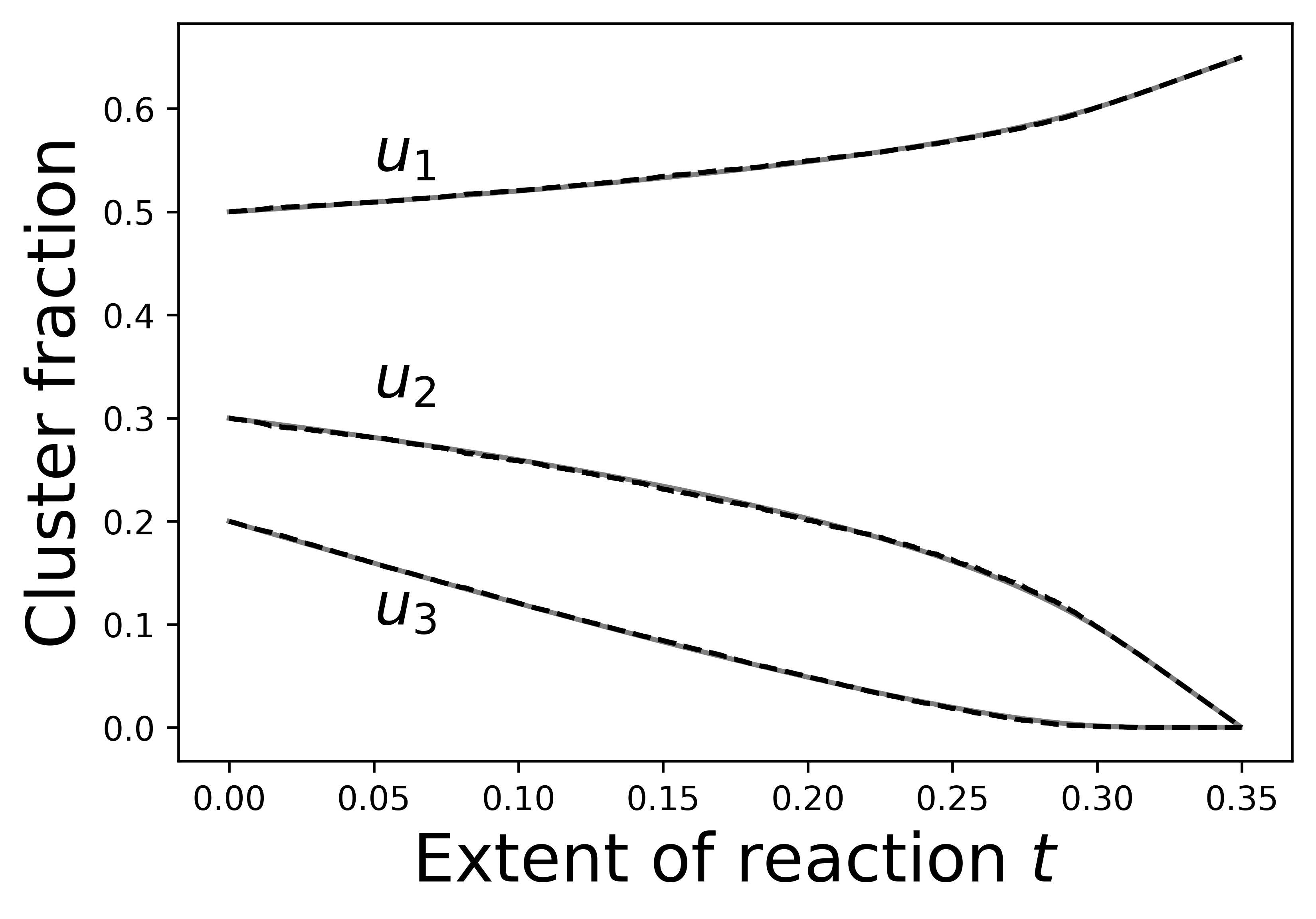}
    \end{subfigure}
    \hfill
    \begin{subfigure}[b]{0.45\textwidth}
        \includegraphics[width=\textwidth]{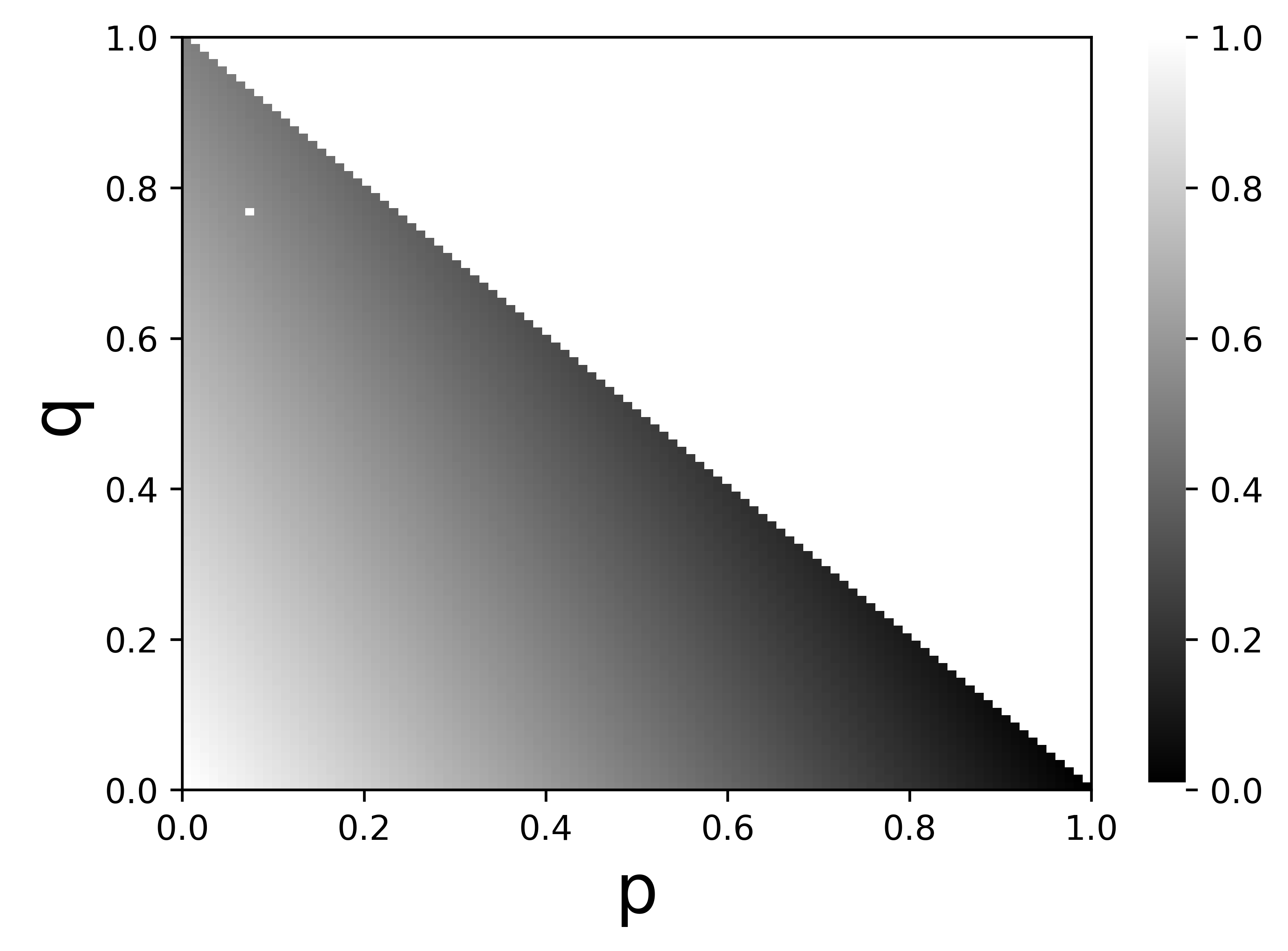}
    \end{subfigure}
    \caption{\textbf{Simulations of the three species system (\ref{threespec}).
\textbf{Left}}: Numerical solutions (transparent solid lines) and the Markov
process (\ref{cadlag}) with $10^5$ initial clusters (dashed lines) of cluster
fractions
for initial conditions $u_1(0) = 0.5, u_2(0) = 0.3, u_3(0) = 0.2$. \textbf{Right}:
A heat map of exhaustion times for (\ref{threespec}) with initial fractions
$u_1(0) = p$ and $u_2(0) = q$ with $p+q \le 1$.\label{threespecpic} }
\end{figure}

  From (\ref{mainsmall}), the closed   system is 
\begin{align}
\dot u_1 &= \frac{4u_2^2}{M}, \qquad
\dot u_2 = \frac{-8u_2^2}{M}, \qquad
\dot u_3 = \frac{-6u_1u_3-12u_2u_3-9u_3^2}{M} ,\label{threespec}
\end{align}
with
\begin{equation}
M = 4u_2^2+6u_1u_3+12u_2u_3+9u_3^2. 
\end{equation}

When $r = 0$,  only dimers can react, and so we can easily verify that $u_1(t)
=  p+t,$ $u_2(t) = 1-p-2t$, and $u_3 \equiv 0$ is a solution for $t \in [0,(1-p)/2)$.
When $q = 0$, monomers are conserved, and we obtain $u_1(t) \equiv p, u_2(t)
\equiv 0,  u_3(t) = r-t$ for $t \in [0, 1-p)$. At these exhaustion times,
 $u_2,u_3$, and subsequently $M$ vanish.

We can use moments to help simplify the system when both $q$ and $r$ are
positive.  The zeroth and first moments are
\begin{align} 
m_0(t) = \sum_{i = 1}^3 u_i(t) =  1-t, \qquad  
m_1(t) = \sum_{i = 1}^3 iu_i(t) = 3-2p-q-3t. \label{threecon}
\end{align}
From these two relations, we can express $M$
as a function of  only $u_2$ and $t$ by  observing
\begin{equation}
M = (3-2p-q-3t)^{2}-u_1^2-4u_1u_2, \qquad u_1= (q+2p-u_2)/2.
\end{equation}
We can then solve for $u_2$ numerically, and directly obtain $u_1$ and $u_3$
from (\ref{threecon}).  Since $u_1$ is nondecreasing, there will always be
a positive
fraction of monomers at all positive times. However, we now show that for
 $p,q \in (0,1)$, dimers and trimers
will  simultaneously exhaust at $t_{\mathrm{ex}}$. \begin{theorem} (Simultaneous
exhaustion of dimers and trimers)
For $p,q \in (0,1)$ both $u_2(t)$ and $u_3(t)$ are positive for 
$t\in [0, t_{\mathrm{ex}})$.  \end{theorem}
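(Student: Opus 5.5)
The plan is to use the fact that in (\ref{threespec}) both $\dot u_2$ and $\dot u_3$ are proportional to the species itself, so each equation can be solved in closed form along the trajectory. I assume $r = 1-p-q>0$, which is the case of interest: if $r=0$, then $u_3\equiv 0$ by the explicit solution given above. By Theorems \ref{smallgeneu} and \ref{smallpos}, the solution exists on $[0,t_{\mathrm{ex}})$, is analytic and is nonnegative there.

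Fix any $t<t_{\mathrm{ex}}$. By the definition of $t_{\mathrm{ex}}$, $M(s)>0$ for $s\in[0,t]$. Since $M$ is continuous, $\delta_t:=\min_{[0,t]}M>0$. Moreover $0\le u_i\le 1$ (because $m_0 = 1-s\le 1$ and $u_i\ge0$), so all numerators in (\ref{threespec}) are bounded on $[0,t]$.

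For dimers, the equation $\dot u_2=-8u_2^2/M$ is of Riccati/Bernoulli type. As long as $u_2>0$ we have $\frac{d}{ds}(1/u_2)=8/M(s)$, which gives
\begin{equation*}
u_2(s)=\frac{q}{1+8q\int_0^s M(\tau)^{-1}\,d\tau}.
\end{equation*}
This formula defines a positive function on $[0,t]$, since the integral is at most $s/\delta_t<\infty$, and it solves the same ODE with the same initial value. By uniqueness of solutions of $\dot u_2 = -8u_2^2/M(s)$ with $M(s)$ given and continuous, it coincides with $u_2$, so $u_2>0$ on $[0,t]$.

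For trimers, $\dot u_3 = -u_3\,g(s)/M(s)$ with $g=6u_1+12u_2+9u_3$ continuous and bounded by $27$. This is linear in $u_3$, so
\begin{equation*}
u_3(s)=r\exp\!\Big(-\int_0^s \frac{g(\tau)}{M(\tau)}\,d\tau\Big)\ge r\,e^{-27t/\delta_t}>0 .
\end{equation*}
Because $t<t_{\mathrm{ex}}$ was arbitrary, both $u_2$ and $u_3$ are positive on $[0,t_{\mathrm{ex}})$.

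To justify the word ``simultaneous'', I would also check what happens at $t_{\mathrm{ex}}$. Since $\dot u_1\ge0$, we have $u_1\ge p>0$. Every term of $M=4u_2^2+6u_1u_3+12u_2u_3+9u_3^2$ is nonnegative, so $M\ge 4u_2^2$ and $M\ge 6p\,u_3$. Hence $M(s)\to0$ as $s\to t_{\mathrm{ex}}^-$ forces both $u_2\to0$ and $u_3\to0$ at the same time; neither species can reach zero before the other. The only delicate point is that everything hinges on $M$ being bounded away from zero on compact subintervals of $[0,t_{\mathrm{ex}})$, which is exactly what the definition of $t_{\mathrm{ex}}$ gives. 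Beyond that, the argument consists of the two explicit integrating-factor formulas.
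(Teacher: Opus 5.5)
Your proof is correct, but it takes a different route from the paper's.

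\textbf{The paper's argument.} It argues by contradiction. It supposes a first time $s^*<t_{\mathrm{ex}}$ with $u_2(s^*)=0$ (the case of $u_3$ is handled the same way). It then notes that the solution started from $(u_1(s^*),0,u_3(s^*))$ stays on the face $\{u_2=0\}$. Uniqueness for (\ref{threespec}) near $s^*$, available because $M(s^*)>0$, forces the original trajectory onto that face just before $s^*$. This contradicts the choice of $s^*$.

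\textbf{Your argument.} You instead treat $M(s)$ as a known continuous coefficient bounded below by $\delta_t$ on $[0,t]$. You then integrate the Bernoulli equation for $u_2$ and the linear equation for $u_3$ explicitly.

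\textbf{Common mechanism.} Both proofs rest on the same structural fact: $\dot u_2$ is divisible by $u_2$ and $\dot u_3$ by $u_3$, so the faces $\{u_2=0\}$ and $\{u_3=0\}$ are invariant.

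\textbf{What each buys.} The paper's version is shorter and applies verbatim to any component with this divisibility property. However, it relies on backward uniqueness through a constructed second solution, and its write-up of $\tilde u$ is loose: the time shift is muddled, and $\tilde u_1$ should be the constant $u_1(s^*)$. Your version is fully elementary. It uses uniqueness only for a scalar equation, and not at all for $u_3$, where the integrating-factor identity gives the formula directly. It also yields explicit lower bounds on $[0,t]$:
\[
u_2\ge \frac{q}{1+8qt/\delta_t}, \qquad u_3\ge r\,e^{-27t/\delta_t}.
\]

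\textbf{What you add beyond the paper.} First, you make explicit the hypothesis $r>0$ (i.e.\ $p+q<1$). Without it the statement as written fails, since $p+q=1$ gives $u_3\equiv0$. Second, you show via $u_1\ge p$ and $M\ge\max(4u_2^2,6pu_3)$ that both species vanish together as $t\to t_{\mathrm{ex}}^-$. This actually justifies the word ``simultaneous,'' which the paper's proof leaves implicit.
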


\begin{proof}
 For the sake of contradiction, suppose there is a first time $s^*< t_{\mathrm{ex}}$
for which $u_2(s^*) = 0$ (the proof for assuming $u_3$ exhausts first is
similar). Let  $u(t)$
  denote the unique solution of (\ref{threespec})  for the time interval
$[0,t_\mathrm{ex})$.
  We can construct a second distinct solution $\tilde u(t)$ with initial
conditions
\begin{equation}
\tilde u(s^*) = (u_1(s^*),0,u_3(s^*)).
\end{equation}
In an interval centered at 0, this has a solution of $\tilde u_2 \equiv 0$,
$\tilde u_1(t) \equiv s^* $ and $\tilde u_3(t) = u_3(s^*)-t$. $
$This contradicts uniqueness of solutions. Thus both $u_2$ and $u_3$ must
remain positive in $[0,t_{\mathrm{ex}})$.   
\end{proof}

In Fig. \ref{threespecpic} we show agreement of solutions of (\ref{threespec})
with a trajectory of $10^5$ initial particles and initial conditions of $(p,q,r)
= (.5, .3, .2)$.  We also show a heat map of exhaustion times over all possible
initial conditions. We note that for certain values of $(p,q)$,   (\ref{threespec})
is stiff, and an implicit Runge Kutta method (Radau IIA family of order 5)
\cite{hairer1991ii} is used to ensure stability when computing numerical
solutions.

\section{Existence and uniqueness of large-cluster solutions} \label{sec:largeeu}

In this section we provide a proof for Theorem  \ref{largeeuthm}.  This is
done in four parts.  The first part constructs a truncated version of  (\ref{realmaineqn})
which restrict products of  (\ref{mainreact}) to have at most $N$ particles.
 These systems maintain the constant loss rates of total cluster particle
given in (\ref{m0loss})-(\ref{m1loss}).  In the second part, we find uniform
monomial growth bounds for cluster fractions.  In the third part, we use
the uniform bounds and a diagonalization argument similar to the proof of
the Helly's selection theorem to construct a convergent subsequence of cluster
fractions whose limit solves (\ref{realmaineqn}).  Finally, in the fourth
part we show that all solutions satisfying the convergence criteria can be
written as an iterative sequence of integral equations, proving uniqueness.

(\textit{I: truncated equations}). Suppose we are interested in a solution
of $(\ref{realmaineqn}) $ with  initial conditions of $u^0 = (u_{\ell+1}^0,
u_{\ell+2}^0,
\dots)$ such that $\sum_{i \ge \ell+1 }u_i^{0} = 1$ and $u_i^0 = 0$ for $i
>L$. For each $N\ge 2L + \ell$, we consider the following set of truncated
equations:
\begin{align}
\dot u_n^{(N)} &=\frac{1}{M^{(N)}}\left[ \sum_{k = \ell+1}^{n-1} k(n-k+\ell)
\ufin{k}{N} \ufin{n-k+\ell}{N} - 2n\ufin{n}{N}\sum_{k = \ell+1}^{N-n+\ell}
 ku_k \right], \quad n = \ell+2, \dots, N-1, \label{ufineqell}\\
\dot u_{\ell+1}^{(N)} &= -\frac{2(\ell+1)u_{\ell+1}}{M^{(N)}}\sum_{k = \ell+1}^{N-1}ku_k^{(N)},\\
\dot u_N^{(N)} &= \frac{1}{M^{(N)}}\sum_{k = \ell+1}^{N-1} k(N-k+\ell) \ufin{k}{N}
\ufin{n-k+\ell}{N}. \\
u^{(N)}_n(0) &= u_n^{0}, \qquad   \ell+1 \le n \le L, \\ u^{(N)}_n(0) &=
0,  \qquad n>L,
\label{unfinspec}
\end{align}
where the total probability corresponding to permissible reactions is 
\begin{equation}
        M^{(N)} = \sum_{j = \ell+1}^{N-1}\sum_{k = \ell+1}^{N-j+\ell} jk\ufin{j}{N}
 \ufin{k}{N}.
\end{equation}
For $i\ge 0$, the $i$th moments is given by 
\begin{equation}
m^{(N)}_i(t) = \sum_{n = 1}^N n^i \ufint{n}{N}. 
\end{equation}

The truncated system has many of the same properties as the infinite system.

\begin{theorem}
Let $T_1= \ell/(2(\ell+1))$. For $t \in  [0,T_{1}],$  
\begin{enumerate}        
        \item The truncated system (\ref{ufineqell}) has a unique solution
$u^{(N)}(t)$ for $t \in [0,T_{1}]$.        \item $u_i^{(N)}(t)>0$
for $i \in \mathcal S_\infty$ and $u_i^{(N)}(t) = 0$  for $i \notin \mathcal
S_\infty$. 
\item  The truncated equations have the same loss rates for the zeroth
and first moments: 
\begin{equation}
m_0^{(N)}(t) = 1-t, \qquad m_1^{(N)}(t) = m_1(0)-\ell t. \label{finmom}
\end{equation}
\item $M^{(N)}(t)\ge 1.$ 
\end{enumerate}
\end{theorem}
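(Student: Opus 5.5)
The plan is to prove the four items together by a continuation argument on $[0,T_1]$. Items 1–3 follow directly from the finite-dimensional structure of the truncated system. Item 4 is the quantitative input that prevents the vector field from becoming singular.

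\emph{Step 1 (local solution and support).} The truncated system (\ref{ufineqell})–(\ref{unfinspec}) is a finite system $\dot u^{(N)}=F(u^{(N)})$. Its right-hand side is a rational function that is analytic wherever $M^{(N)}\neq 0$. At $t=0$ all sizes are at most $L$ and $N\ge 2L+\ell$, so every pair is permissible and $M^{(N)}(0)=m_1(0)^2\ge(\ell+1)^2>1$. Exactly as in Theorem \ref{smallgeneu}, Cauchy–Kovalevskaya gives a unique analytic solution on a maximal interval $[0,T^*)$. This solution can be continued as long as $M^{(N)}$ stays bounded away from $0$, because $|\dot u^{(N)}_n|$ is then bounded. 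For item 2, I would rerun the reaction-class induction of Theorem \ref{largecollproof}(i)–(ii), restricted to sizes $\le N$; only nonnegative terms appear in the gain sums. This gives $u_n^{(N)}(t)\sim C_nt^{s_n}$ for $n\in\mathcal S_\infty$ and $u_n^{(N)}\equiv 0$ otherwise. The latter uses analyticity, as in Theorem \ref{smallpos}. Strict positivity for all later times then comes from writing
\[
u_n^{(N)}(t)=e^{-\int_s^t a_n}\,u_n^{(N)}(s)+\int_s^t e^{-\int_r^t a_n}\,(\text{gain})\,dr,
\]
where $a_n\ge0$ is the (bounded) loss coefficient. Gains are nonnegative, so a positive fraction can never return to zero.

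\emph{Step 2 (moments, item 3).} The system is finite, so summation and differentiation commute, and I would repeat the reindexing of Theorem \ref{momentloss} on the finite index set. The truncation is built so that $N$-clusters are inert: they do not appear in any loss term, and products are capped at size $N$. Consequently each permitted pair $(j,k)$ with $j+k\le N+\ell$ appears once as a gain and twice as a loss, weighted by $jk\,u_ju_k/M^{(N)}$. Summing gives $\dot m_0^{(N)}=(M^{(N)}-2M^{(N)})/M^{(N)}=-1$. Multiplying by $n$ gives $\dot m_1^{(N)}=\big(\sum jk(j+k-\ell)u_ju_k-\sum jk(j+k)u_ju_k\big)/M^{(N)}=-\ell$. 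No convergence issues arise, which yields (\ref{finmom}).

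\emph{Step 3 (lower bound $M^{(N)}\ge 1$).} This is the main obstacle, and it is what fixes $T_1=\ell/(2(\ell+1))$. The idea is that non-permissible pairs can only involve clusters of size $>(N+\ell)/2$. Any two clusters of size $\le (N+\ell)/2$ react, and each such cluster has size at least $\ell+1$. So if $c(t)$ denotes the number fraction of clusters of size $\le (N+\ell)/2$, then
\[
M^{(N)}\ \ge\ (\ell+1)^2\,c(t)^2 .
\]
Let $b(t)$ be the fraction of clusters of size $>(N+\ell)/2$, so that $c=m_0^{(N)}-b=1-t-b$. Using $m_0^{(N)}=1-t\ge(\ell+2)/(2(\ell+1))$ on $[0,T_1]$, it suffices to show
\[
b(t)\le \frac{\ell}{2(\ell+1)}, \quad\text{since then}\quad c\ge \frac{\ell+2}{2(\ell+1)}-\frac{\ell}{2(\ell+1)}=\frac{1}{\ell+1}.
\]
To bound $b$, I would first try counting reactions. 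Initially $b(0)=0$, since $L<(N+\ell)/2$. Each reaction creates at most one new cluster of either type. The total number of reactions by time $t$ is $t$ (from $\dot m_0=-1$), so $b(t)\le t\le T_1$. This already gives the needed bound, and it explains the form of $T_1$. If this counting turns out to be off (for example, because a large product might be counted twice), the fallback is the mass bound $b\le m_1^{(N)}/((N+\ell)/2)$ combined with the choice $N\ge 2L+\ell$.

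\emph{Step 4 (closing the argument).} Finally I would close with a bootstrap. Let $\tau$ be the supremum of times in $[0,\min(T^*,T_1)]$ on which $M^{(N)}\ge 1$ and nonnegativity holds. On $[0,\tau]$ Steps 2–3 apply and give $M^{(N)}\ge 1$ with a margin independent of $\tau$. So the solution extends past $\tau$ unless $\tau=T_1$, and items 1–4 hold on all of $[0,T_1]$.
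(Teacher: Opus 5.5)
Your proof is correct and reaches exactly the paper's threshold $T_1$, but you organize the key estimate for item 4 differently. The paper works only with the initial support, sizes $\ell+1,\dots,L$, which are mutually reactive since $2L\le N+\ell$. It bounds their total loss rate directly: summing the loss terms over $i\le L$ gives at most $2M^{(N)}/M^{(N)}=2$. Hence $\sum_{i=\ell+1}^L u_i^{(N)}\ge 1-2t$ and $M^{(N)}\ge(\ell+1)^2(1-2t)^2\ge 1$. You instead use the larger reactive class of sizes at most $(N+\ell)/2$. You obtain the same bound $c\ge 1-2t$ by writing $c=m_0^{(N)}-b$ and combining the exact law $m_0^{(N)}=1-t$ of item 3 with the creation-rate bound $\dot b\le 1$ for the non-reactive tail. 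The paper's route is more self-contained: it uses only that gains are nonnegative and that losses total at most $2$, and needs neither item 3 nor the exact gain total. Yours shows where $T_1$ comes from: one unit of cluster loss plus at most one unit of tail creation per unit time. Your Step 4 also supplies something the paper leaves implicit. Existence on all of $[0,T_1]$ (item 1) needs $M^{(N)}$ to stay away from zero, while the lower bound on $M^{(N)}$ presupposes existence and nonnegativity there. Your bootstrap closes that loop: the integrating-factor representation keeps the $\mathcal S_\infty$-components positive, and analyticity keeps the others identically zero.

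Two small points. First, the counting in Step 3 needs no hedge. Summing the gain terms over all $n\le N$ gives exactly $M^{(N)}/M^{(N)}=1$, since each ordered permissible pair feeds exactly one product size. The loss terms are nonpositive under nonnegativity, so $\dot b\le 1$ and $b(t)\le t$. Your fallback would in fact not work for all $N\ge 2L+\ell$. At $N=2L+\ell$ it only yields $b\le m_1(0)/(L+\ell)$, which for monodisperse $L$-mers equals $L/(L+\ell)>1/2>\ell/(2(\ell+1))$. Second, as your Step 1 correctly shows, strict positivity for $i\in\mathcal S_\infty\setminus\mathcal S_0$ holds only for $t>0$. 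Item 2 should therefore be read on $(0,T_1]$.
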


\begin{proof}
 The proofs for parts (1) and (2) are largely similar to those establishing
 the well-posedness and nonnegativity for small-cluster systems, and so we
omit them.
The moment equations (\ref{finmom}) can be shown by the  reindexing argument
in Theorem \ref{momentloss}.
To show (4), we note that the smallest cluster has $\ell+1$ particles.  At
$t = 0$, all clusters can interact since $N>2L$ , and so 
\begin{equation}
M^{(N)}(0) = m_1(0)^2\ge (\ell+1)^2.
\end{equation}
It can be directly verified that  
\begin{equation}
\sum_{i
= \ell+1}^L\dot u^{(N)}_i(t)\ge  
-2 \qquad   \Rightarrow  \qquad \sum_{i
= \ell+1}^L u^{(N)}_i(t) \ge 1-2t.
\end{equation}
   It then follows that 
\begin{align}
M^{(N)}(t) &\ge \sum_{i = \ell+1}^L iju_i^{(N)}(t) u_j^{(N)}(t) = \left(\sum_{i
= \ell+1}^L
iu_i^{(N)}(t)\right)^2  \ge (\ell+1)^2\left(\sum_{i = \ell+1}^L
u_i^{(N)}(t)\right)^2  \\
&\ge (\ell+1)^2\left(1-2t\right)^2\ge 1 \quad \hbox{ for } t\le T_{1}.
 \end{align}
\end{proof}

(\textit{II: uniform bounds}). Now that we have a uniform time interval for
which all $u_n^{(N)}$ exist, we look to
establish bounds for  $u_i^{(N)}(t)$ and $iu_i^{(N)}(t)$ which
are also independent of $N$.  Toward this, we define a ``Catalan-like'' sequence
\begin{align}
         A_{2L+\ell} &= 2L+ \ell, 
     \\\frac{A_{n+1}}{n+
        1} &= \frac {2L} { n+1-\ell  -2L}\sum_{k = \ell +1 }^n A_kA_{n-k+\ell+1},
\qquad n > 2L +\ell.
 \label{almostcat}
\end{align}

\begin{theorem}
For $t \in [0, T_1]$, 
\begin{equation}
        u_i^{(N)}(t) \le \frac{A_i}{i} t^{ ((i-\ell)/2L)  -1}, \qquad i \ge
2L+ \ell. \label{indubnd} 
\end{equation}
\end{theorem}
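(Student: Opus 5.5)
Let $\alpha_i := (i-\ell)/(2L) - 1$ denote the exponent in (\ref{indubnd}). The plan is strong induction on $i \ge 2L+\ell$, with the bound for index $n+1$ obtained by integrating the truncated equation (\ref{ufineqell}) against an integrating factor built from the loss term. Two a priori facts from the previous theorem will be used throughout: $u^{(N)}_k \ge 0$, and $M^{(N)}(t)\ge 1$ on $[0,T_1]$. Positivity lets us drop the loss term and obtain the differential inequality
\begin{equation*}
\dot u^{(N)}_{n+1} \le \frac{1}{M^{(N)}}\sum_{k=\ell+1}^{n} k(n+1-k+\ell)\,u^{(N)}_k u^{(N)}_{n+1-k+\ell}.
\end{equation*}
This suffices provided the exponents are arranged so that integration produces the factor $\frac{2L}{n+1-\ell-2L}$ appearing in (\ref{almostcat}). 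Note that $(n+1-\ell)/(2L) - 1 = (n+1-\ell-2L)/(2L)$, so integrating $t^{\alpha_{n+1}-1}$ gives exactly $t^{\alpha_{n+1}}\cdot 2L/(n+1-\ell-2L)$. The job is therefore to show that each product $u_k u_{n+1-k+\ell}$ is bounded by $\frac{A_k A_{n+1-k+\ell}}{k(n+1-k+\ell)}\,t^{\alpha_{n+1}-1}$, so that the weights $k(n+1-k+\ell)$ cancel.

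For the base case $i=2L+\ell$ the claim reads $u_{2L+\ell}\le t^0 = 1$. This holds because $m_0^{(N)}(t)=1-t\le 1$ and all fractions are nonnegative.

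For the inductive step, the exponents add as
\begin{equation*}
\alpha_k + \alpha_{n+1-k+\ell} = \frac{n+1-\ell}{2L} - 2 = \alpha_{n+1}-1,
\end{equation*}
which is exactly what is needed. The difficulty is that indices $k<2L+\ell$, and likewise partners $n+1-k+\ell<2L+\ell$, are not covered by the inductive hypothesis. The sum in (\ref{almostcat}) nonetheless runs over all $k\ge \ell+1$, so $A_k$ must be given a meaning for $\ell+1\le k<2L+\ell$. I would set $A_k := k$ for those indices, so that the claimed bound becomes $u_k \le t^{\alpha_k}$ with $\alpha_k\in[-1+1/(2L),0)$. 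For $t\le T_1<1$ this bound follows from $u_k\le 1$, since a negative power of $t$ is at least $1$ there. With this convention the bound $u_k \le \frac{A_k}{k}t^{\alpha_k}$ holds for every $k\ge\ell+1$, and the induction closes:
\begin{equation*}
u_{n+1}(t) \le \int_0^t \frac{1}{n+1}\cdot\frac{n+1}{1}\sum_k A_kA_{n+1-k+\ell}\,s^{\alpha_{n+1}-1}\,ds.
\end{equation*}
Here the factor $k(n+1-k+\ell)/(k(n+1-k+\ell))$ has cancelled inside the sum, and the remaining constant is consistent with (\ref{almostcat}) multiplied by $1/(n+1)$.

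The main obstacle is integrability at $s=0$: the step requires $\alpha_{n+1}-1>-1$, i.e.\ $\alpha_{n+1}>0$, i.e.\ $n+1>2L+\ell$, which is exactly the range in which the recursion is used. The vanishing of $u_{n+1}(0)$ for $n+1>L$ supplies the zero initial condition needed for the integral form. I would also check that the small-index convention $A_k=k$ is compatible with the paper's later use of $A$. If it is not, the fallback is to bound the small-index factors directly by $m_0\le1$ or by $m_1\le m_1(0)$, and absorb the resulting constants into the recursion.
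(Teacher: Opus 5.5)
Your proposal is correct and follows the same route as the paper. The paper also runs strong induction from the base case $i=2L+\ell$, where the bound reads $u_i^{(N)}\le 1$, drops the loss term by nonnegativity and the factor $1/M^{(N)}$ via $M^{(N)}\ge 1$, substitutes the hypothesis so the exponents add to $\alpha_{n+1}-1>-1$, and integrates from $u_{n+1}^{(N)}(0)=0$.

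Your convention $A_k=k$ for $\ell+1\le k<2L+\ell$, justified by $u_k^{(N)}\le 1\le t^{\alpha_k}$ on $(0,T_1]$, together with starting the recursion at $n=2L+\ell$, makes explicit what the paper's proof uses silently: its sum over $k\ge\ell+1$ invokes $A_k$ at indices, and for $i=2L+\ell+1$ a recursion step, that the paper never defines.
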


\begin{proof}
The base case holds  since $i = 2L+\ell $  in  (\ref{indubnd}) reduces to
$u_i^{(N)}(t) \le 1$.
To establish the inductive step, we substitute the hypothesis into (\ref{ufineqell})
to give
\begin{align}
        \dot u_{n+1}^{(N)} &\le   \sum_{k = \ell+1}^{n} k(n-k+\ell+1)
\ufin{k}{N} \ufin{n-k+\ell+1}{N}\\
        &\le  \sum_{k = \ell+1}^n A_kA_{n-k+\ell+1} t^{ ((n+1-\ell)/2L) -2}.
\end{align}
Since  $u_{n+1}^{(N)}(0) = 0$ for $n>\ell+L$, integrating the inequality
gives
\begin{align}
        u_{n+1}^{(N)}(t) &\le \frac {1} { (n+1-\ell)/2L  -1}\sum_{k = \ell
+1 }^n A_kA_{n-k+\ell+1} t^{ ((n+1-\ell)/2L)  -1} \\&= \frac {2L} { n+1-\ell-2L}\sum_{k
= \ell +1 }^n A_kA_{n-k+\ell+1} t^{ ((n+1-\ell)/2L)  -1} \\&=
        \frac{A_{n+1}}{n+
        1} t^{ ((n-\ell+1)/2L) -1},
\end{align}
which establishes the induction step.
\end{proof}

Summing over cluster sizes, we now observe that 
\begin{equation}
        \sum_{n =2 L+\ell}^N n\ufint{n}{N} \le \sum_{n =2 L+\ell}^\infty
A_n
t^{ ((n-\ell)/2L)  -1}, \label{sumcbound}
\end{equation}
noting that the right hand side is independent of $N$.
We now show this series converges for some small time.

\begin{theorem}
There is a positive time $T_2 \in (0,T_1]$ such that the sequence $\sum_{n
=2 L+\ell}^\infty A_n
t^{ ((n-\ell)/2L)  -1}$ converges for $t\le T_2$.\end{theorem}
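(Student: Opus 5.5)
The plan is to show that the ``Catalan-like'' sequence (\ref{almostcat}) grows at most geometrically, say $A_n \le C R^{n}$ for all $n \ge 2L+\ell$ with constants $C,R>0$ independent of $N$. The series is then dominated by a geometric series:
\begin{equation*}
\sum_{n\ge 2L+\ell} A_n t^{((n-\ell)/2L)-1} \le C R^{\ell} t^{-1}\sum_{n\ge 2L+\ell}\left(R\,t^{1/2L}\right)^{n-\ell}.
\end{equation*}
This converges for $0<t<R^{-2L}$. (For $t$ bounded away from $0$ the prefactor $t^{-1}$ is harmless, and each term with $n\ge 2L+\ell$ has exponent $\ge 0$.) So one may take $T_2=\min(T_1, R^{-2L}/2)$.

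First, I would make sense of the terms $A_k$ with $\ell+1\le k<2L+\ell$ that appear in the convolution in (\ref{almostcat}). Since $u_k^{(N)}\le 1$, the natural convention, consistent with how the induction in (\ref{indubnd}) uses them, is $A_k = k$ for these indices. Next, I would bound the prefactor uniformly: for $n>2L+\ell$ the quantity $(n+1)\cdot 2L/(n+1-\ell-2L)$ is decreasing in $n$. Its maximum is attained at $n=2L+\ell+1$, where it equals $2L(2L+\ell+2)$, and it tends to $2L$ as $n\to\infty$. Hence there is a constant $c$, depending only on $L$ and $\ell$, with
\begin{equation*}
A_{n+1}\le c\sum_{k=\ell+1}^{n}A_kA_{n+1+\ell-k}.
\end{equation*}
Shifting indices via $B_m = A_{m+\ell}$ for $m\ge1$ turns this into the Catalan-type inequality $B_{m+1}\le c\sum_{k=1}^{m}B_kB_{m+1-k}$ for $m\ge 2L$.

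The key step is a majorant argument by generating functions. Let $P(x)=\sum_{m=1}^{2L} B_m x^m$ be the polynomial of initial terms, which has nonnegative coefficients and $P(0)=0$. Consider the power series
\begin{equation*}
g(x)=\frac{1-\sqrt{1-4cP(x)}}{2c},
\end{equation*}
which solves $g = P + c g^2$. Its coefficients $b_m$ are nonnegative and satisfy $b_m = B_m$ for $m\le 2L$. For $m\ge 2L$ they satisfy $b_{m+1} \ge c\sum_{k=1}^m b_k b_{m+1-k}$, with equality once $m+1>2L$. Strong induction on $m$ then gives $B_m\le b_m$ for all $m$. Since $4cP(x)<1$ for $|x|$ small, $g$ is analytic in a disc of positive radius $\rho$. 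Cauchy's estimates then give $b_m\le C R^m$ for any $R>1/\rho$, and hence $A_n\le C R^{n-\ell}$.

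I expect the main obstacle to be bookkeeping rather than analysis. One must verify that the convolution in (\ref{almostcat}) only draws on terms already defined, with the correct initial values. One must also check that the inductive comparison $B_m\le b_m$ survives the fact that the initial segment of $B$ is prescribed rather than generated by the recurrence. If the generating-function route becomes awkward, I would fall back on a direct induction of the form $B_m\le K R^m m^{-3/2}$, using the standard Catalan estimate $\sum_{k=1}^{m-1} k^{-3/2}(m-k)^{-3/2}\le C' m^{-3/2}$ and choosing $R$ large enough to absorb the initial terms.
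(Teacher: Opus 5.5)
Your proof is correct, and it takes a different route from the paper's. The paper proves the explicit bound $A_n\le(2L+\ell)^3\gamma^{((n-\ell)/2L)-1}/n^2$, with $\gamma=14(2L+\ell+2)(2L+\ell)^3$, by direct induction. It uses the consequence $\sum_{k=1}^{n}k^{-2}(n+1-k)^{-2}<14/(n+1)^2$ of Villarino's inequality (\ref{catsum1}) and reads off $T_2=1/\gamma$. This is essentially your fallback, with $n^{-2}$ in place of $n^{-3/2}$, and its advantage is an explicit $T_2$.

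Your majorant $g=P+cg^2$ gives a less explicit rate; $1/R$ may be taken just below the smallest positive root of $4cP(x)=1$. Its advantage is that it is insensitive to the prescribed initial segment, which is exactly where the paper's induction is fragile. The paper's inductive step sums only over $k\ge 2L+\ell$, yet applies (\ref{abound}) to the partner index $n-k+\ell+1$ even when that index drops to $\ell+1$. It also omits the factor $2L$ from (\ref{almostcat}).

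Under your convention $A_k=k$ for $\ell+1\le k<2L+\ell$ (the one that makes (\ref{indubnd}) valid on all of $[0,T_1]$), (\ref{abound}) cannot hold for large $n$. The terms $k=\ell+1$ and $k=n$ alone give $A_{n+1}>4L(\ell+1)A_n$, a rate exceeding $\gamma^{1/2L}$. The paper's scheme can be repaired by defining those low-index $A_k$ as the right side of (\ref{abound}), with $\gamma$ enlarged by $2L$. That choice is compatible with (\ref{indubnd}) for $t\le 1/\gamma$, but in general not on all of $[0,T_1]$.

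Taking $T_2$ strictly below $R^{-2L}$, as you do, also gives geometric tails uniformly on $[0,T_2]$. The later step (\ref{fatou}) needs this for higher moments, whereas at the paper's endpoint $t=1/\gamma$ the bound on $\sum n^j u_n$ diverges for $j\ge 2$.

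Two slips in your write-up, neither of which affects the argument:
\begin{itemize}
\item In general $b_m\neq B_m$ for $2\le m\le 2L$; for example $b_2=B_2+cB_1^2$. You only have $b_m\ge B_m$, which is all the comparison uses.
\item Over the range you actually use ($n\ge 2L+\ell$), the prefactor's maximum is $2L(2L+\ell+1)$, attained at $n=2L+\ell$. Your $c=2L(2L+\ell+2)$ is still a valid uniform upper bound.
\end{itemize}
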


\begin{proof}

 Through induction on $n \ge 2L+\ell$, we will show
\begin{equation}
        A_n \le (2L+\ell)^3\gamma^{((n-\ell)/2L)-1}/n^{2},  \quad n \ge 2L+\ell,
\label{abound}
\end{equation}
where
\begin{equation}
\gamma = 14(2L+\ell+2)(2L+\ell)^3.
\end{equation}
We check that $A_{2L+\ell} = 2L+\ell$
satisfies the inequality by inspection. To show the inductive step, we'll
use an inequality associated with proving a positive radius of convergence
for the Catalan generating function,  shown by Villarino  \cite{villarino2016convergence}:
\begin{equation}
\frac{2}{n^2} + \sum_{k = 1}^{n-1} \frac{1}{k^2(n-k)^2} < \frac{6}{(n+1)^2},
\label{catsum1} \qquad \hbox{ for } n\ge 37.
\end{equation}
From an exhaustive listing of the left hand side of (\ref{catsum1}) for
 $n<37$, we can show that this implies 
\begin{equation}
 \sum_{k = 1}^{n} \frac{1}{k^2(n+1-k)^2}  < \frac{14}{(n+1)^2},
\qquad \hbox{ for } n\ge 2.
\end{equation}For  $n>2L+\ell$, we substitute (\ref{abound})
into (\ref{almostcat}) and note that $(n+1)/(n+1-\ell-2L)\le 2L+\ell+2$
for $n\ge \ell +2L+1$ to obtain 
\begin{align}
        A_{n+1}\le \frac{n+1}{n+1-\ell-2L}\sum_{k =2 L+\ell}^{n} \frac{(2L+\ell)^6\gamma^{((n+1-\ell)/2L)-2}}{k^{2}(n-k+\ell+1)^{2}}
    \\
        \le (2L+\ell+2)(2L+\ell)^6 \gamma^{((n+1-\ell)/2L)-2} \sum_{k =2
L+\ell}^{n}
\frac{1}{k^{2}(n+1-k)^{2}} \\
 <(2L+\ell)^3  [14(2L+\ell+2)(2L+\ell)^3/\gamma]\gamma^{((n+1-\ell)/2L)-1}
/(n+1)^2  \\=    (2L+\ell)^3\frac{\gamma^{((n+1-\ell)/2L)-1}}{(n+1)^2}.
\end{align}
which establishes the inductive step. From the Cauchy Hadamard theorem, it
follows that the power series $\sum_{n=2 L+\ell}^\infty A_n t^{ ((n-\ell)/2L)
 -1}$ converges
for $|t| \le 1/\gamma:= T_2$.   

\end{proof}

(\textit{III: Uniform convergence}). For fixed $n \ge \ell +1$, we can establish
equicontinuity for the function $\{u_{n}^{(N)}(t)\}_{N\ge 2L+\ell}$ for $t
\in [0, T_{2}]$. This holds  by noting that we can bound fractions by $0
\le u_{n}^{(N)}(t)
\le 1$ and  derivatives by  $|\dot u_n^{(N)}(t)| \le 2$. When $n = \ell +1$,
we may apply the Arzela-Ascoli theorem to show that there is a subsequence
$N_{k;\ell+1}$  such that $u_{\ell+1}^{(N)}$ converges uniformly on this
subsequence to some $u_{\ell+1}$ as $k \rightarrow \infty$.  A subsequence
of $N_{k;\ell+2}$ of $N_{k;\ell+1}$ can then be found so that  $u_{\ell+2}^{(N)}$
converges uniformly to some $u_{\ell+2}$.  Continuing in this fashion, for
any $j \ge \ell+2$, we may 
take subsequences $N_{k;j}$ of $N_{k;(j-1)}$ so that $u_{j}^{(N)}$ converges
to $u_j$. Then
along the diagonal subsequence $N_k = N_{k;k}$, all $u_n^{(N)}$ converge
uniformly to $u_n$. 

\begin{theorem}
For $j \ge 0$, $\sum_{n = \ell+1}^{N_{k}} i^j u_i^{N_k}(t)$ converges uniformly
to  $ \sum_{n= \ell+1}^\infty i^{j}u_i(t)$ for $t \in [0, T_2]$. In particular,
 $ \sum_{n = \ell+1}^\infty
u_i(t) = 1 -  t$ and  $ \sum_{n = \ell+1}^\infty
iu_i(t) = m_1(0) - \ell t$.    
\end{theorem}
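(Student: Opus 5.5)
The plan is a tail-splitting argument built on the uniform bound (\ref{indubnd}). Fix $j\ge 0$ and a cutoff $K\ge 2L+\ell$. Split
\begin{equation*}
\sum_{i=\ell+1}^{N_k} i^j u_i^{(N_k)}(t) - \sum_{i=\ell+1}^{\infty} i^j u_i(t)
\end{equation*}
into three pieces. The first is the head $\sum_{i=\ell+1}^{K} i^j\bigl(u_i^{(N_k)}(t)-u_i(t)\bigr)$. The second is the truncated-system tail $\sum_{i=K+1}^{N_k} i^j u_i^{(N_k)}(t)$. The third is the limit tail $\sum_{i>K} i^j u_i(t)$.

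The head is a finite sum. Since each $u_i^{(N_k)}\to u_i$ uniformly on $[0,T_2]$ along the diagonal subsequence, the head tends to $0$ uniformly for fixed $K$.

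For the two tails, use (\ref{indubnd}): $i^j u_i^{(N)}(t)\le i^{j-1}A_i\, t^{((i-\ell)/2L)-1}$ for $i\ge 2L+\ell$. Passing to the pointwise limit gives the same bound for $u_i(t)$. I need the tail sums of these bounds to be small uniformly in $t$.

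\textbf{Main obstacle.} The earlier theorem only yields convergence of $\sum A_n t^{((n-\ell)/2L)-1}$ for $t\le T_2$. That handles $j\le 1$, but extra factors $i^{j-1}$ require a little slack. I would use (\ref{abound}) directly: $i^{j-1}A_i t^{((i-\ell)/2L)-1}\le (2L+\ell)^3 i^{j-3}(\gamma t)^{((i-\ell)/2L)-1}$. If needed, I would shrink $T_2$ slightly, to $T_2=1/(2\gamma)$ say, which is harmless for the existence statement. Then the terms are dominated by $C i^{j-3}2^{-(i-\ell)/2L+1}$, a summable sequence independent of $t$ and $N$. The Weierstrass M-test then makes both tails smaller than any $\varepsilon$ uniformly in $t\in[0,T_2]$ and $N_k$ once $K$ is large. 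For $j\le 1$ the original $T_2$ already suffices, since $i^{j-3}\le i^{-2}$ is summable even at $\gamma t=1$.

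Choosing $K$ first and then $k$ large gives the uniform convergence. As a byproduct, each series $\sum i^j u_i(t)$ converges uniformly on $[0,T_2]$, so the limit is a pre-gelation solution.

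Finally, the particular identities follow by taking $j=0,1$ and invoking (\ref{finmom}). The identities $m_0^{(N_k)}(t)=1-t$ and $m_1^{(N_k)}(t)=m_1(0)-\ell t$ are independent of $k$, so their uniform limits are $\sum_{i\ge\ell+1}u_i(t)=1-t$ and $\sum_{i\ge\ell+1}iu_i(t)=m_1(0)-\ell t$. Indices below $2L+\ell$ are covered by the head term.
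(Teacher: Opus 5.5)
Your argument is correct and is essentially the paper's. It uses the same three-way split into a finite head (handled by the diagonal uniform convergence), a truncated tail and a limit tail, controls both tails uniformly in $N$ and $t$ through (\ref{indubnd}) and (\ref{abound}), and reads the identities off (\ref{finmom}); the only difference is that you pass the bound to $u_i$ termwise where the paper invokes Fatou's lemma. Your endpoint caveat fixes a real gap in the paper's proof: the paper's majorant in (\ref{fatou}) diverges at $\gamma t = 1$, and even with the sharper $i^{j-3}$ decay the series is summable at $t=1/\gamma$ only for $j\le 1$, so shrinking $T_2$ (legitimate, since the preceding theorem only asserts that some $T_2$ exists) is what makes the uniform tail bound valid for $j\ge 2$.
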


\begin{proof}
To show moments are finite, we use  Fatou's lemma. From   (\ref{indubnd})
and (\ref{abound}), there is a constant
$C>0$ such that  
\begin{align}
 \sum_{n = \ell+1}^\infty n^j u_n(t) \le \liminf_{N_k\rightarrow \infty}
\sum_{n = \ell+1}^\infty n^j u_n^{(N_k)}(t) \le  C\sum_{n = \ell+1}^\infty
n^j\frac{(\gamma t)^{ (n-\ell+1)/2L -1}}{n+
        1}. \label{fatou}
\end{align}
This sum converges for $t \in [0,T_2]$. For any $k \ge \ell +1$ and $N'<N_k$
we can then bound 
 \begin{align}
        \left|\sum_{i = \ell+1}^{N_k} i^{j}u_i^{(N_k)} - \sum_{i = \ell+1}^\infty
i^{j}u_i\right|
\le \left|\sum_{i = \ell+1}^{N'} (i^{j}u_i^{(N_{k})} - i^{j}u_i)\right|+
\left|\sum_{i
= N'+1}^{N_k} i^{j}u_i^{(N_{k})}\right|+ \left|\sum_{i = N'+1}^\infty
i^{j}u_i \right|.
\end{align}

Let $\varepsilon >0$.   Because of the uniform bound (\ref{sumcbound}) and
(\ref{fatou}), we
can choose a value $N'$ large enough so that
the
second and third terms are each less than $\varepsilon/3$. We can then let
$N_{k}$ be sufficiently
large so that the first term is also bounded by $\varepsilon/3$. This proves
our claim of uniform convergence of moments.  The particular cases of the
zeroth and
first moments follow from  (\ref{finmom}).

\end{proof}

Taking uniform limits of  the right hand side of (\ref{ufineqell}), we find
that $\dot u_i^{(N)}$ must converge uniformly to the right-hand side of (\ref{realmaineqn}),
and that $\dot u_i^{(N)}\rightrightarrows
\dot u_i$. 
This establishes existence of pre-gelation solutions of (\ref{realmaineqn})
for $t \in [0,T_2]$.

(\textit{IV: uniqueness}). To show uniqueness, we  note that the solution
we have found satisfies $m_1(t)
= m(0) - \ell t$.  The equation for  $u_{\ell+1}$ is
closed, and can easily be solved to show
\begin{equation}
u_{\ell+1}(t) = u_{\ell+1}(0) \left(\frac{m_1(t)}{m_1(0)}\right)^{2n/\ell}.
\label{firstsol}
\end{equation}
For $n \ge \ell+1$, we can  use  
$q$-coordinates (\ref{qness}), and  solve $q_n$ in terms of $q_1, \dots,
q_{n-1}$.  The integration factor
can then be expressed in terms of the total mass as
\begin{equation}
        \exp\left(\int \frac{2n-\ell}{m_{1}(t)} dt \right) = m_{1}(t)^{\frac{\ell
-
2n}{\ell}},  
\end{equation} 
and so we obtain
\begin{equation}
u_n(t) = m_1(t)^{\frac{2n}{\ell}}\left(\int_0^t m_{1}(s)^{\frac{-2n}{\ell}-2}
\sum_{i= \ell+1}^{n-1} i(n+ \ell-i)u_j(s) u_{n-j+1}(s)ds+u_n(0)m_1(0)^{-\frac{2n}{\ell}}\right).
\label{mainiter}
\end{equation}

This gives a unique recursive solution formula for $u_n$. Furthermore, 
since $u_{\ell+1}(t)$ is smooth, we can deduce from (\ref{mainiter}) that
each $u_n$ for $n>\ell+1$
is also smooth. The proof for Theorem \ref{largeeuthm} is complete.

\section{Properties of large-cluster systems} \label{sec:mom}

\subsection{Moments}

Having established the uniqueness of pre-gelation solutions, and also that
all moments exist for these solutions, we now give explicit formulas for
computing moments.  

\begin{theorem} \label{momentthm}
For $j \ge 1$, moments for pre-gelation solutions satisfy
\begin{align}
\dot m_k &=\frac1{m_1^2}   \sum_{n,j \ge \ell +1} [(n+j-\ell)^k -2n^{k} ]nju_ju_n
\\ &=- 2\frac{m_{k+1}}{m_1}+ 
\sum_{q+r+s = k}
  {k \choose q,r,s}  \frac{m_{r+1}m_{s+1}}{m_1^2}(-\ell)^{q}. 
\end{align}
\end{theorem}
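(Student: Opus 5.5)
The plan is to mimic the reindexing argument in the proof of Theorem \ref{momentloss}, now applied to the large-cluster system (\ref{realmaineqn}). First, I multiply (\ref{realmaineqn}) by $n^k$ and sum over $n\ge \ell+1$. The loss term gives $-2\sum_n n^{k+1}u_n/m_1 = -2m_{k+1}/m_1$. Since $m_1 = \sum_j ju_j$, this can be written as $-\frac{1}{m_1^2}\sum_{n,j\ge\ell+1} 2n^k\, nj\,u_nu_j$.

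For the gain term, I substitute $i=n$ (first reactant) and $j=n'+\ell-i$ (second reactant). The map $(n',i)$ with $\ell+1\le i\le n'-1$ corresponds bijectively to pairs $(i,j)$ with $i,j\ge\ell+1$ and product size $n'=i+j-\ell$. The gain sum therefore becomes $\frac{1}{m_1^2}\sum_{i,j\ge \ell+1}(i+j-\ell)^k\, ij\, u_iu_j$. Adding the two pieces gives the first displayed identity. Expanding $(i+j-\ell)^k$ by the multinomial theorem, $\sum_{q+r+s=k}\binom{k}{q,r,s}(-\ell)^q i^r j^s$, and factoring the double sum as $\sum_i i^{r+1}u_i\sum_j j^{s+1}u_j = m_{r+1}m_{s+1}$ gives the second form. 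The derivation has the same structure whether the moment index is written $j$ or $k$; I would state it for $k\ge 1$.

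The main obstacle is justifying the interchange of $\frac{d}{dt}$ with the infinite sum, and the rearrangement of the double series. Both reduce to uniform convergence on $[0,T]$. For this I will use the fact, established in Part III of Section \ref{sec:largeeu}, that every moment of the constructed solution converges uniformly on $[0,T_2]$, together with the uniqueness from Part IV. Alternatively, for a general pre-gelation solution, I would assume that $m_{k+1}$ converges uniformly; this is what is needed for the identity to make sense at all.

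With that in hand, the argument runs as follows:
\begin{enumerate}
\item The loss series $\sum_n n^{k+1}u_n/m_1$ is uniformly convergent by comparison, because $m_1(t)=m_1(0)-\ell t$ is bounded below on the interval.
\item The gain series is a rearrangement of nonnegative terms dominated by $\sum_{i,j}(i+j)^k ij u_iu_j \le 2^k\sum(i^{k+1}j+ij^{k+1})u_iu_j = 2^{k+1} m_{k+1}m_1$. By Tonelli and the uniform comparison test, it is therefore uniformly convergent and equal to the reindexed double sum.
\item Consequently $\sum_n n^k\dot u_n$ converges uniformly, so termwise differentiation of $m_k$ is valid.
\end{enumerate}
Finally, the multinomial expansion produces finitely many terms, each a product of convergent moments of order at most $k+1$, so splitting the double sum across them is legitimate.
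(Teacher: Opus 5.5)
Your proposal is correct and follows the same route as the paper: multiply (\ref{realmaineqn}) by $n^k$ and sum, reindex the gain term as a double sum over reactant pairs $i,j\ge \ell+1$ with product size $i+j-\ell$, rewrite the loss term using $m_1=\sum_j j u_j$, and expand $(i+j-\ell)^k$ by the multinomial theorem. The paper does not justify the termwise differentiation; it only appeals to all moments being finite from Part III. Your uniform-convergence argument fills that gap, and you rightly note it needs $m_{k+1}$ (not merely $m_2$) to converge uniformly. One minor caveat: the decay bounds of Part III only deliver this on $[0,T]$ for $T<T_2$, not at the endpoint $T_2$ itself.
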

\begin{proof}

This follows from a direct calculation on (\ref{realmaineqn}), where we multiply
$\dot u_n$
by $n^k$:
\begin{align}
\dot m_k &= \sum_{n \ge \ell +1} n^k \dot u_n =   \sum_{n \ge \ell +1}n^k\left[\sum_{i
= \ell+1} ^{n-1} \frac{i(n+\ell-i)}{m_1^2}u_i u_{n+\ell
- i} - \frac{2nu_n}{m_1^{2}} \right] \\
 &=   \sum_{n \ge \ell +1}\left[\sum_{j
\ge \ell+1}  \frac{(n+j-\ell)^knj}{m_1^2}u_ju_{n} -n^{k} \sum_{j \ge \ell
+1}\frac{2nju_ju_n}{m_1^2}
\right] \\
&=  \frac 1{m_1^2}   \sum_{n,j \ge \ell +1} [(n+j-\ell)^k -2n^{k} ]nju_ju_n.
\end{align}
The factor $(n+j-\ell)^k$ can then be expanded from the multinomial theorem
 to obtain
\begin{align} 
\dot m_k &=  \frac 1{m_1^2}   \sum_{n,j \ge \ell +1}\left(\left[ \sum_{q+r+s
= k}   {k \choose q,r,s} n^{r+1}j^{s+1}(-\ell)^{q}\right]
-2 jn^{k+1}\right)u_ju_n \\
&=- 2\frac{m_{k+1}}{m_1}+ 
\sum_{q+r+s = k}
  {k \choose q,r,s}  \frac{m_{r+1}m_{s+1}}{m_1^2}(-\ell)^{q}.
\end{align}

\end{proof}

Theorem \ref{momentthm} provides a hierarchy of first-order linear equations
that can be used to solve for arbitrary moments. For an example, we give
formulas for the second and third moments under monodisperse $k$-mer
initial conditions 
\begin{equation}
u_n(0) = \delta_{n,k} = \begin{cases}1 & n = k, \\
0 & n \neq k, \\
\end{cases} \label{kmer}
\end{equation}
where $k> \ell$.  The initial $j$th moment is then $m_j(0) = k^j$ and from
(\ref{m1loss}), $m_1(t) = k-\ell t$.
For $m_2$,
we may write 
\begin{equation}
\dot m_2 = \frac{1}{m_1^2}(2m_2^{2}-
4\ell m_2m_1+\ell^2m_{1}^2). \label{m2diff}
\end{equation}
From elementary methods, we find 
 \begin{equation}
m_2(t)  = (k-\ell t)\frac{k^2-2k\ell t+ \ell^2
t}{k -2 k t + \ell t}. \label{m2}
\end{equation}

Calculations for $m_3$, while more cumbersome, are still straightforward.
 We will outline the steps involved in solving for $m_3$.   The equation
for $m_3$ can be written as
\begin{align}
        \dot m_3+ m_3\left(\frac{6\ell}{m_1} - 6\frac{m_2}{m_1^2}\right)
= -6\ell\left(\frac{m_2}{m_1}\right)^2+6\ell^2\left(\frac{m_2}{m_1}\right)-\ell^3.
\end{align}
The integration factor can be expressed as a rational function, with
\begin{align}
        I(t) &= \exp\left(6 \int \frac{\ell}{k-\ell t} - \frac{1}{k-\ell
t}
\frac{k^2-2k\ell t+\ell^2 t}{k-2kt+\ell t}
dt\right) \\
&= (k-\ell t)^{-3}(k-2kt+\ell t)^{3}. 
\end{align}
We may then write an explicit formula for $m_3$ as 
\begin{align}
        &m_3(t) =I(t)^{-1}\left[k^3+ \int_0^t I(s)\left(-6\ell\left(\frac{m_2}{m_1}\right)^2+6\ell^2\left(\frac{m_2}{m_1}\right)-\ell^3\right)ds
\right].
\end{align}
Through standard integration methods, we find
\begin{align}
m_{3}(t)=\frac{k-\ell t}{(k-2kt+\ell t)^3}(\ell^2(\ell - 2k)^3 t^3 + k\ell(2\ell^3
- 6k\ell^2 + k^2\ell + 6k^3) t^2
- k^2\ell(\ell - 2k)(\ell - 4k) t + k^5). \label{m3}
\end{align}

A generating function can also be employed to find moments. If we let $C(z,t)
= \sum_{n = \ell+1}^\infty e^{-nz} c_n(t)$, from a reindexing similar to
Theorem \ref{momentloss} we obtain the partial differential
equation \begin{equation}
\partial_t C = \frac{e^{\ell z}}{m^2_1(t)} (\partial_z C)^2 +\frac{2}{m_{1}(t)}\partial_z
C. \label{genfun}
\end{equation}
By taking derivatives with respect to $z$ and evaluating at $z = 0$, we recover
differential equations for moments.  From the decay bounds  (\ref{abound})
for cluster fractions, all partial derivatives with respect to both $t$ and
$z$ are well defined for $C(z,t)$ for $t \in [0,T_2]$ and for sufficiently
small $z$. Differentiating (\ref{genfun}) with respect to $z$ gives
\begin{equation}
\partial_t C_{z} = \frac{e^{\ell z}}{m^2_1(t)} (2C_zC_{zz}+\ell(C_z)^2)+
\frac{2}{m_{1}(t)}C_{zz}.
\end{equation}
The equation evaluated at $z = 0$ reduces to $\dot m_1 = -\ell$.  Differentiating
again gives
\begin{equation}
\partial_t C_{zz} = \frac{e^{\ell z}}{m^2_1(t)} (2(C_{zz})^2+4\ell C_zC_{zz}+2C_zC_{zzz}+\ell^2(C_{z})^2)+
\frac{2}{m_{1}(t)}
C_{zzz}.
\end{equation}
Now plugging in $z = 0$ gives (\ref{m2diff}). Repeated differentiation of
(\ref{genfun}) can then be used to find equations for higher moments.

\subsection{Polynomial solution form for monomer loss} \label{sec:poly}

For pure dimer initial conditions $u_k(0) = \delta_{2,k}$ and $\ell = 1$,
it follows that
$m_1(t) = 2-t$  and 
from (\ref{firstsol}), 
\begin{equation}
u_2(t) = \frac 1{16}m_{1}(t)^4. \label{u2soln}
\end{equation}
To find $u_3$, we can use (\ref{mainiter}) to obtain
\begin{align}
u_3(t) = m(t)^6 \left(\int_0^t 4m^{-8} u_2^2ds\right) 
 = \frac{m_{1}^6}{2^5}-\frac{m_{1}^7}{2^6} . \label{u3soln}
\end{align}
We can continue using (\ref{mainiter}) to solve for fractions of larger clusters,
and observe that solutions appear to be polynomials in $m_1$.  In fact, polynomial
solutions arise  for all cases when $\ell = 1$ with no initial monomers and
a positive fraction of dimers.

\begin{theorem} \label{polythm}

Let $u_n$ be the pre-gelation large-cluster solution for $\ell = 1$ with
bounded initial conditions satisfying $u_2(0) >0$. For $n\ge 2$, cluster
fractions for large-cluster solutions take the form 
\begin{equation}
        u_n = \sum_{j =2n}^{3n-2}a_{j}^{(n)}m^j_1
 \label{seriesform}
\end{equation}
for real constants  $a_{j}^{(n)}$.  Furthermore, $a_{2n}^{(n)}$ and $(-1)^na_{3n-2}^{(n)}$
are both positive.
\end{theorem}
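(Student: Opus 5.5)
The plan is a strong induction on $n\ge 2$ using the recursive formula (\ref{mainiter}) with $\ell=1$. We treat $m=m_1(t)$ as the independent variable. Since $\dot m_1=-1$ for pre-gelation solutions (Theorem \ref{momentloss}), the substitution $s\mapsto m=m_1(s)$ gives $ds=-dm$. Write $m_0:=m_1(0)$. Then (\ref{mainiter}) becomes
\begin{equation*}
u_n(m)=m^{2n}\left(\int_m^{m_0}\mu^{-2n-2}\sum_{i=2}^{n-1} i(n+1-i)\,u_i(\mu)u_{n+1-i}(\mu)\,d\mu+u_n(0)\,m_0^{-2n}\right).
\end{equation*}
For the base case $n=2$ the sum is empty, and (\ref{firstsol}) gives $u_2=u_2(0)m_0^{-4}m^4$. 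This is of the form (\ref{seriesform}) with the single coefficient $a_4^{(2)}=u_2(0)m_0^{-4}>0$, and $(-1)^2a_4^{(2)}>0$.

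\textbf{Polynomial form.} For the inductive step, assume (\ref{seriesform}) holds for all $2\le i\le n-1$. For $2\le i\le n-1$, the product $u_iu_{n+1-i}$ is a polynomial in $m$ with degrees between $2i+2(n+1-i)=2n+2$ and $(3i-2)+(3(n+1-i)-2)=3n-1$. After multiplying by $\mu^{-2n-2}$, the integrand is a genuine polynomial with degrees $0,\dots,n-3$. In particular no $\mu^{-1}$ term appears, so no logarithms arise.

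Let $F$ be its antiderivative with $F(0)=0$, so $F$ has degrees $1,\dots,n-2$. Then
\begin{equation*}
u_n=m^{2n}\bigl(F(m_0)+u_n(0)m_0^{-2n}\bigr)-m^{2n}F(m).
\end{equation*}
The first term contributes only to the coefficient of $m^{2n}$, and the second contributes only to degrees $2n+1,\dots,3n-2$. This gives (\ref{seriesform}), together with
\begin{equation*}
a_{2n}^{(n)}=F(m_0)+u_n(0)m_0^{-2n},\qquad a_{3n-2}^{(n)}=-\frac{c_{n-3}}{n-2},
\end{equation*}
where $c_{n-3}$ is the top coefficient of the integrand.

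\textbf{Sign of $a_{3n-2}^{(n)}$.} The top coefficient is
\begin{equation*}
c_{n-3}=\sum_{i=2}^{n-1} i(n+1-i)\,a^{(i)}_{3i-2}\,a^{(n+1-i)}_{3(n+1-i)-2}.
\end{equation*}
By hypothesis each summand has sign $(-1)^i(-1)^{n+1-i}=(-1)^{n+1}$, and the sum is nonempty for $n\ge3$. Hence $c_{n-3}$ has sign $(-1)^{n+1}$, and $(-1)^na_{3n-2}^{(n)}>0$.

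\textbf{Positivity of $a_{2n}^{(n)}$ (main obstacle).} Here $F(m_0)=\int_0^{m_0}(\text{integrand})\,d\mu$ involves the polynomials $u_i$ on all of $(0,m_0]$. This extends past the time interval $[0,T_2]$ on which they are known to equal the solution and hence to be nonnegative. I would strengthen the induction hypothesis to include
\begin{equation*}
u_i(m)\ge 0\quad\text{for all } m\in(0,m_0],
\end{equation*}
where $u_i$ now denotes the polynomial. This propagates through the displayed integral formula, which is an identity between polynomials: if all lower $u_i$ are nonnegative on $(0,m_0]$, the integrand is nonnegative there, so $u_n(m)\ge0$ for every $m\in(0,m_0]$. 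The base case holds since $u_2=a_4^{(2)}m^4$.

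Then $F(m_0)\ge0$, and strict positivity follows from the behaviour of the integrand near $\mu=0$. Its constant term is
\begin{equation*}
c_0=\sum_{i=2}^{n-1} i(n+1-i)\,a^{(i)}_{2i}\,a^{(n+1-i)}_{2(n+1-i)},
\end{equation*}
which is positive by hypothesis. So the integrand is strictly positive on a neighbourhood of $0$ and nonnegative elsewhere, giving $F(m_0)>0$. Since $u_n(0)\ge0$, we conclude $a_{2n}^{(n)}>0$, which completes the induction.
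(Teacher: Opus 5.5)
Your proof is correct. For the polynomial form and the sign of $a_{3n-2}^{(n)}$ it is the paper's argument in different notation. The paper iterates (\ref{mainiter}) through the maps $T_1^{(n)}$ and $T_2^{(n)}$ acting on monomials in $m_1$. It tracks the lowest and highest exponents $\beta_n=2n$ and $\alpha_n=3n-2$, and it reads off the sign of the top coefficient as $-\frac{1}{n-2}$ times a sum of products of lower top coefficients. That is exactly what you do with your antiderivative $F$.

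Where you genuinely go beyond the paper is $a_{2n}^{(n)}>0$. The paper only asserts that the coefficient of $m_1^{2n}$ is positive. By (\ref{t2full}) that coefficient is
\[
u_n(0)m_1(0)^{-2n}+\sum_p c_p\, \frac{m_1(0)^{p-2n-1}}{p-2n-1},
\]
where the $c_p$ are the coefficients of $T_1^{(n)}$ applied to the lower $u_i$. These have mixed signs: already for pure dimer data, $T_1^{(4)}[u_2,u_3]=\frac{3}{128}m_1^{10}-\frac{3}{256}m_1^{11}$. So no termwise argument works.

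As you observe, this coefficient equals $u_n(0)m_1(0)^{-2n}$ plus the integral of the integrand over $(0,m_1(0)]$. That integral depends on the polynomials on the whole of this range, far outside the short interval where they are known to coincide with the nonnegative solution. Your strengthened hypothesis, that the polynomial extensions are nonnegative on $(0,m_1(0)]$, propagates cleanly through the integral identity. Combined with the positive constant term $c_0$ of the integrand, it gives strict positivity. So your proposal supplies a justification for the step the paper leaves as a bare claim.
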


\begin{proof}
Let $\mathcal P$ be the space of polynomials in $m_1(s) = m_{1}(0)-s$ (equivalent
to the space of polynomials in $s$). From the solution formula (\ref{mainiter}),
for $n\ge 3$ we can obtain
$u_{n}$ from $(u_2, \dots,
u_{n-1})$ through a composition of two maps:

\begin{enumerate}
\item $T_1^{(n)}: \mathcal P ^{n-2} \rightarrow \mathcal P$ is defined as
        \begin{equation}
        (f_2, \dots, f_{n-1})  \mapsto 
 \sum_{j= 2}^{n-1} j(n-j+1)f_j
f_{n-j+1}. \label{t1}
        \end{equation}
        This map is  the quantity in the integral of (\ref{mainiter}) before
multiplication by $m_1^{2-n}$. Coefficients can be found through standard
polynomial
multiplication, and so the range is also a polynomial in $m_1$. 
\item The remaining operations in (\ref{mainiter}) are given by $T_2^{(n)}$,
which is the composition $T_2^{(n)}:T_{2;1}^{(n)}
\circ T_{2;2}^{(n)}
\circ T_{2;3}^{(n)}
$ of linear maps. It is sufficient to describe each of their actions on the
monomial $m_1^p$ and then extend to $f \in \mathcal P$ by linearity: 
\begin{itemize}
\item $T_{2;1}^{(n)}$  is a multiplication operator defined by 
\begin{equation}
T_{2;1}^{(n)}[m^{p}_1] = m_1^{p-2n-2} .
\end{equation}  
\item $T_{2;2}^{(n)}$ is an integration operator defined  by 
\begin{equation}
 T_{2;2}^{(n)}[m^{p}_1] =   \frac{1}{p+1}(m_1(0)^{p+1}- m^{p+1}_1 )  \quad
\hbox{ for }  p \neq -1.
 \end{equation}
 \item Finally, $T_{2;3}^{(n)}$ is  defined by 
\begin{equation}
T_{2;3}^{(n)}[m_{1}^p] =   m_1^{2n}(m_{1}^p+u_n(0)m_1(0)^{-2n}). 
\end{equation}
\end{itemize}
\end{enumerate}
The composition of these maps is then \begin{equation}
T_2^{(n)}[m^p_1] =\left(u_n(0)m_{1}(0)^{-2n}+\frac{m_{1}(0)^{p-2n-1}}{p-2n-1}
\right)m^{2n}_1-\frac{1}{p-2n-1} m_1^{p-1}, \qquad \hbox{for } p \neq 2n+1.
\label{t2full}  \end{equation}

We now show that  $T^{(n)} = T_2^{(n)} \circ T_1^{(n)}: \mathcal P \rightarrow
\mathcal P$, meaning that $u_n \in \mathcal P$ for $n \ge 2$. This is achieved
by  induction.  Our claim that is that $u_n \in \mathcal P$ and that the
lowest power of $m_1^p$ in $u_n$ is $\beta_n = 2n$. From (\ref{firstsol}),
this is certainly true for $u_2$.  
Under the induction hypothesis, the lowest exponent of $T_1^{(n)}[u_n]$ is
\begin{equation}
\min_{2\le j\le n-1} \{\beta_{n-j+1}+\beta_j
 \} = 2n+2.
\end{equation}
Since this power is not $2n+1$, the map (\ref{t2full}) for $T^{(n)}_2$ is
well-defined, and so $T^{(n)} : \mathcal P \rightarrow \mathcal P$. 

The terms found from the $T_2$ are $2n$ and powers produced in $T_1$ reduced
by one.  It then follows that
\begin{align}
        \beta_n &  =   \min\{\min_{2\le j\le n-1} \{\beta_{n-j+1}+\beta_j-1
 \}, 2n\}  =   \min\{2n+2, 2n\}   = 2n.
\end{align}
The coefficient in front of the $m_{2n}$ is positive for all values of $n$,
and so it follows that $a_{2n}^{(n)}>0$.

 Denoting the largest exponent of $m_1^p$ in $u_n$ as  $\alpha_n$, we observe
from (\ref{u2soln}) and (\ref{u3soln}) that $\alpha_2 = 4$ and $\alpha_3
= 7$. Another simple induction argument shows
\begin{align}
        \alpha_n &=     \max\{ \max_{2\le j\le n-1} \{\alpha_{n-j+1}+\alpha_j-1
 \} , 2n\} = 3n-2. \label{alpha}
\end{align}
From an induction hypothesis that $a_{3i-2}^{(i)}(-1)^i>0$ for $2 \le i \le
n-1$, for $n\ge 3$ we see that the coefficient in front of $m^{3n-2}$ has
sign\begin{equation}
 \mathrm{sgn}(a_{3n-2}^{(n)}) =  \mathrm{sgn} \Big[ -\frac{1}{n-2} \sum_{i
= 2}^{n-1} i(n-i+1) a^{(i)}_{3i-2}a^{(n-i+1)}_{3(n-i+1)-2} \Big] = \mathrm{sgn}(-(-1)^{i}(-1)^{n-i+1})
= (-1)^n. \end{equation} 
\end{proof}

If we allow for a positive fraction of monomers, we should not expect polynomial
solutions in general. For a counterexample, we will still consider an emission
size of  $\ell = 1$, but now we impose initial conditions $u_1(0) = u_2(0)
= 1/2$,
and $u_n(0) = 0$ for $n \ge 3$.  Then $m_1(t) = 3/2-t$.   The closed equation
for monomers is
\begin{equation}
\dot u_1 = \frac{u_1^2}{m_1^2}-2\frac{u_1}{m_1}.
\end{equation}
In $q$-coordinates, the equation is separable, with\begin{equation}
m_1\dot q_1 = q_{1}^2-q_1 
\end{equation}
and solution\begin{equation}
u_1 =   \frac{m_1^2}{(m_{1}+3)}.
\end{equation}
The equation for $u_2$ is
\begin{equation}
\dot u_2 = \frac{4u_1u_2}{m_1^2} - 4\frac{u_2}{m_1}.
\end{equation}
Substituting the solution for $u_1$, the equation is solvable without appealing
to $q$-coordinates, with  
\begin{equation}
\frac{du_2}{u_2} = 4 \left(\frac 1{m_1+3}- \frac 1{m_1} \right)dt \quad \Rightarrow
\quad  u_2 = \frac{3^4}{2}\left(\frac{m_1}{m_1+3}\right)^4. 
\end{equation}

Finally, the equation for  $u_3$ is
\begin{equation}
\dot u_3 = \frac{4 u_2^2+ 6u_1u_3}{m_1^2}- \frac{6u_3}{m_1}.
\end{equation}
We use $q$-coordinates again to obtain\begin{equation}
m_1\dot q_3 = -5 q_3+4q_{2}^2+6q_1q_3.
\end{equation}
We solve through an integration factor to obtain\begin{equation}
q_3(t) = \frac{1}{I(t)} \left(\int_0^t \frac{4 q_2^2}{m_1} I(s)ds\right),
\label{u3ref}
\end{equation}
where\begin{equation}
I(t) = \exp\left(\int \frac{5-6q_1}{m_1} dt\right) = \frac{(m_{1}+3)^6}{m_{1}^5}.
\end{equation}
Trimers are then found to have a fraction
of\begin{equation}
u_3(t) =\frac{3^{8} m_1^6(3-2m_1)}{9(m_1+3)^7}.
\end{equation}
Solutions appear, then, to be rational functions of $m_1$, although we have
not been able to prove anything rigorous in this direction.
We also note that we should not expect polynomial solutions if we increase
the emission size, even for simple initial conditions.  For a  counterexample,
we can take $\ell = 3$ with $u_n(0) = \delta_{n,4}$. From (\ref{firstsol}),
size-4 clusters have fraction $u_4 = (m_1/4)^{8/3}$.

\section{Simulations and gelation behavior} \label{sec:sims}

\begin{figure}[htbp]
    \centering
    \begin{subfigure}[b]{0.45\textwidth}
        \includegraphics[width=\textwidth]{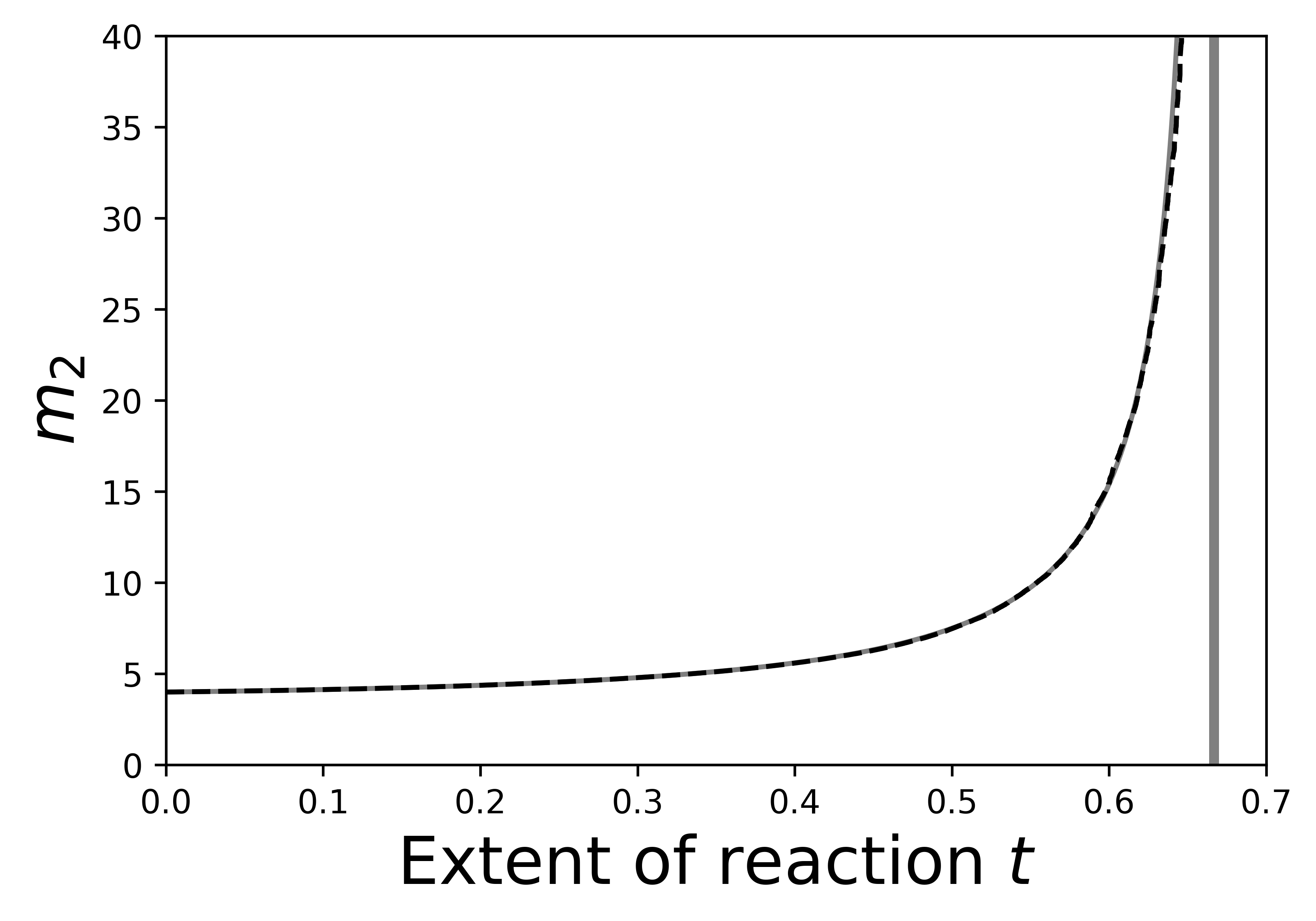}
    \end{subfigure}
    \hfill
    \begin{subfigure}[b]{0.45\textwidth}
        \includegraphics[width=\textwidth]{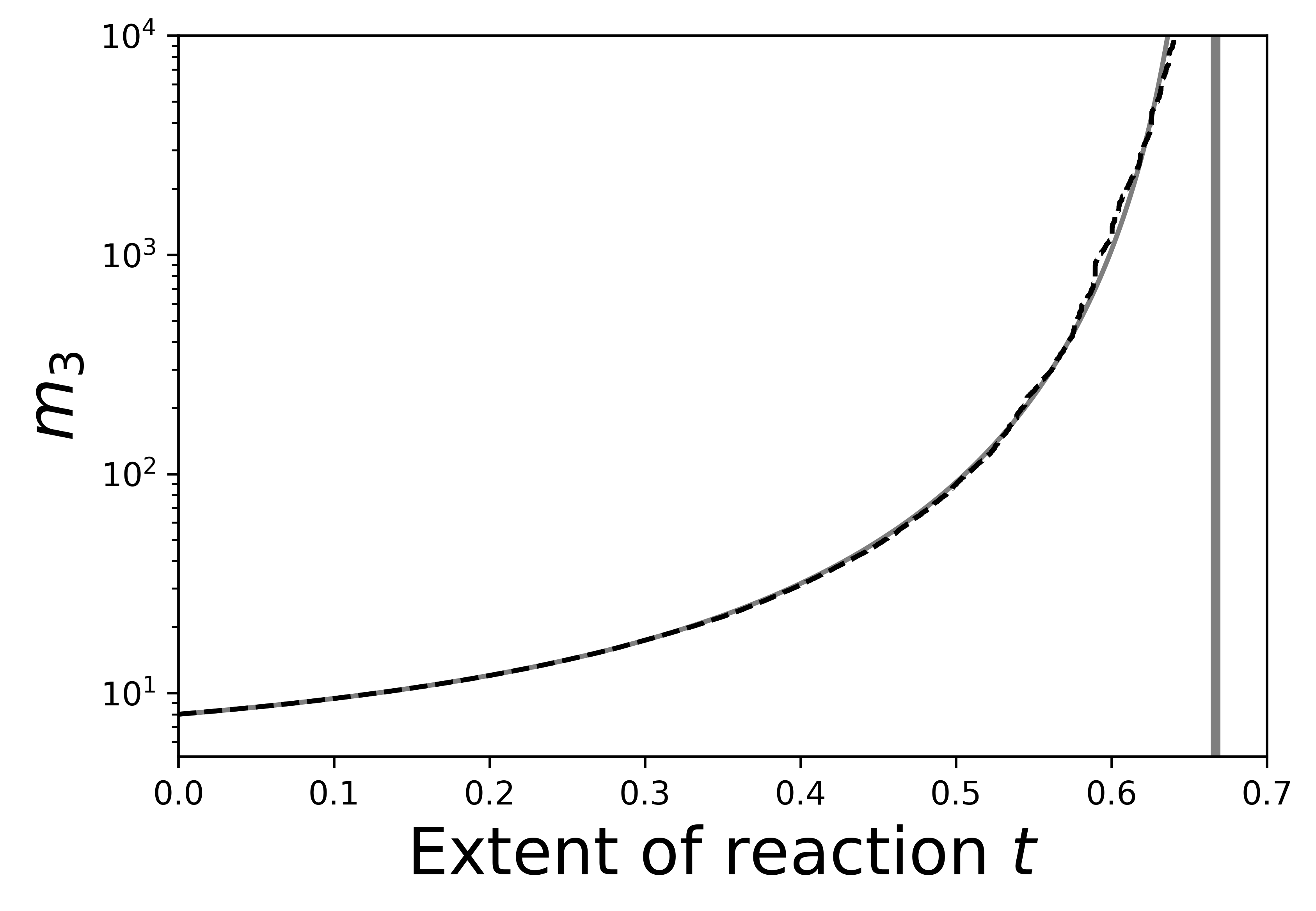}
    \end{subfigure}
    \caption{\textbf{Second and third moments}.
Limiting formulas of $m_i(t) $ (transparent solid lines) and prelimit moments
$m^{(N)}_i(t)$  with $N = 10^6$ (dashed lines) for $i = 2$
(left) and $i = 3$ (right, with log scaling in the $m_3$-axis). The grey
thick vertical line is at $t_{\mathrm{gel}}
= 2/3$.}\label{momentpic} 
\end{figure}

In this section, we present results from simulations of the Markov process
$u^{(N)}(t)$ for $N = 10^6$ clusters, with pure dimer initial conditions
$u^{(N)}_n(0) = N\delta_{n,2}$, and $\ell = 1$.  The cluster fractions $u^{(N)}(t)$
 are compared to limiting cluster fraction formulas $u_n$ generated by (\ref{mainiter})
with initial conditions $u_n = \delta_{2,n}$. The maps for $T_1^{(n)}$ and
$T_2^{(n)}$ described in (\ref{t1}) and (\ref{t2full})
for Theorem (\ref{polythm}) provide an iterative algorithm for generating
coefficients
in (\ref{seriesform}).  Formulas for 
$u_2$ and $u_3$ are given in (\ref{u2soln}) and (\ref{u3soln}).  With coefficients
rounded to the  nearest millionth, we generate \begin{align*}
u_4 &= 0.005859t^{10} - 0.023438t^{9} + 0.023438t^{8}, \\
u_5 &=-0.002686t^{13} + 0.016113t^{12} - 0.032227t^{11} + 0.021484t^{10},
\\
u_6 &=0.001389t^{16} - 0.011108t^{15} + 0.033325t^{14} - 0.044434 + 0.022217t^{12},
 \\
u_7 &= -0.000778t^{19} + 0.007782t^{18} - 0.031128t^{17} + 0.062256t^{16}
- 0.062256t^{15} + 0.024902t^{14}, \\
u_8 &= 0.000462t^{22} - 0.005545t^{21} + 0.027723t^{20} - 0.073929t^{19}
+ 0.110893t^{18} - 0.088715t^{17} + 0.029572t^{16},  \\
u_9 &=  -0.000287t^{25} + 0.004011t^{24} - 0.024068t^{23} + 0.080225t^{22}
- 0.160451t^{21} + 0.192541t^{20}\\ &- 0.128361t^{19} + 0.036674t^{18}, \\
u_{10} &=  0.000184t^{28} - 0.002941t^{27} + 0.020584t^{26} - 0.082337t^{25}
+ 0.205842t^{24} - 0.329347t^{23}\\ &+ 0.329347t^{22} - 0.188198t^{21} +
0.047050t^{20}.
\end{align*}

The corresponding prelimit moments $m^{(N)}_2$ and  $m^{(N)}_3$ are compared
to limiting moments 
\begin{equation}
m_2(t) = \frac{(2-t)(4-3t)}{2-3t}, \qquad m_3(t) = \frac{(2-t)(-27t^3+84t^2-84t+32)}{(2-3t)^3},
\label{m23}
\end{equation}
where we have substituted $k = 2$ and $\ell = 1$ in formula
(\ref{m2}),(\ref{m3}).
We stress that we
have only established short time existence and uniqueness in  Theorem \ref{largeeuthm}.
 However, we show that limiting quantities appear to agree with numerical
simulations until a (conjectured) gelation time of $t_{\mathrm{gel}} = 2/3$
where  $m_2(t_{\mathrm{gel}}^-) = \infty$ in (\ref{m23}).

\begin{figure}
\includegraphics[width=.5\textwidth]{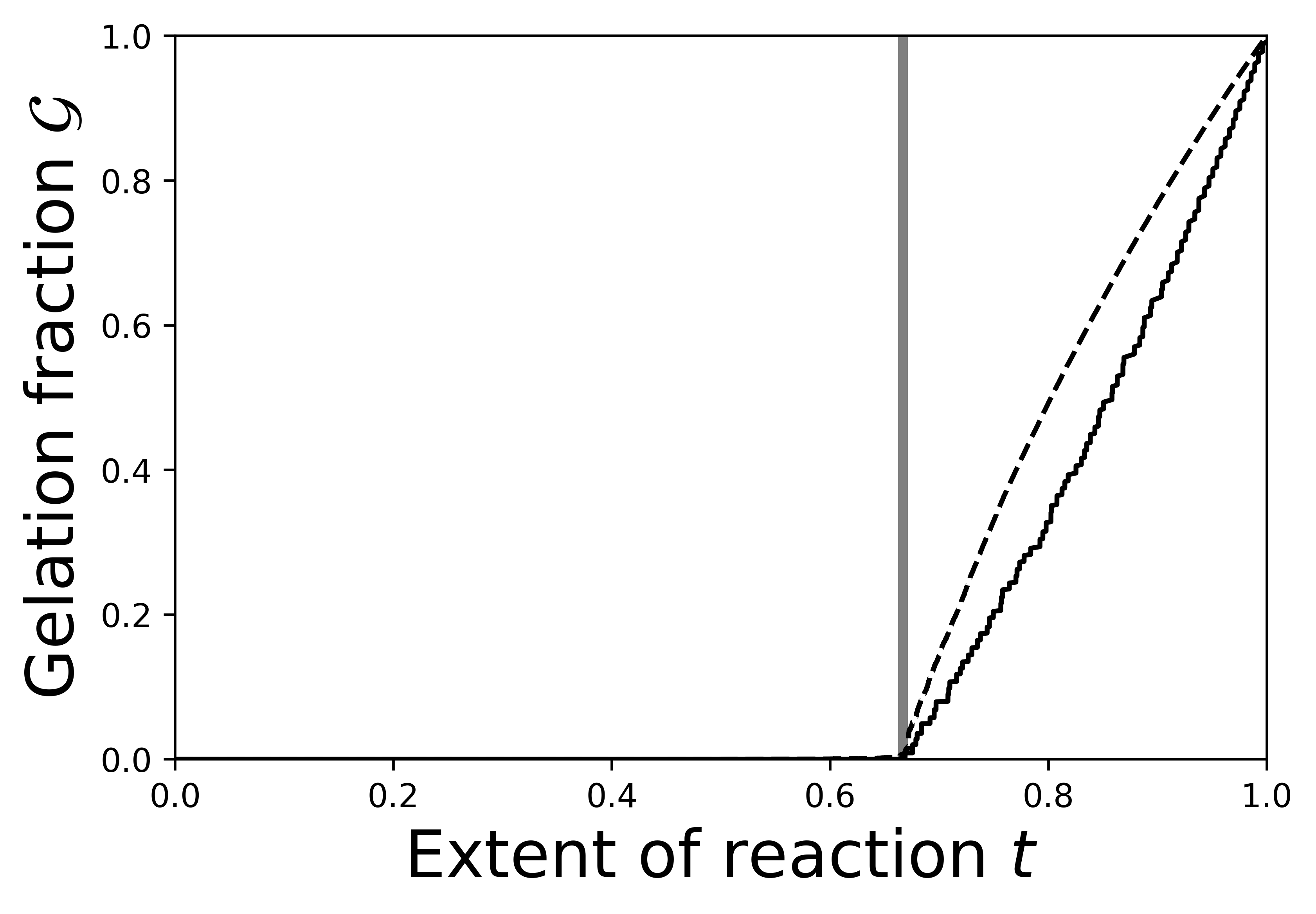}
\caption{\textbf{Gelation fractions}.  Gelation fractions $\mathcal G^{N}(t)$
for the Ziff-Stell 
(dashed line) and Stockmayer (solid line) models. The grey thick vertical
line is at $t_{\mathrm{gel}}
= 2/3$.} \label{gelpic}
\end{figure}

Under general initial conditions,  the iterative equation (\ref{mainiter})
 used for generating explicit formulas for $u_n$ holds when $m_1(t) = m_{1}(0)-\ell
t$. We showed in Theorem \ref{momentloss} that this is true when $m_2<\infty$.
 We should, however, expect $\dot m <-\ell$ past the gelation time $t_{\mathrm{gel}}
= \min \{t \in [0,t_{\mathrm{ex}}):m_2(t^{-}) = \infty\}$, if such a time
exists. Formula (\ref{m2}) suggests that  under monodisperse $k$-mer initial
conditions (\ref{kmer}),   $t_{\mathrm{gel}} = k/(2k-\ell)$. The
exhaustion time, in which all particles are removed from the
system, is $t_{\mathrm{ex}} = k/\ell$.  
If $k>\ell$, then $t_{\mathrm{gel}}<t_{\mathrm{ex}}$ and so we should expect
gelation to occur before the system exhausts itself. In Fig. \ref{momentpic}
we plot $m^{(N)}_2$ and  $m^{(N)}_3$ against $m_2$ and  $m_3$. We find good
agreement for both moments until values near $t_{\mathrm{gel}} = 2/3$.

For an estimate for the relative mass of the gel compared to total mass after
$t_{\mathrm{gel}}$, we consider the numerical gel fraction $\mathcal G^{(N)}(t)$
of particles belonging
to the largest cluster, or 
\begin{equation}
\mathcal G^{(N)}(t) = \frac{\mathrm{argmax}\{i:U_i^{(N)}(t) \neq 0\}}{\sum_n
n U_n^{(N)}(t)}. \label{gelfrac}
\end{equation}
Several models exist for how the gel interacts with finite sized clusters
(the sol).  The simplest model for finding the gelation fraction  is similar
to a model of Ziff and Stell \cite{ziff1980kinetics}, where the gel is permitted
to interact with the sol.  In terms of the sampling procedure, we can obtain
$\mathcal G^{(N)}(t)$ through simulating (\ref{cadlag}) and recording the
largest cluster after each time step with no further intervention.     Since
we assume sampling without replacement in the Markov model, the gel cannot
be selected twice to react with itself, an interaction found the popular
cross-linking polymer models of Flory \cite{flory1953principles}.   We can
also consider the model of Stockmayer  \cite{stockmayer1943theory},
in which only the sol is only  allowed to interact with itself. For simulating
this model, we restrict clusters to interact when they exceed a certain threshold
size.   See Fig. \ref{gelpic} for the gelation fraction $\mathcal G^{(N)}(t)$
under both the Ziff-Stell and Stockmayer models.  In the Stockmayer model,
we used a minimal size threshold of $N/100 = 10^4$ for clusters which are
not allowed to interact.  For both models, we find that a `knee' occurs 
approximately at $t_{\mathrm{gel}} = 2/3$.  

In Fig.\ \ref{fracpic} we plot cluster fractions for $u_i$ for $i = 2, \dots,
6$. We find good agreement between the cluster fraction formulas and  the
 Markov process (\ref{cadlag}) until $t_{\mathrm{gel}} = 2/3$.  When $t>
t_{\mathrm{gel}} $,  we find that $u^{(N)}_n(t) < u_n(t)$.  This is due to
the contribution of  particles toward  the gel, which is not reflected in
the  pre-gelation formula (\ref{mainiter}) used for calculating $u_n$.

\begin{figure}[htbp]
    \centering
    \begin{subfigure}[b]{0.45\textwidth}
        \includegraphics[width=\textwidth]{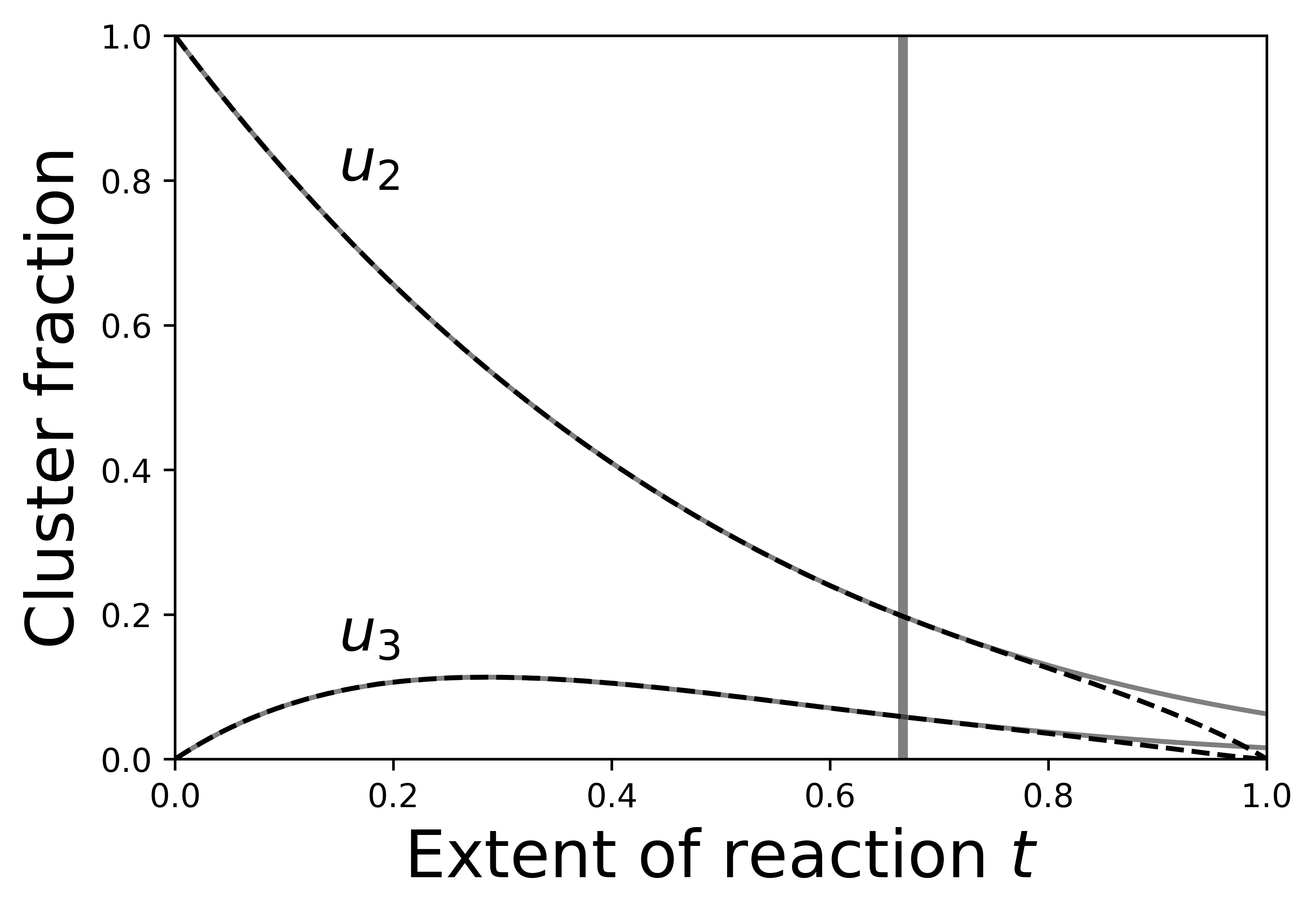}
    \end{subfigure}
    \hfill
    \begin{subfigure}[b]{0.45\textwidth}
        \includegraphics[width=\textwidth]{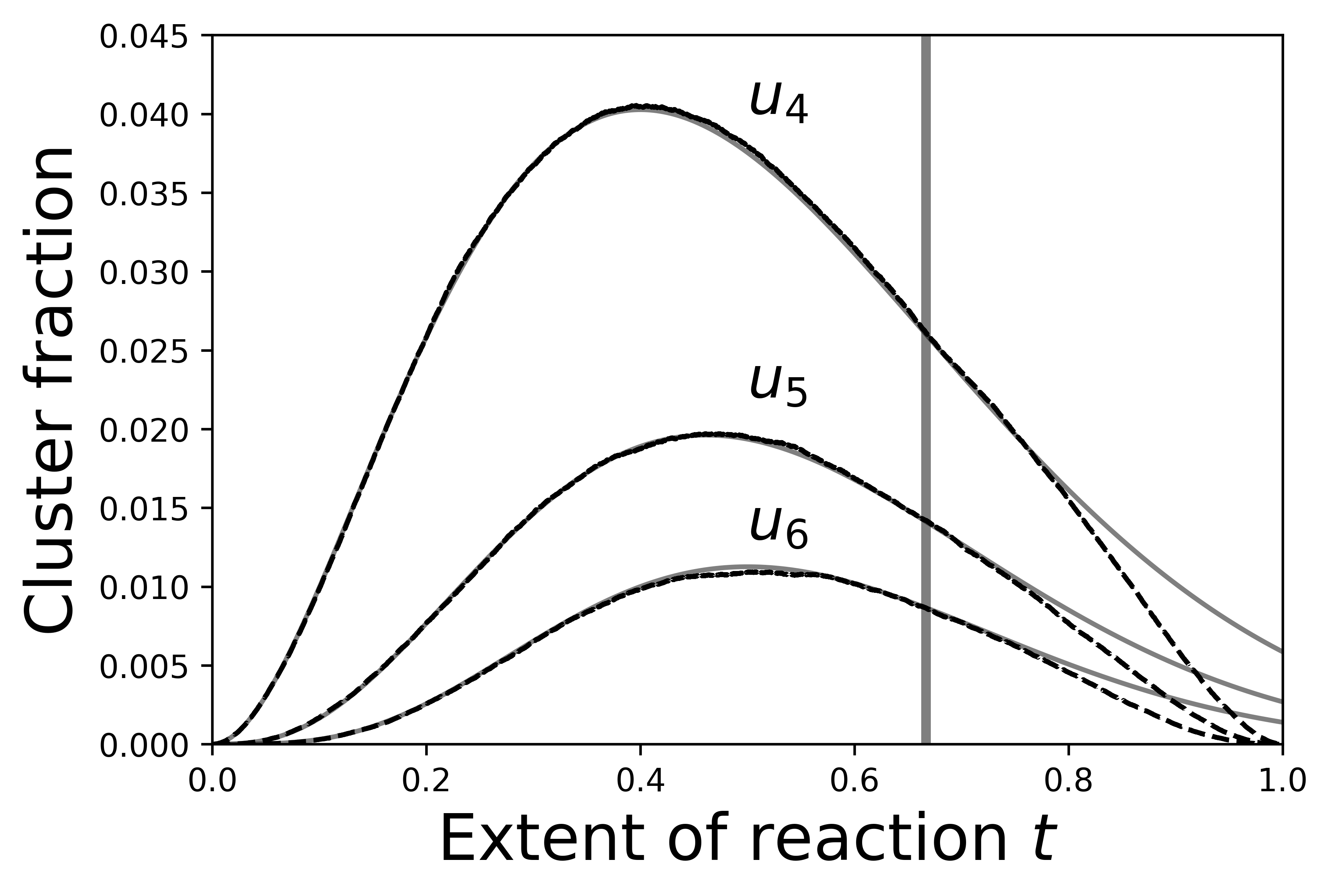}
    \end{subfigure}
    \caption{\textbf{Cluster fractions}.
Limiting formulas $u_n$ (transparent solid lines) and 
$u_n^{(N)}$  for $N = 10^6$ (dashed lines) for $n = 2,3$
(left) and $n = 4,5,6$ (right). The grey thick vertical line is at $t_{\mathrm{gel}}
= 2/3$.}\label{fracpic} 
\end{figure}

\section{Conclusion} \label{sec:disc}

We have studied a model of coagulating particles with mass loss through particle
emission.
Small-cluster solutions were shown to exist on a maximal interval of existence,
in the sense that at exhaustion time particles are no longer available for
reaction (\ref{mainreact}) to occur.   While large-cluster solutions were
only established for a short time, numerical evidence suggests that the iterative
formula for cluster fractions and the moment hierarchy  extends until the
gelation of the system. For both small and large-cluster solutions, we have
relied on the concept of reaction classes which provide us a simple link
between small time cluster growth rates and the required number of reactions
needed to form a cluster.   

Existence methods have, to a large extent,  relied on an analysis of the
generating function for cluster sizes, given as a solution of  a nonlinear
partial differential equation. The generating function is often simple enough
to  solve along characteristics, and in certain cases Lagrange inversion
can  be employed to find explicit formulas for cluster frequencies \cite{wattis2004coagulation,menon2004approach}.
For our model, we found that finding an explicit solution of the generating
function   (\ref{genfun}) was intractable, but we hope in future work to
use singularity analysis  to find asymptotic behavior of cluster fractions
in the post-gelation regime. As demonstrated by Ziff and
Stell  for polymer models \cite{ziff1980kinetics}, the loss  term in (\ref{themainode}),
and subsequently its corresponding generating function, can change depending
  on if and how the gel interacts with the sol.    

  \vspace{10pt}

\textbf{Funding}: The work of all authors is partially supported  by the National Science Foundation under Grant No. 2316289. 

\textbf{Conflicts of interest/Competing interests}:  The authors have no competing interests to declare that are relevant to the content of this article.

\bibliography{coag_7_27_25}
%% if required, the content of .bbl file can be included here once bbl is generated
%%\input sn-article.bbl

\end{document}